\title[Kleinian orbifolds uniformized by $\mathcal{RP}$ groups]
{Kleinian orbifolds uniformized by $\mathcal{RP}$ groups with
an elliptic and a hyperbolic generator}
\author[E Klimenko]{Elena Klimenko}
\address{Max Planck Institute for Mathematics\\\newline
Vivatsgasse 7\\53111 Bonn\\Germany}
\email{klimenko@mpim-bonn.mpg.de}
\urladdr{}
\author[N Kopteva]{Natalia Kopteva}
\address{Sobolev Institute of Mathematics \\\newline
Acad. Koptyug ave., 4\\Novosibirsk 630090\\Russia}
\email{natasha@math.nsc.ru}
\urladdr{}
\def\cnewtheorem#1[#2]#3{\newtheorem{#1}{#3}[section]
\expandafter\let\csname c@#1\endcsname\c@theorem}
\let\xysavmatrix\xymatrix
\def\xymatrix{\disablesubscriptcorrection\xysavmatrix}
\newtheorem{theorem}{Theorem}[section]
\theoremstyle{remark}
\def\tr{\mathrm{tr}\,}
\def\PSL{\mathrm{PSL}(2,{\mathbb C})}
\def\P{\mathcal{P}}
\def\Q{\mathcal{Q}}
\begin{document}

\begin{asciiabstract}
We consider non-elementary Kleinian groups \Gamma without invariant plane
generated by an elliptic and a hyperbolic elements with their axes lying 
in one plane.  We find presentations and a complete list of orbifolds
uniformized by such \Gamma.
\end{asciiabstract}

\begin{htmlabstract}
We consider non-elementary Kleinian groups &Gamma; without invariant plane
generated by an elliptic and a hyperbolic elements with their axes lying
in one plane.  We find presentations and a complete list of orbifolds
uniformized by such &Gamma;.
\end{htmlabstract}

\begin{abstract}    
We consider non-elementary Kleinian groups $\Gamma$, without invariant plane,
generated by an elliptic and a hyperbolic element, with their axes lying 
in one plane.  We find presentations and a complete list of orbifolds
uniformized by such $\Gamma$.
\end{abstract}

\maketitle

This work is a part of a program to describe all
2--generator
Kleinian groups with real parameters. 
We study $\mathcal{RP}$
groups,
that is, marked 2--generator subgroups 
$\Gamma=\langle f,g\rangle$
of  $\PSL$  for which the generating pair $(f,g)$ has real 
parameters 
$\beta=\beta(f)=\tr^2f-4$, $\beta'=\beta(g)=\tr^2g-4$
and $\gamma=\gamma(f,g)=\tr[f,g]-2$ (see \fullref{sec:prelim}
 for exact definitions).
Since discreteness questions were answered for elementary groups
and for
groups with invariant hyperbolic plane (in particular, all
Fuchsian
groups were described), we concentrate only on the non-elementary
$\mathcal{RP}$ groups without invariant plane, which we call
{\em truly spatial\/} $\mathcal{RP}$ groups.

This paper deals with the most complicated case of $\mathcal{RP}$ groups, the case
with one generator elliptic and
the other one hyperbolic.
It was shown by Klimenko and Kopteva \cite{KK02} that
`truly spatial' for this class means that
the elliptic generator is not a half-turn and the axes
of the generators either (1) are disjoint (non-parallel) 
lines lying in a
hyperbolic plane
or (2) intersect non-orthogonally at a point of ${\mathbb H}^3$.
In terms of parameters, we have here $\beta \in (-4,0)$,
$\beta' \in (0,\infty)$ and $\gamma$ for (1) and (2) belongs to
the intervals
 $(-\infty, 0)$ and $(0,-\beta\beta'/4)$, respectively
\cite[Theorem 1 and Table 1]{KK02}.
In the previous papers by Klimenko \cite{Kli90}
and Klimenko and Kopteva \cite{KK02,KK04}
necessary and sufficient conditions for
discreteness of all such groups were found constructively. Here
we use
the construction (we reproduce it in \fullref{sec:fp})
to determine fundamental polyhedra, presentations
and orbifolds for all truly spatial discrete $\mathcal{RP}$ groups
with an elliptic and a hyperbolic generators
(\fullref{sec:orbifolds}).

The other cases of $f$ and $g$ with real traces that generate a
truly spatial
$\mathcal{RP}$ group and the question when the group is discrete
were
investigated earlier by Klimenko \cite{Kli89,Kli01}
and Klimenko and Kopteva \cite{KK04-1}.
The final results including the results of the present paper are
collected in 
Klimenko and Kopteva \cite{KK05-2} (mostly without proofs),
where parameters, presentations and orbifolds for
{\em all truly spatial\/} discrete $\mathcal{RP}$ groups 
{\em with real traces of the generators\/} are given.

\subsection*{Acknowledgements}
N Kopteva would like to thank Gettysburg College
for sincere hospitality during her stay in October 2004 when
an essential part of the work was done. Both authors are grateful
to Prof F Grunewald and 
an anonymous referee for various comments on an earlier version
of this paper.
E Klimenko was partially supported by Gettysburg
College and N Kopteva was
partially supported by FP6 Marie Curie IIF Fellowship.
We also thank the DFG Forschergruppe ``Classification of surfaces",
Heinrich-Heine-Universit\"at D\"usseldorf and the Max-Planck-Institut
in Bonn for financial support.

\section{Preliminaries}\label{sec:prelim}
\subsubsection*{Definitions and notation}

We identify $\PSL$ with the full group of orientation preserving
isometries of
hyperbolic 3--space ${\mathbb H}^3$.

Let  $f,g\in\PSL$. The complex numbers 
$\beta=\beta(f)=\mathrm{tr}^2f-4$, $\beta'=\beta(g)=\mathrm{tr}^2g-4$
and
$\gamma=\gamma(f,g)=\mathrm{tr}[f,g]-2$,
where $[f,g]$ denotes the commutator $fgf^{-1}g^{-1}$, 
are called the parameters for the pair $(f,g)$ and for the group
$\Gamma=\langle f,g\rangle$.

The same 2--generator subgroup $\Gamma$ of $\PSL$ can have different
triples of parameters $(\beta,\beta',\gamma)$ depending on the
choice of the generating pair $(f,g)$.
On the other hand, the triple of parameters 
$(\beta,\beta',\gamma)$ determines $\Gamma$ up to conjugacy
whenever $\gamma\not=0$.
More precisely, if $(f_1,g_1)$ and $(f_2,g_2)$ both have the same
triple of parameters $(\beta,\beta',\gamma)$ with $\gamma\not=0$,
then there is $h\in\PSL$ so that $f_2=hf_1h^{-1}$ and either
$g_2=hg_1h^{-1}$ or $g_2=hg_1^{-1}h^{-1}$, see 
Gehring and Martin \cite{GM89}.

Notice that if $\gamma=0$ then $\Gamma$ is not determined
uniquely by the numbers $\beta$ and $\beta'$.
There are examples of a discrete group $\Gamma_1$
and a non-discrete $\Gamma_2$ with $\gamma=0$ and the same pair
$(\beta,\beta')$.
However, it is known that in this case $f$ and $g$
have a common fixed point in $\partial {\mathbb H}^3$, 
that is, $\Gamma$
is elementary. Since we are concerned only with truly spatial groups,
we may assume that $\gamma\not=0$ throughout this paper.

A triple $(\Gamma;f,g)$, where $\Gamma=\langle f,g\rangle$,
is called an $\mathcal{RP}$ {\em group\/} if the pair $(f,g)$ has
real parameters $(\beta,\beta',\gamma)$. 
Note that the requirement of discreteness is not included in the
definition
of an $\mathcal{RP}$ group.

We recall that a non-trivial 
element $f\in\PSL$ with real $\beta=\beta(f)$
is
{\em elliptic\/}, {\em parabolic\/}, {\em hyperbolic\/} or
{\em $\pi$--loxodromic\/}
according to whether
$\beta\in[-4,0)$, $\beta=0$, $\beta\in(0,+\infty)$ or
$\beta\in(-\infty,-4)$.
If $\beta\notin[-4,\infty)$, ie if $\mathrm{tr} f$ is not real,
then $f$ is called {\em strictly loxodromic\/}.

An elliptic element $f$ of order $n$ is said to be {\em
primitive\/}
if it is a rotation through $2\pi/n$ (with
$\beta=-4\sin^2(\pi/n)$); otherwise, it is
called {\em non-primitive\/} (and then $\beta=-4\sin^2(\pi q/n)$,
where $q$ and $n$
are coprime and $1<q<n/2$).

A hyperbolic plane divides ${\mathbb H}^3$ into two components;
we shall call the closure in ${\mathbb H}^3$
of either of them a {\em half-space\/}
in ${\mathbb H}^3$.
A connected subset $P$ of ${\mathbb H}^3$ with non-empty interior
is said to be
a {\em (convex) polyhedron\/} if it is the intersection of a family
$\cal H$
of half-spaces with the property that each point of $P$
has a neighborhood meeting at most a finite number of
boundaries of elements of $\cal H$.
A closed polyhedron with finite number of faces
bounded by planes $\alpha_1,\dots,\alpha_k$ is denoted
by $\P(\alpha_1,\dots,\alpha_k)$.

We define a {\em tetrahedron\/} $T$ to be a polyhedron which in the
projective
ball model is the intersection of the hyperbolic space ${\mathbb
H}^3$
with a Euclidean tetrahedron $T_E$ (possibly with vertices on
the sphere $\partial {\mathbb H}^3$ at infinity or beyond it) so
that the
intersection of each edge of $T_E$ with ${\mathbb H}^3$ is
non-empty.

A tetrahedron $T$ (possibly of infinite volume) in ${\mathbb H}^3$
is uniquely determined up to isometry by its dihedral angles.
Let $T$  have dihedral angles $\pi/p_1$, $\pi/p_2$, $\pi/p_3$
at the edges of some face and let $\pi/q_1$, $\pi/q_2$, $\pi/q_3$ be
dihedral angles of $T$ that are opposite to
$\pi/p_1$, $\pi/p_2$, $\pi/p_3$, respectively.
Then a standard notation for such a $T$ is
$T[p_1,p_2,p_3;q_1,q_2,q_3]$ and a standard notation for the group
generated by reflections in the faces of $T$ is $G_T$.

We denote the reflection in a plane $\kappa$ by $R_\kappa$.
The axis of an element $h\in\PSL$ with two distinct fixed points
in $\partial{\mathbb H}^3$ is denoted by the same $h$ if this does
not lead
to any confusion.

We use symbols $\infty$ and $\overline\infty$ with the following
convention.
We assume that $\overline\infty>\infty>x$ and
$x/\infty=x/\overline\infty=0$
for every real $x$;
$\infty/x=\infty$ and $\overline\infty/x=\overline\infty$
for every positive real $x$;
in particular, $(\infty,k)=(\overline\infty,k)=k$ for every
positive integer $k$. 
We use $(\cdot,\cdot)$ for $\mathrm{gcd}(\cdot,\cdot)$.

If we denote the dihedral angle between two planes by $\pi/p$
($1< p\leq\overline\infty$),
then the planes intersect when $p$ is finite, they are parallel
(that is, their closures in 
$\overline{{\mathbb H}^3}=\partial{\mathbb H}^3\cup{\mathbb H}^3$
have just one common point in $\partial{\mathbb H}^3$)
when $p=\infty$ and disjoint
(that is, the boundaries of the planes 
do not intersect in $\partial{\mathbb H}^3$)
when $p=\overline\infty$.

\subsubsection*{Convention on pictures}

Since the methods we use here are essentially geometrical, the
paper contains many pictures of hyperbolic polyhedra.
In those pictures, shaded polygons are not faces of polyhedra,
but are drawn to underline the combinatorial structure of the
corresponding polyhedron. They are just intersections of
the polyhedron with appropriate planes.

If a line on a picture is an edge of a polyhedron, then it is
labelled by the dihedral angle at this edge. We often omit labels
 $\pi/2$.
If a line is not an edge of a polyhedron and is labelled by an
integer $k$, then this means that the line is the axis of an elliptic
element of order $k$ that belongs to $\Gamma^*$ (see below).
\fullref{covering} is an exception from this convention. We shall
explain labels in \fullref{covering} in \fullref{rem_cover}.

\section{Fundamental polyhedra and parameters}\label{sec:fp}

From here on $f$ is a primitive elliptic element and $g$ is
hyperbolic.
The main tool in the study of discreteness of $\Gamma=\langle
f,g\rangle$ in Klimenko \cite{Kli90} and
Klimenko--Kopteva \cite{KK02,KK04} 
was a construction of a `convenient'
finite index extension
$\Gamma^*$ of $\Gamma$ together with a fundamental polyhedron
for
each discrete $\Gamma^*$.
In this section, we reproduce the construction of
$\Gamma^*$  and describe the fundamental polyhedra for
all discrete $\Gamma^*$. This is a preliminary part for
\fullref{sec:orbifolds},
where we shall work with the groups $\Gamma^*$ themselves to list
the corresponding
orbifolds.

\subsection{Geometric description of discrete groups for the case
of disjoint axes}\label{construction_disjoint}

\fullref{discr_disjoint} below gives necessary and sufficient
conditions for
discreteness of $\Gamma$ for the case of {\em disjoint axes\/} of
the generators
$f$ and $g$; a complete proof can be found in Klimenko \cite{Kli90}.
We also repeat the geometric construction from \cite{Kli90}
and recall fundamental polyhedra for the series of discrete
groups
$\Gamma^*$ corresponding to Items (2)(i)--(2)(iii) of
\fullref{discr_disjoint}.

\begin{theorem}[{\rm \cite{Kli90}}]\label{discr_disjoint}
Let $f\in\PSL$ be a primitive elliptic element of order $n\geq
3$,
$g\in\PSL$ be a hyperbolic element and let their axes be
disjoint
lines lying in a hyperbolic plane.
\begin{enumerate}
\item There exists $h\in\PSL$ such that
$h^2=fgf^{-1}g^{-1}$
and $(hg)^2=1$.
\item $\Gamma=\langle f,g\rangle$ is discrete if and only
if
one of the following holds:
\begin{itemize}
\item[{\rm(i)}]
$h$ is a hyperbolic, parabolic or primitive elliptic element
of order $p\geq 3$;
\item[{\rm (ii)}]
$n\geq 5$ is odd, $h=x^2$, where $x$ is a primitive elliptic
element of order $n$,
and $y=hgfx^{-1}f$ is a  hyperbolic, parabolic
or primitive elliptic element of order $q\geq 4$ or
\item[{\rm (iii)}]
$n=3$, $h=x^2$, where $x$ is a primitive elliptic element of
order $5$,
and $z=hgf(x^{-1}f)^3$ is a hyperbolic, parabolic or
primitive elliptic element of order $r\geq 3$.
\end{itemize}
\end{enumerate}
\end{theorem}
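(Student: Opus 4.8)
The plan is to prove the two assertions separately: (1) by an explicit construction, and (2) by passing to a reflection extension and applying Poincar\'e's polyhedron theorem. The construction in (1) is what makes the auxiliary element $h$ available for the delicate analysis in (2).

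For (1), note that since the axes of $f$ and $g$ are disjoint geodesics lying in a common hyperbolic plane, they possess a unique common perpendicular geodesic $\ell$, which meets each axis orthogonally. Let $H_\ell\in\PSL$ be the half-turn (rotation through $\pi$) about $\ell$. A half-turn about a line meeting the axis of an elliptic or hyperbolic isometry orthogonally reverses that axis and hence inverts the isometry, so $H_\ell fH_\ell^{-1}=f^{-1}$ and $H_\ell gH_\ell^{-1}=g^{-1}$. I would then set $h:=fgH_\ell$ and verify the two identities directly: inserting $H_\ell^2=1$, one gets $h^2=fgH_\ell fgH_\ell=fg\,(H_\ell fH_\ell)(H_\ell gH_\ell)=fgf^{-1}g^{-1}=[f,g]$, while $hg=fgH_\ell g=fg\,g^{-1}H_\ell=fH_\ell$, so that $(hg)^2=fH_\ell fH_\ell=f\,(H_\ell fH_\ell)=ff^{-1}=1$. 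Since $f,g,H_\ell\in\PSL$, so is $h$, and (1) follows.

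For (2) the strategy is to realize $\Gamma$ inside a reflection extension $\Gamma^*\supseteq\Gamma$ of finite index (the construction reproduced in \fullref{sec:fp}, following \cite{Kli90}), so that $\Gamma$ is discrete if and only if $\Gamma^*$ is, and then to recognise the discrete reflection groups among the $\Gamma^*$. Writing $f=R_{\pi_f}R_\Pi$ as a product of reflections in two planes through the axis of $f$ meeting at dihedral angle $\pi/n$, and $g=R_cR_d$ as a product of reflections in two planes orthogonal to the plane containing the axes and to the axis of $g$, one obtains $\Gamma^*=\langle R_{\pi_f},R_\Pi,R_c,R_d\rangle$. The involution relation $(hg)^2=1$ from (1), together with the order $n$ of $f$ and the geometric type of $h$, forces the candidate fundamental domain of $\Gamma^*$ to be a (possibly ideal or infinite-volume) tetrahedron: dihedral angle $\pi/n$ along the edge coming from $f$, right angles along the edges forced by the involution $hg$, and $\pi/p$ along the edge coming from $h$ when $h$ is primitive elliptic of order $p$ (the angle degenerating to a parallel or ultraparallel pair of faces when $h$ is parabolic or hyperbolic). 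Poincar\'e's polyhedron theorem then gives sufficiency: when all dihedral angles are of the form $\pi/k$ with $k\ge 2$ integral (allowing $k=\infty,\overline\infty$ for the degenerations), the face-pairing reflections satisfy the cycle conditions and $\Gamma^*$ is discrete with this tetrahedron as fundamental domain, yielding item (i) (the value $p=2$ being excluded, since then $[f,g]=h^2=1$ and $\Gamma$ would be elementary). Items (ii) and (iii) arise precisely when the primary tetrahedron fails to be realizable as a hyperbolic polyhedron, i.e.\ for the small values of $n$ at which its Gram matrix loses Lorentzian signature; there one subdivides, replacing $h$ by a square root $x$ with $x^2=h$ (so $h$ is the \emph{non-primitive} rotation $x^2$, of order $n$ in (ii) since $n$ is odd, of order $5$ in (iii)) and introducing the secondary element $y=hgfx^{-1}f$ (resp.\ $z=hgf(x^{-1}f)^3$). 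These are the hyperbolic counterparts of the exceptional $(2,n,\cdot)$ and $(2,3,5)$ spherical triangle configurations, which is exactly why these coincidences, and the bounds $q\ge4$ and $r\ge3$, appear.

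I expect the hard part to be the converse together with the exhaustiveness of the list. For the converse I would argue that discreteness of $\Gamma$ (equivalently of $\Gamma^*$) forces the dihedral angle contributed by $h$ to be a submultiple of $\pi$ — so that $h$ is elliptic of integer order, parabolic, or hyperbolic — by combining the parameter ranges for disjoint axes ($\beta\in(-4,0)$, $\beta'>0$, $\gamma\in(-\infty,0)$) and the trace identity $\tr[f,g]=\gamma+2$ with discreteness inequalities of J\o rgensen type and a direct analysis of the two-generator rotation subgroups appearing as vertex stabilisers. The genuine obstacle is then to prove that the only ways discreteness can survive the degeneration of the primary tetrahedron are the two sporadic families (ii) and (iii): this requires a complete enumeration of the admissible subdivided polyhedra, verification of Poincar\'e's cycle conditions for each, and a proof that no further exceptional families occur. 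Pinning down this case analysis — eliminating all intermediate configurations and establishing the exact order bounds — is where the detailed geometric constructions of \cite{Kli90} are indispensable.
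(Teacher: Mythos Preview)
Your construction for part~(1) is correct, and in fact your element $h=fgH_\ell$ coincides with the paper's $h=R_\alpha R_\tau$: in the paper's notation $H_\ell=e=R_\varepsilon R_\omega$, so $fge=R_\alpha R_\omega R_\tau R_\varepsilon R_\varepsilon R_\omega=R_\alpha R_\omega R_\tau R_\omega=R_\alpha R_\tau$, the last step using $\tau\perp\omega$. The overall strategy for~(2) --- pass to the reflection extension $\Gamma^*=\langle R_\alpha,R_\omega,R_\tau,R_\varepsilon\rangle$ and invoke Poincar\'e --- is also the paper's.

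Where your proposal goes wrong is in the mechanism producing cases (ii) and (iii). You write that these arise ``when the primary tetrahedron fails to be realizable as a hyperbolic polyhedron, i.e.\ for the small values of $n$ at which its Gram matrix loses Lorentzian signature.'' This is not what happens. The polyhedron $\P(\omega,\varepsilon,\alpha,\tau)$ is \emph{always} realized --- it is built directly from the given $f$ and $g$, so there is no Gram-matrix obstruction. What varies is the dihedral angle at $\alpha\cap\tau$, which equals half the rotation angle of $h$. Case~(i) is exactly the situation where this angle is $\pi/p$ with $p\in\{3,4,\dots\}\cup\{\infty,\overline\infty\}$, so Poincar\'e applies to $\P$ directly. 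Cases~(ii) and~(iii) are the \emph{additional} discrete groups for which this angle is $2\pi/n$ ($n$ odd, any $n\geq 5$, not just small $n$) or $2\pi/5$ (with $n=3$): here $\P$ is not a fundamental domain, but $\Gamma^*$ is still discrete because the bisector $\kappa$ of that angle happens to make a good angle ($\pi/3$, resp.\ $2\pi/5$) with $\omega$, and one replaces $\P$ by the smaller polyhedron $\P(\omega,\varepsilon,\alpha,\xi)$ with $\xi=R_\kappa(\omega)$. The new angle at $\varepsilon\cap\xi$ is what gives the parameters $q$ and $r$. This is a Knapp-type phenomenon (a non-primitive angle $2\pi/k$ in a reflection group forcing a finer tessellation), not a signature degeneration.

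For necessity you invoke J\o rgensen-type inequalities; the actual argument in \cite{Kli90}, which the paper only summarizes, is more direct: one shows that if $h$ is elliptic and not of one of the listed forms, the reflection planes of $\Gamma^*$ accumulate. Neither the paper nor your proposal gives this in full, so on that point you are on equal footing; but the misdiagnosis of why the subdivided families appear would prevent you from carrying out the correct enumeration.
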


Let $f$ and $g$ be as in \fullref{discr_disjoint}, and let
$\omega$ be the plane in which
the (disjoint) axes of $f$ and $g$ lie.

Denote by $\varepsilon$ the plane that passes
through the common perpendicular
to the axes of $f$ and $g$ orthogonally to $\omega$.
Let $\alpha$ and $\tau$ be the planes such that
$f=R_\alpha R_\omega$ and $g=R_\tau R_\varepsilon$,
and let $\P=\P(\omega,\varepsilon,\alpha,\tau)$.
The planes $\omega$ and $\alpha$ make a dihedral angle
of $\pi/n$;
the planes $\varepsilon$ and $\tau$ are disjoint so that the axis
of $g$
is their common perpendicular. Moreover, $\alpha$ is orthogonal
to $\varepsilon$ and $\tau$ is orthogonal to $\omega$.
The planes $\alpha$ and $\tau$ either intersect non-orthogonally
or are parallel or disjoint. We denote the dihedral angle of
$\P$ between these planes by
$\pi/p$, $p>2$, where, by convention, 
$p=\infty$ if $\alpha$ and $\tau$
are parallel and $p=\overline\infty$ if they are disjoint.

We consider two finite index extensions of 
$\Gamma=\langle f,g\rangle$:
$\widetilde\Gamma=\langle f,g,e\rangle$, where $e=R_\varepsilon
R_\omega$, and
$\Gamma^*=\langle f,g,e,R_\omega\rangle$.
$\widetilde\Gamma$ is the orientation preserving subgroup of
index $2$ in $\Gamma^*$
and $\widetilde\Gamma$ contains $\Gamma$ as a subgroup of index
at most $2$. In \fullref{sec:orbifolds}, we shall see when
$\Gamma=\widetilde\Gamma$
and when $\Gamma\not=\widetilde\Gamma$.

It was shown in \cite{Kli90} that $h=R_\alpha R_\tau$
is the only element that satisfies both $h^2=[f,g]$ and
$(hg)^2=1$.
There are three series of discrete groups $\Gamma^*$
depending on how $\P$ is decomposed into fundamental polyhedra
for $\Gamma^*$.
The series correspond to the conditions
(2)(i), (2)(ii) and (2)(iii) of \fullref{discr_disjoint}.

{\bf 1.} \qua
$h$ is a hyperbolic, parabolic or primitive elliptic element
of order~$p\geq 3$ (that is (2)(i) holds)
if and only if the dihedral angle of $\P$ between $\alpha$
and $\tau$ is of the form $\pi/p$ with
$p=\overline\infty$, $p=\infty$, or
$p\in{\mathbb Z}$, $p\geq 3$, respectively.
This is the first series of the discrete groups.
In this case the polyhedron $\P$ is
a fundamental polyhedron for $\Gamma^*$.
In \fullref{fp_disjoint}(a) $\P$ is drawn under assumption
that
$1/n+1/p>1/2$.

\begin{figure}[ht!]
\centering
\begin{tabular}{cc}
\labellist
\small
\pinlabel {\turnbox{45}{$\pi/p$}} at 50 105
\pinlabel {$\tau$} at 40 70
\pinlabel {$\omega$} at 35 46
\pinlabel {\turnbox{-45}{$\pi/n$}} at 0 40
\pinlabel {$\alpha$} at 62 22
\pinlabel {$f$} at 35 15
\pinlabel {$\varepsilon$} at 100 38
\pinlabel {$g$} at 115 65 
\endlabellist
\includegraphics[width=4.6 cm]{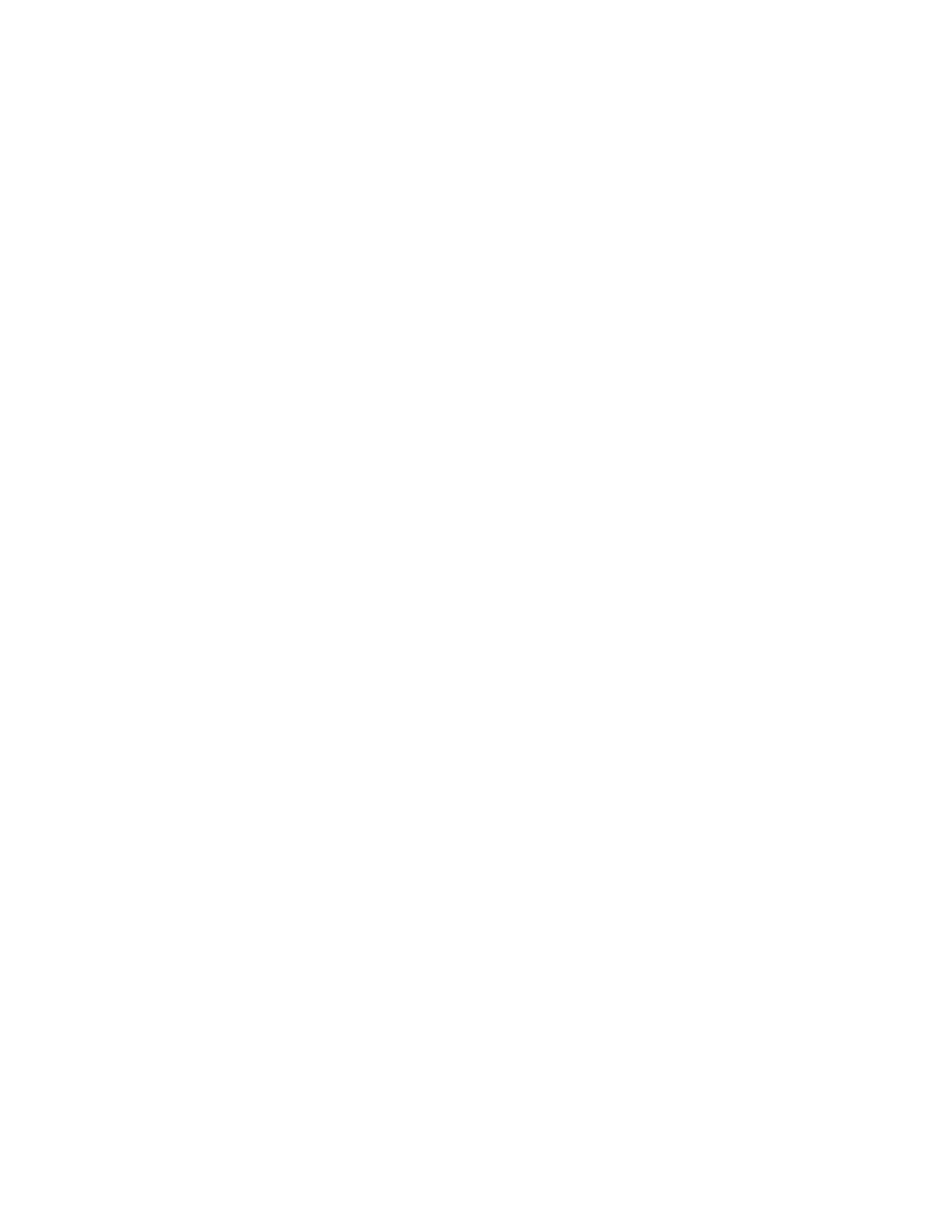} \qquad &\qquad
\labellist
\small
\pinlabel{\turnbox{30}{$2\pi/n$}} at 55 115
\pinlabel {$B$} at 5 75
\pinlabel {\turnbox{-45}{$\pi/n$}} at 40 40
\pinlabel {$\tau$} at 65 100
\pinlabel {$\omega$} at 65 60
\pinlabel {$3$} at 90 60 
\pinlabel {$A$} at 85 5
\pinlabel {$\xi_1$} at 110 75
\pinlabel {$\alpha$} at 92 30 
\pinlabel {$E$} at 95 100
\pinlabel {$\varepsilon$} at 128 47
\pinlabel {$C$} at 155 95 
\pinlabel {$q$} at 163 80
\pinlabel {$D$} at 170 55
\endlabellist
\includegraphics[width=5.6 cm]{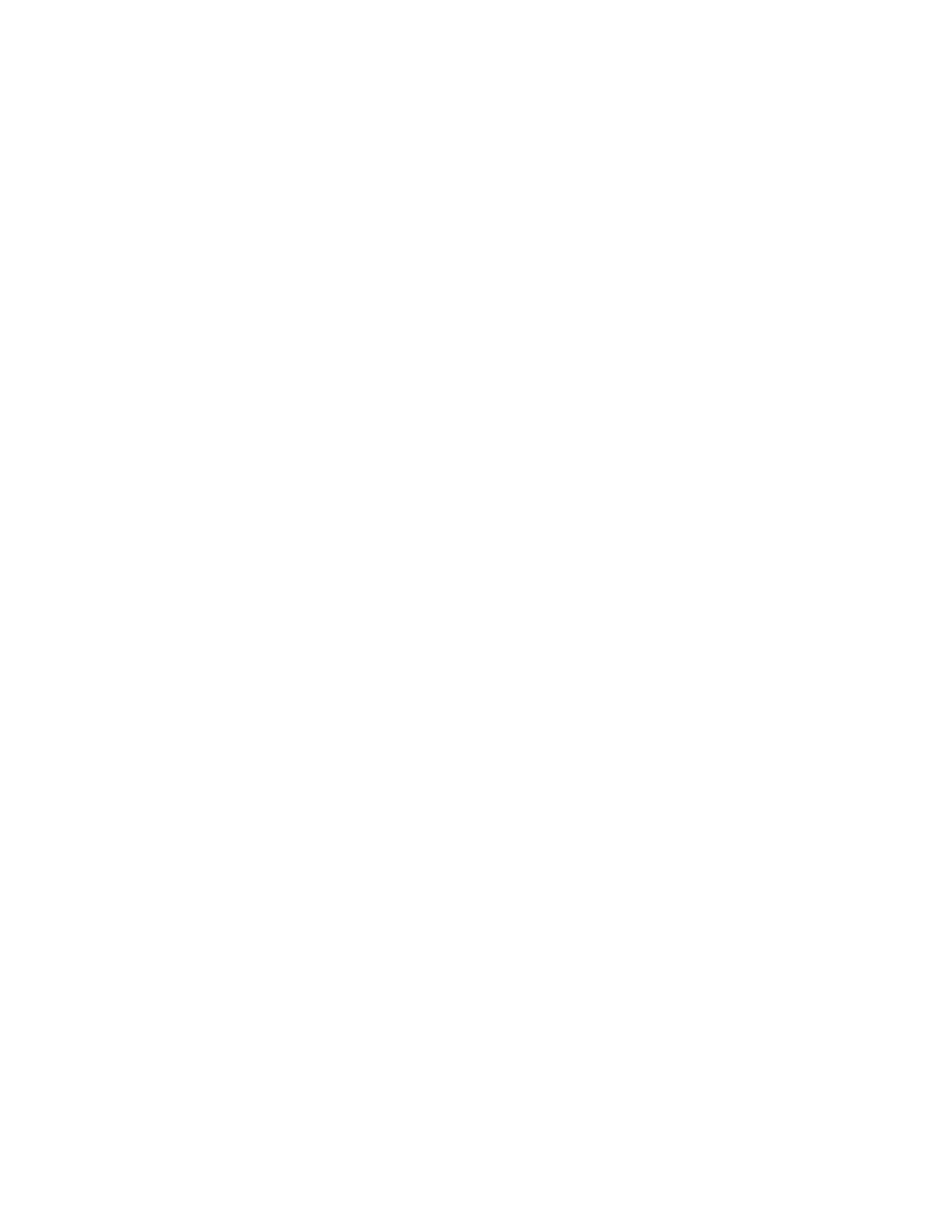}\\
(a) \qquad &\qquad (b)\\
\multicolumn{2}{c}{
\labellist
\small
\pinlabel {\turnbox{60}{$2\pi/5$}} at 15 80
\pinlabel {\turnbox{-60}{$\pi/3$}} at 10 23
\pinlabel {$\alpha$} at 33 17
\pinlabel {$\xi_2$} at 40 33
\pinlabel {$3$} at 38 68 
\pinlabel {$2$} at 50 43
\pinlabel {$2$} at 70 40 
\pinlabel {\turnbox{-30}{$r=3$}} at 53 29
\pinlabel {$5$} at 60 15
\pinlabel {$5$} at 90 45
\pinlabel {$\varepsilon$} at 48 13
\pinlabel {$\omega$} at 107 38
\endlabellist
\includegraphics[width=5.6cm]{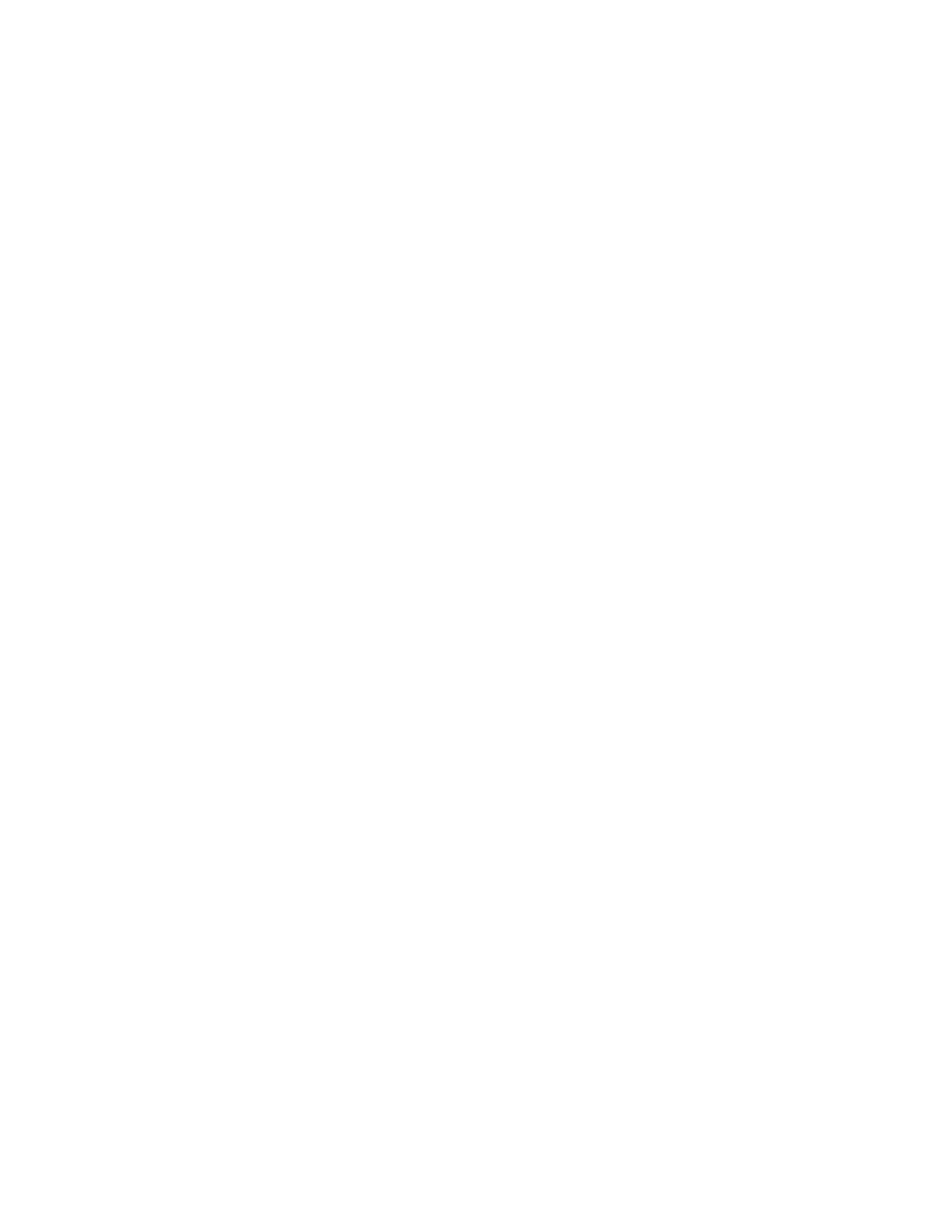}}\\
\multicolumn{2}{c}{(c)}
\end{tabular}
\caption{Fundamental polyhedra for $\Gamma^*$
($\gamma<0$)}\label{fp_disjoint}
\end{figure}

The other discrete groups appear only if $h$ is the square of a
primitive
elliptic element $x=R_\kappa R_\tau$, where $\kappa$ is
the bisector of the dihedral
angle of $\P$ made by $\alpha$ and $\tau$.
Fundamental polyhedra for $\Gamma^*$ corresponding to these two
series are
obtained by decomposing $\P$ into smaller polyhedra as follows
(see \cite{Kli90} for the proof).

{\bf 2.} \qua
Let $\Gamma$ be determined by the condition (2)(ii).
In this case, $n\geq 5$ is odd, the dihedral angle of $\P$
between
$\alpha$ and $\tau$ is $2\pi/n$,
and $\kappa$ and $\omega$ make a dihedral angle of $\pi/3$.
Hence, $\xi_1$, where $\xi_1=R_\kappa(\omega)$, and $\omega$ also
make a dihedral angle
of $\pi/3$, and $\xi_1$ and $\alpha$ are orthogonal. The planes
$\xi_1$ and $\varepsilon$ either intersect at an angle of
$\pi/q$, where
$q\in{\mathbb Z}$, $q\geq 4$, or are parallel or disjoint
($q=3$ is not included, because then $\varepsilon$ and $\tau$
must intersect).
One can show that if $y=R_\varepsilon R_{\xi_1}$, then
$y=hgfx^{-1}f$.
The polyhedron $\P(\omega,\varepsilon,\alpha,\xi_1)$ is a
fundamental polyhedron
for $\Gamma^*$.
For $q=4$ or $5$ and $n=5$,
$\P(\omega,\varepsilon,\alpha,\xi_1)$ is a compact tetrahedron.
It is denoted by $ABCD$ in \fullref{fp_disjoint}(b) and
shown by bold lines.

{\bf 3.} \qua
Let $\Gamma$ be determined by the condition (2)(iii).
In this case $n=3$ and the dihedral angle of $\P$ between
$\alpha$ and $\tau$
is $2\pi/5$. Denote $\xi_2=R_\kappa(\omega)$.
The planes $\kappa$ and $\omega$ make a dihedral angle of
$2\pi/5$ and, hence, $\xi_2$ and $\omega$ make an
angle of $\pi/5$. It can be shown that $\xi_2$ and $\alpha$ are
orthogonal. The planes $\varepsilon$ and $\xi_2$ either intersect
at an angle
of $\pi/r$, where $r\in{\mathbb Z}$, $r\geq 3$, or are parallel or
disjoint.
In this case $z=R_\varepsilon R_{\xi_2}=hgf(x^{-1}f)^3$. The
polyhedron
$\P(\omega,\varepsilon,\alpha,\xi_2)$ is a fundamental polyhedron
for
$\Gamma^*$ (see \fullref{fp_disjoint}(c), where
$\P(\omega,\varepsilon,\alpha,\xi_2)$ is drawn for $r=3$).

\subsection{Parameters for discrete groups in the case of
disjoint axes}

Let 
$$
\mathcal{U}=\{u|u=i\pi/p, p\in{\mathbb Z}, p\geq 2\}\cup[0,+\infty).
$$
Define a function
$t \co \thinspace
\mathcal{U}\to\{2,3,\dots\}\cup\{\infty,\overline\infty\}$ 
as follows:
$$
t(u)=
\begin{cases}
p & \text{if } u=i\pi/p\\
\infty & \text{if } u=0\\
\overline\infty & \text{if } u>0.
\end{cases}
$$

The purpose of introducing the function $t(u)$ is to shorten
statements that involve parameters $(\beta,\beta',\gamma)$.
We use it in Theorems~\ref{param_disjoint}, \ref{param_intersect},
\ref{groups_disjoint}, \ref{groups_even}
and~\ref{groups_odd}.

Now we give a parameter version of \fullref{discr_disjoint}
with a proof. \fullref{param_disjoint} is new and did not
appear
before, however, we did use a similar technique in earlier
papers.

\begin{theorem}\label{param_disjoint}
Let $(\Gamma;f,g)$ be an $\mathcal{RP}$ group with
$\beta=-4\sin^2(\pi/n)$, where $n\geq 3$ is an integer,
$\beta'\in(0,+\infty)$ and $\gamma\in(-\infty,0)$.
Then $\Gamma$ is discrete if and only if
one of the following holds:
\begin{enumerate}
\item
$\gamma=-4\cosh^2u$, where
$u\in\mathcal{U}$ and $t(u)\geq 3$;
\item
$n\geq 5$, $(n,2)=1$, $\gamma=-(\beta+2)^2$ and
$\beta'=4(\beta+4)\cosh^2u-4$, where
$u\in\mathcal{U}$ and $t(u)\geq 4$ or
\item
$\beta=-3$, $\gamma=(\sqrt{5}-3)/2$ and
$\beta'=2(7+3\sqrt{5})\cosh^2u-4$, where
$u\in\mathcal{U}$ and $t(u)\geq 3$.
\end{enumerate}
\end{theorem}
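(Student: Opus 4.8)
The plan is to translate the three geometric alternatives of \fullref{discr_disjoint}(2) into the parameter conditions (1)--(3) one at a time, using the explicit reflection descriptions of $f,g$ and of the auxiliary elements $h,x,y,z$ recorded in \fullref{sec:fp}. Since $\gamma\in(-\infty,0)$ is nonzero, the triple $(\beta,\beta',\gamma)$ determines $\Gamma$ up to conjugacy, so it suffices to match parameters.

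The starting point is the identity $[f,g]=(R_\alpha R_\tau)^2=h^2$, which drops out of $f=R_\alpha R_\omega$, $g=R_\tau R_\varepsilon$ together with the orthogonalities $\alpha\perp\varepsilon$, $\tau\perp\omega$, $\varepsilon\perp\omega$ (these let one cancel the inner reflections). Lifting to $\mathrm{SL}(2,\mathbb{C})$, the commutator has the well-defined trace $\gamma+2$ while $\tr(h^2)=(\tr h)^2-2$, so $(\tr h)^2-2=\pm(\gamma+2)$; I fix this sign (constant on the connected parameter region for fixed $n$) by the unambiguous parabolic specialisation $\alpha\parallel\tau$, where $h$ is parabolic, $(\tr h)^2=4$ and the theorem demands $\gamma=-4$, forcing the basic identity $\gamma=-(\tr h)^2$. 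Writing $\tr h=2\cosh u$ with $u\in\mathcal{U}$ turns this into $\gamma=-4\cosh^2 u$ and makes $t(u)$ record exactly the type of $h=R_\alpha R_\tau$ (rotation by $2\pi/p$ when $u=i\pi/p$, parabolic when $u=0$, hyperbolic when $u>0$). A \emph{primitive} rotation corresponds precisely to $u\in\mathcal{U}$, a non-primitive one of rotation $2\pi q/p$ to $u=i\pi q/p\notin\mathcal{U}$, and discarding the half-turn is $t(u)\geq 3$. This is exactly the equivalence of \fullref{discr_disjoint}(2)(i) with condition~(1), and it leaves $\beta'$ free because the type of $h$ depends on $\gamma$ alone.

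For the other two alternatives $h=x^2$ is non-primitive, so condition~(1) fails and $\gamma$ is forced to a distinguished value. In case (2)(ii) the dihedral angle $\alpha\tau$ is $2\pi/n$ and $x$ is primitive of order $n$, so $\tr h=(\tr x)^2-2=2\cos(2\pi/n)=\beta+2$, and the basic identity gives $\gamma=-(\beta+2)^2$. In case (2)(iii), $n=3$ (so $\beta=-3$) and $x$ is primitive of order $5$, so $\tr h=2\cos(2\pi/5)$ and $\gamma=-4\cos^2(2\pi/5)=(\sqrt5-3)/2$ via $\cos(2\pi/5)=(\sqrt5-1)/4$. Thus the $\beta,\gamma$ parts of (2) and (3) record precisely the hypotheses on $h$. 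It then remains to convert the condition on $y$ (resp.\ $z$) into the formula for $\beta'$, and this is the technical heart. Parametrising $\tr y=2\cosh u$ with $u\in\mathcal{U}$ makes $t(u)$ the type of $y=R_\varepsilon R_{\xi_1}$, and, using $\beta+4=(\tr f)^2$, $\beta'+4=(\tr g)^2$ and $(\tr y)^2=4\cosh^2 u$, the claimed $\beta'=4(\beta+4)\cosh^2 u-4$ is equivalent to the clean relation $(\tr g)^2=(\tr f)^2(\tr y)^2$. I would establish this either synthetically, by hyperbolic trigonometry of the fundamental tetrahedron $\P(\omega,\varepsilon,\alpha,\xi_1)$ of \fullref{fp_disjoint}(b) --- dihedral angles $\pi/n,\pi/3,\pi/q$ together with three right angles --- expressing the translation length of $g=R_\tau R_\varepsilon$ as a function of the single free angle $\xi_1\varepsilon=\pi/q$ and the fixed angles; or algebraically, by expanding $\tr y$ for $y=hgfx^{-1}f$ as a polynomial in $(\beta,\beta',\gamma)$ through standard trace identities, substituting $\gamma=-(\beta+2)^2$ and $\tr x=\tr f=2\cos(\pi/n)$, and solving for $\beta'$. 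I expect this derivation to be the main obstacle.

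The same scheme applied to $z=hgf(x^{-1}f)^3$ in the tetrahedron $\P(\omega,\varepsilon,\alpha,\xi_2)$ of \fullref{fp_disjoint}(c) yields condition~(3): here $\beta+4=1$, but $x$ is now of order $5$ and the cube $(x^{-1}f)^3$ raises the trace factor to $(\tr x)^4=(7+3\sqrt5)/2$, giving $(\tr g)^2=(\tr x)^4(\tr z)^2$ and hence the coefficient $2(7+3\sqrt5)$ rather than $4$ --- which is exactly why (3) is not the $n=3$ specialisation of (2). Finally I would match the index ranges: $t(u)\geq 3$ in (1) discards only the half-turn; $t(u)\geq 4$ in (2) encodes that $q=3$ is geometrically forbidden (it would force $\varepsilon$ and $\tau$ to meet, contradicting $g$ hyperbolic, as noted in \fullref{sec:fp}); and $t(u)\geq 3$ in (3) admits every legal $z$. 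Checking in passing that $\beta'>0$ over each range --- which for (2) is precisely what requires $n\geq 5$ --- confirms the loci lie in the admissible region $\beta'\in(0,\infty)$. Since the alternatives (2)(i)--(2)(iii) of \fullref{discr_disjoint} exhaust discreteness, so do (1)--(3) in parameters, which is the theorem.
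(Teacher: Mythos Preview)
Your plan is correct and follows the paper's proof essentially step for step: reduce to \fullref{discr_disjoint}, read off $\gamma$ from the type of $h$ via $\gamma=-(\tr h)^2$ (the paper fixes the same sign by the same continuity/parabolic argument), and then determine $\beta'$ from the geometry of the fundamental polyhedron in cases (2)(ii)--(iii). The step you flag as the ``main obstacle'' is exactly what the paper carries out explicitly by hyperbolic trigonometry in the tetrahedron $ABCD$ of \fullref{fp_disjoint}(b), obtaining $\cosh^2 T=(\beta+4)\cos^2(\pi/q)$ and hence $\beta'=4(\beta+4)\cos^2(\pi/q)-4$; your repackaging as the trace identities $(\tr g)^2=(\tr f)^2(\tr y)^2$ and $(\tr g)^2=(\tr x)^4(\tr z)^2$ is equivalent and tidy, though not stated in the paper.
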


\begin{proof}
$\beta=-4\sin^2(\pi/n)$, where $n\in {\mathbb Z}$ and $n\geq 3$,
if and only if $f$ is a primitive
elliptic element of order $n\geq 3$, 
and $\beta'\in(0,+\infty)$ if and only if
$g$ is hyperbolic. Since $n\geq 3$ and $\gamma\in(-\infty,0)$,
$\Gamma$ is non-elementary and the axes of $f$ and $g$ are
disjoint by Klimenko and Kopteva \cite[Theorem 1]{KK02}.
So the hypotheses of \fullref{param_disjoint}
and \fullref{discr_disjoint} are equivalent.

Let us find explicit values of $\beta'$ and $\gamma$ for each of
the
discrete groups from part (2) of \fullref{discr_disjoint}.
The idea is to use the fundamental polyhedra
described in \fullref{construction_disjoint}.
Since $\gamma=\tr[f,g]-2$ and $h$
is a square root of $[f,g]$,
it is not difficult to get conditions on~$\gamma$.

We start with (2)(i) in \fullref{discr_disjoint}.
The element $h=R_\alpha R_\tau$ is hyperbolic if and only if the
planes
$\alpha$ and $\tau$ (see \fullref{fp_disjoint}(a)) are
disjoint in $\overline{{\mathbb H}^3}$.
Therefore, $\tr[f,g]=\tr h^2=-2\cosh(2d)$,
where $d$ is the hyperbolic distance between $\alpha$ and $\tau$.
Here $\tr[f,g]$ must be negative, because $\gamma$
is negative for all values of $\beta$ and $\beta'$ that satisfy
the hypotheses
of the theorem.

The element $h$ is parabolic if and only if $[f,g]$ is parabolic
and
if and only if $\tr[f,g]=-2$ which is equivalent to $\gamma=-4$
($\tr[f,g]=2$ would give $\gamma=0$).

Thus, $h$ is hyperbolic or parabolic if and only if
\begin{equation}\label{hyppar}
\gamma=\tr[f,g]-2=-2\cosh(2d)-2=-4\cosh^2d, \qua d\geq 0.
\end{equation}
Now suppose that $h$ is an elliptic element with rotation angle
$\phi$,
where $\phi/2=\pi/p<\pi/2$ is the dihedral angle of
$\P(\omega,\varepsilon,\alpha,\tau)$ made by
$\alpha$ and $\tau$. Then $[f,g]=h^2$ is also elliptic with
rotation angle
$2\phi$. 
Note that there is another square root $\overline h$ of
the elliptic commutator $[f,g]$ which has rotation angle
$\overline \phi=\pi+\phi$. One of the two angles
$\phi$ and $\overline \phi$ is a solution to
$\tr[f,g]=2\cos\theta$, the other one to 
$\tr[f,g]=-2\cos\theta$. Since $\tr[f,g]$ depends on $\theta$
continuously and we know that $\tr[f,g]$ must approach
$-2$ as $\phi\to 0$ (geometrically it means that
 $[f,g]$, and $h$, approaches a parabolic element as soon as
the dihedral angle $\pi/p$ above approaches $0$),
we conclude that $\phi$ 
is a solution to the second equation, 
that is, $\tr[f,g]=-2\cos\phi$.

On the other hand, if $\tr[f,g]\in(-2,2)$ is given, we can use
the formula
$\tr[f,g]=-2\cos\phi$, $0<\phi<\pi$, to determine the rotation
angle $\phi$
of the element $h$ from \fullref{discr_disjoint}.

Hence, $h$ is a primitive elliptic element of order $p$
($p\geq 3$),
that is,
$\phi=2\pi/p$, if and only if
\begin{equation}\label{primell}
\gamma=\tr[f,g]-2=-2\cos(2\pi/p)-2=-4\cos^2(\pi/p), \qua p\in{\mathbb
Z}, \qua p\geq 3.
\end{equation}
Now we can combine the formulas \eqref{hyppar} and
\eqref{primell} for $\gamma$ and write them as
$$\gamma=-4\cosh^2u, \text{ where } u\in\mathcal{U} \text{ and } 
t(u)\geq 3.$$

It is clear that for the groups from Item (2)(i) of
\fullref{discr_disjoint}, we have no further restrictions on
$n$ and $\beta'$.
So, (2)(i) of \fullref{discr_disjoint} is equivalent to
part 1 of \fullref{param_disjoint}.

Now consider (2)(ii) of \fullref{discr_disjoint}.
Here $n\geq 5$ is odd and $h$ is the square of a
primitive
elliptic element of order~$n$ (that is, $\phi=4\pi/n$)
if and only if $n\geq 5$, $(n,2)=1$ and
$\gamma=-4\cos^2(2\pi/n)=-(\beta+2)^2$.

So it remains to specify $\beta'$ for (2)(ii). Now $\beta'$
depends on
the order of the element $y$ defined in \fullref{discr_disjoint}.
Since $g=R_\tau R_\varepsilon$, we have
$\beta'=\tr^2g-4=4\sinh^2T$, where $T$ is the
distance between the planes $\varepsilon$ and $\tau$.

Let us show how to calculate $T$ for the case $n=5$ and $4\leq q<\infty$.
Since the link of the vertex $B$ in \fullref{fp_disjoint}(b)
is a spherical triangle, we get
$$
\cos \angle ABE=\frac{\cos(2\pi/n)}{\sin(\pi/n)} \text{ and }
\cos\angle ABC=\frac{1}{2\sin(\pi/n)}.
$$
Further, since $\triangle ABC$ is a right triangle with
$\angle ACB=\pi/q$, we have
$$
\cosh AB=\frac{\cos\angle ACB}{\sin\angle ABC}=
\frac{2\cos(\pi/q)\sin(\pi/n)}{\sqrt{4\sin^2(\pi/n)-1}}.
$$
Since $T$ is the length of the common perpendicular to $BE$ and $AD$,
we can now calculate $\cosh T$ from the plane $\omega$:
$$
\cosh T=\sin \angle ABE\cdot\cosh AB.
$$
Since
\begin{align*}
\sin^2\angle ABE&=\frac{\sin^2(\pi/n)-\cos^2(2\pi/n)}{\sin^2(\pi/n)}
=\frac{(\sin^2(\pi/n)-1)(1-4\sin^2(\pi/n))}{\sin^2(\pi/n)}\\
&=\frac{\cos^2(\pi/n)(4\sin^2(\pi/n)-1)}{\sin^2(\pi/n)},
\end{align*}
we get that
$$
\cosh^2T=4\cos^2(\pi/n)\cos^2(\pi/q)=(\beta+4)\cos^2(\pi/q).
$$
Hence, $\beta'=4\sinh^2T=4(\beta+4)\cos^2(\pi/q)-4$.

Analogous calculations can be done for the other cases
(when $n>5$ or $q\geq\infty$). We obtain
that for the groups from Item (2)(ii),
$$
\beta'=\begin{cases}
4(\beta+4)\cos^2(\pi/q)-4 & \text{if } 4\leq q<\infty\\
4(\beta+4)-4 & \text{if }  q=\infty\\
4(\beta+4)\cosh^2d_1-4 & \text{if }  q=\overline\infty,
\end{cases}
$$
where $d_1$ is the distance between $\varepsilon$ and $\xi_1$ if
they
are disjoint
and $\pi/q$ is the angle between $\varepsilon$ and $\xi_1$ if
they
intersect.
Hence, $\beta'$ can be written in general form as follows:
$$
\beta'=4(\beta+4)\cosh^2u-4 \text{ where } 
u\in\mathcal{U} \text{ and } t(u)\geq 4.$$
Finally, for the groups from Item (2)(iii) of \fullref{discr_disjoint},
we have $n=3$ and $\phi=4\pi/5$
and therefore
$$\beta = -3 \text{ and } 
\gamma=-4\cos^2(2\pi/5)=(\sqrt{5}-3)/2.$$
Moreover, we can calculate
$$
\beta'=
\begin{cases}
2(7+3\sqrt{5})\cos^2(\pi/r)-4 & \text{if }  3\leq r<\infty\\
2(7+3\sqrt{5})-4 & \text{if }  r=\infty\\
2(7+3\sqrt{5})\cosh^2d_2-4 & \text{if }  r=\overline\infty,
\end{cases}
$$
where $d_2$ is the distance between $\varepsilon$ and $\xi_2$ if
they
are disjoint
and $\pi/r$ is the angle between $\varepsilon$ and $\xi_2$ if
they
intersect and, hence,
$$\beta'=2(7+3\sqrt{5})\cosh^2u-4, \text{ where }  
u\in\mathcal{U} \text{ and } t(u)\geq 3.$$
\end{proof}

\subsection{Geometric description of discrete groups for the case
of intersecting axes}\label{construction_intersect}

Now we consider $\Gamma=\langle f,g \rangle$
with $f$ primitive elliptic of order $n>2$ and $g$ hyperbolic
with {\em non-orthogonally intersecting axes\/}.
In Klimenko--Kopteva \cite{KK02,KK04}, 
criteria for discreteness of such
groups were found for $n$ even and odd, respectively.
In this section we recall the criteria in terms of parameters
and remind the construction of a fundamental polyhedron for
each discrete group $\Gamma^*$.

\begin{theorem}[{\rm \cite{KK02} and \cite{KK04}}]\label{param_intersect}
Let $(\Gamma;f,g)$ be an $\mathcal{RP}$ group with
$\beta=-4\sin^2(\pi/n)$, where $n\geq 3$ is an integer, $\beta'\in(0,\infty)$ and
$\gamma\in(0,-\beta\beta'/4)$.
Then $\Gamma$ is discrete if and only if
$(\beta,\beta',\gamma)$ is one of the triples listed in
\fullref{table_param}.
\end{theorem}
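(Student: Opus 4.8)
The plan is to follow the method used to prove \fullref{param_disjoint}: I would take the geometric discreteness criteria established in \cite{KK02} (for $n$ even) and \cite{KK04} (for $n$ odd) and re-express them as conditions on the triple $(\beta,\beta',\gamma)$, which then assembles into \fullref{table_param}.

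First I would confirm that the parameter hypotheses are equivalent to the geometric ones. As in the disjoint case, $\beta=-4\sin^2(\pi/n)$ with $n\geq 3$ says exactly that $f$ is primitive elliptic of order $n$, and $\beta'\in(0,\infty)$ that $g$ is hyperbolic. The decisive point is that, by \cite[Theorem 1]{KK02}, the remaining condition $\gamma\in(0,-\beta\beta'/4)$ holds if and only if $\Gamma$ is non-elementary with the axes of $f$ and $g$ meeting non-orthogonally at a point of ${\mathbb H}^3$. In fact $\gamma$ encodes the intersection angle $\theta$ through the relation $\gamma=-\beta\beta'\sin^2\theta/4$, so that $\theta\in(0,\pi/2)$ corresponds precisely to $\gamma\in(0,-\beta\beta'/4)$ (with $\theta\to 0$ the elementary limit and $\theta=\pi/2$ the excluded orthogonal case). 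Hence the hypotheses of \fullref{param_intersect} coincide with those of the criteria in \cite{KK02,KK04}, and the task reduces to a translation.

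Second, for each family of discrete groups arising from the construction of \fullref{construction_intersect}, I would read the parameters off the corresponding fundamental polyhedron for $\Gamma^*$. Here $\beta=-4\sin^2(\pi/n)$ is fixed by the order of $f$; the value $\beta'=\tr^2 g-4=4\sinh^2 T$ comes from the translation length of $g$, where $T$ is the distance between the two defining planes whose product equals $g$; and $\gamma=\tr[f,g]-2$ comes from the commutator, whose trace exceeds $2$ throughout this regime (so $[f,g]$ is hyperbolic and $\gamma>0$). The value of $\gamma$ is pinned down by the intersection angle $\theta$, which the construction expresses in terms of the dihedral angles of the polyhedron. Each such computation is carried out exactly as the calculation of $\cosh T$ in \fullref{param_disjoint}: one analyzes the spherical-triangle link at each relevant vertex and applies the hyperbolic right-triangle relations to convert dihedral-angle data into the distances and angles entering $\beta'$ and $\gamma$.

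The hard part will be the scope of the case analysis rather than any individual computation. In contrast to the disjoint case, which split into three clean families, the intersecting case is divided first by the parity of $n$ (handled separately in \cite{KK02} and \cite{KK04}) and then into many combinatorial types of fundamental polyhedra --- compact and non-compact tetrahedra together with their degenerations as a dihedral angle passes through $\infty$ and $\overline\infty$. Each type contributes its own rows to \fullref{table_param} and demands a separate trigonometric computation for $\beta'$ and $\gamma$. The two delicate points are: (i) recording only those configurations that genuinely yield discreteness of $\Gamma^*$; and (ii) absorbing the limiting cases of parallel ($\infty$) and disjoint ($\overline\infty$) plane pairs into the same closed-form parametric expressions via the function $t(u)$, exactly as in \fullref{param_disjoint}.
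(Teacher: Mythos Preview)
Your plan is sound and mirrors exactly the method used for \fullref{param_disjoint}, but you should note that the paper does not actually carry out this proof: \fullref{param_intersect} is stated as a citation of \cite{KK02,KK04}, where the parameter criteria in \fullref{table_param} were already established, and the present paper only recalls the associated fundamental polyhedra in \fullref{construction_intersect}. Thus your proposal correctly reconstructs the argument behind the cited result rather than matching a proof found in this paper.
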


{\small
\begin{longtable}{cccc}
\caption{All parameters for discrete $\mathcal{RP}$ groups generated
by a primitive elliptic element $f$ of order $n\geq 3$ and a 
hyperbolic element $g$ whose axes intersect non-orthogonally.}
\setobjecttype{Table}\label{table_param}\\
\hline
\rule[-2ex]{0ex}{5ex} & $\beta=\beta(f)$ & $\gamma=\gamma(f,g)$ &
$\beta'=\beta(g)$ \\
\hline
\endfirsthead
\caption[]{(continued)}\\
\hline
\rule[-2ex]{0ex}{5ex} & $\beta=\beta(f)$ & $\gamma=\gamma(f,g)$ &
$\beta'=\beta(g)$ \\
\hline
\endhead
\multicolumn{4}{c}%
{\rule[-2ex]{0ex}{6ex}$n\geq 4$, $(n,2)=2$, \ $u,v\in \mathcal{U}$,
\ $1/n+1/t(u)<1/2$}\\
$P_1$
   & \rule[-2ex]{0ex}{6ex} $-4\sin^2\frac{\pi}n,\ n\geq 4$
   & $\displaystyle 4\cosh^2u+\beta$,
   & $\displaystyle \frac{4}\gamma \cosh^2v-\frac
{4\gamma}\beta$,\\
   & & $(t(u),2)=2$ &  $t(v)\geq 3$\\ \pagebreak
$P_2$ 
   & \rule[-2ex]{0ex}{6ex} $-4\sin^2\frac{\pi}n,\ n\geq 4$
   & $\displaystyle 4\cosh^2u+\beta$,
   & $\displaystyle \frac{4(\gamma-\beta)}\gamma
\cosh^2v-\frac{4\gamma}\beta$,\\
   & & $(t(u),2)=1$ & $t(v) \geq 3$\\
$P_3$
   & \rule[-2ex]{0ex}{6ex} $-2$
   & $2\cos(2\pi/m)$, $m\geq 5$,
   & $\gamma^2+4\gamma$\\
   & & $(m,2)=1$ & \\
\hline
\multicolumn{4}{c}%
{\rule[-2ex]{0ex}{6ex}$n\geq 3$, $(n,2)=1$, \ $u,v\in \mathcal{U}$,\
$1/n+1/t(u)<1/2$;}\\
\multicolumn{4}{c}%
{$\displaystyle
S=-2\frac{(\gamma-\beta)^2\cos\frac{\pi}{n}+\gamma(\gamma+\beta)}
{\gamma\beta}$, \
 $\displaystyle
T=-2\frac{(\beta+2)^2\cos\frac{\pi}{n}}{\beta+1}-2\frac{
(\beta^2+6\beta+4)}\beta$}\\
$P_4$
   & \rule[-2ex]{0ex}{7ex} $-4\sin^2\frac{\pi}n, \ n\geq 3$
   & $\displaystyle4\cosh^2u+\beta$,
   & $\displaystyle\frac 2\gamma(\cosh v-\cos\frac{\pi}{n})+S$,
\\
   & & $(t(u),2)=2$ & $t(v)\geq 2$\\
$P_5$
   & \rule[-2ex]{0ex}{6ex} $-4\sin^2\frac{\pi}n, \ n\geq 3$
   & $\displaystyle4\cosh^2u+\beta$,
   & $\displaystyle{\frac{2(\gamma-\beta)}{\gamma}\cosh v+S}$,\\
   & & $(t(u),2)=1$ & $t(v)\geq 2$\\
$P_6$
   & \rule[-2ex]{0ex}{6ex} $-4\sin^2\frac{\pi}n, \ n\geq 7$
   & $(\beta+4)(\beta+1)$
   & $\displaystyle{\frac{2(\beta+2)^2}{\beta+1}\cosh v+T, \
t(v)\geq 2}$\\
$P_7$
   & \rule[-2ex]{0ex}{6ex} $\displaystyle-4\sin^2\frac{\pi}n$,
   & $\beta+3$
   & $\displaystyle\frac2{\beta}
   \left((\beta-3)\cos\frac{\pi}n-2\beta-3\right)$\\
   & $n\geq 5$, $(n,3)=1$ & & \\
$P_8$
   & \rule[-2ex]{0ex}{6ex} $\displaystyle-4\sin^2\frac{\pi}n$,
   & $2(\beta+3)$
   &
$\displaystyle-\frac6{\beta}\left(2\cos\frac{\pi}
n+\beta+2\right)$\\
   & $n\geq 5$, $(n,3)=1$ & & \\
$P_9$
   & \rule[-2ex]{0ex}{6ex} $-3$
   & $2\cos(2\pi/m)-1$,
   &
$\displaystyle{\frac2{\gamma}\left(\gamma^2+2\gamma+2\right)}$\\
   & & $m\geq 7$, $(m,2)=1$ & \\
$P_{10}$
   & \rule[-2ex]{0ex}{6ex} $-3$
   & $2\cos(2\pi/m)-1$,
   & $\gamma^2+4\gamma$\\
   & & $m\geq 8$, $(m,6)=2$ & \\
$P_{11}$
   & \rule[-2ex]{0ex}{6ex} $-3$
   & $2\cos(2\pi/m)$,
   & $2\gamma$\\
   & & $m\geq 7$, $(m,4)\leq 2$ & \\
$P_{12}$ & \rule[-1ex]{0ex}{4ex} $-3$ & $(\sqrt{5}+1)/2$ &
$\sqrt{5}$\\
$P_{13}$ & \rule[-1ex]{0ex}{4ex} $-3$ & $(\sqrt{5}-1)/2$ &
$\sqrt{5}$\\
$P_{14}$ & \rule[-1ex]{0ex}{4ex} $-3$ & $(\sqrt{5}-1)/2$ &
$\sqrt{5}-1$\\
$P_{15}$ & \rule[-1ex]{0ex}{4ex}$(\sqrt{5}-5)/2$ &
$(\sqrt{5}-1)/2$ & $\sqrt{5}$\\
$P_{16}$ & \rule[-1ex]{0ex}{4ex}$(\sqrt{5}-5)/2$ &
$(\sqrt{5}-1)/2$ & $(3\sqrt{5}-1)/2$\\
$P_{17}$ & \rule[-1ex]{0ex}{4ex}$(\sqrt{5}-5)/2$ &
$(\sqrt{5}-1)/2$ & $3(\sqrt{5}+1)/2$\\
$P_{18}$ & \rule[-1ex]{0ex}{4ex}$(\sqrt{5}-5)/2$ &
$(\sqrt{5}+1)/2$ & $3(\sqrt{5}+1)/2$\\
$P_{19}$ & \rule[-1ex]{0ex}{4ex}$(\sqrt{5}-5)/2$ & $\sqrt{5}+2$ &
$(5\sqrt{5}+9)/2$\\
\hline
\end{longtable}}

\begin{rem}
Note that if a formula in \fullref{table_param} involves
$u\in\mathcal{U}$
such that $(t(u),2)=1$, then $t(u)$ is finite and odd,
while for $u\in\mathcal{U}$ with $(t(u),2)=2$, $t(u)$ can be not
only finite (and even),
but $\infty$ or $\overline\infty$, which implies that the formula
is applicable also to $u\geq 0$.
In general, if $(m,k)<k$, then $m$ is finite.
\end{rem}

Let $f$ and $g$ be as in \fullref{param_intersect}, that is,
let $f$ be a primitive elliptic element of order $n\geq 3$, $g$
be a hyperbolic element and let their axes intersect
non-orthogonally. Let $\omega$ be the
plane containing $f$ and $g$, and let $e$ be a half-turn whose
axis is orthogonal to $\omega$ and passes through the point of
intersection of $f$ and $g$.

Again,  we define two finite
index extensions of $\Gamma=\langle f,g \rangle$ as follows:
$\widetilde{\Gamma}=\langle f,g,e\rangle$ and
$\Gamma^*=\langle f,g,e,R_\omega\rangle$.

Let $e_f$ and $e_g$ be half-turns such that $f=e_fe$ and
$g=e_ge$.
The lines $e_f$ and $e$ lie in a plane, denote it by
$\varepsilon$, and
intersect at an angle of $\pi/n$; $\varepsilon$ and $\omega$ are
mutually orthogonal; $e_g$ is orthogonal to $\omega$ and
intersects
$g$.

Let $\alpha$ be a hyperbolic plane such that $f=R_\omega
R_\alpha$
and let $\alpha'=e_g(\alpha)$.
There exists a plane $\delta$ which is orthogonal to the planes
$\alpha$,
$\omega$ and $\alpha'$. The plane $\delta$ passes
through the
common perpendicular to $f$ and $e_g(f)$ orthogonally to
$\omega$. It
is clear that $e_g\subset\delta$.

From here on, we describe the cases of even $n$
and odd $n$ separately ($n$ is the order of the elliptic
generator $f$).

{\bf $n\geq 4$ is even.} \qua
Let $\P=\P(\alpha,\omega,\alpha',\delta,\varepsilon)$.
The polyhedron $\P$ can be compact or non-compact;
in \fullref{fp_even}(a), $\P$ is drawn as compact.

\begin{figure}[htp!]
\centering
\begin{tabular}{ccc}
\labellist
\small
\pinlabel {$\dfrac{\pi}{n}$} at 5 115
\pinlabel {$f$} at 5 85
\pinlabel {$g$} at 45 90
\pinlabel {$\delta$} at 50 135
\pinlabel {$e_g$} at 82 130 
\pinlabel {$\dfrac{2\pi}{m}$} at 90 90 
\pinlabel {$\dfrac{\pi}{n}$} at 125 110
\pinlabel {$\omega$} at 130 62
\pinlabel {$\varepsilon$} at 125 42
\pinlabel {$\alpha$} at 80 25
\pinlabel {$\alpha'$} at 105 27
\pinlabel {\turnbox{45}{$\pi/l$}} at 130 20
\endlabellist
\includegraphics[width=4 cm]{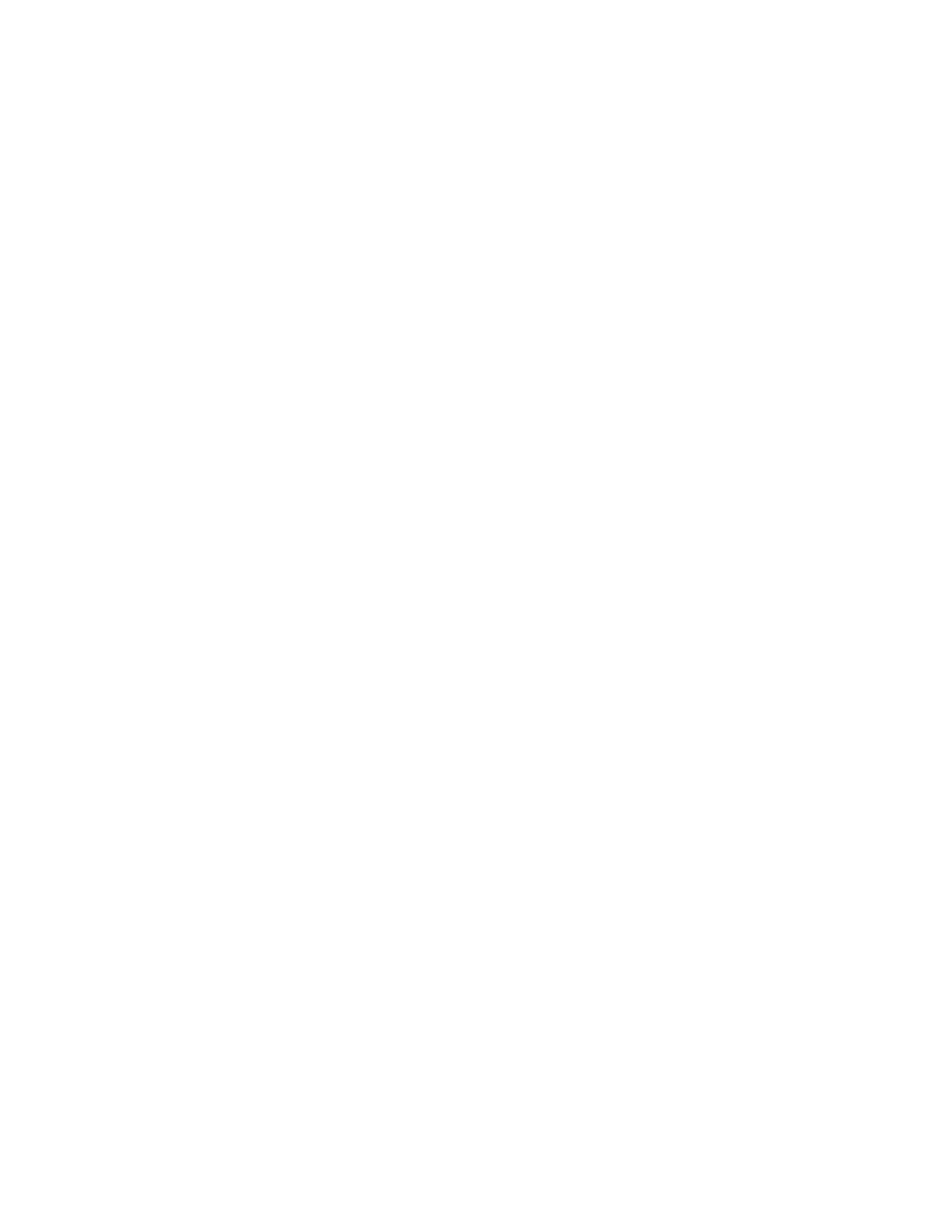} &
\labellist
\small
\pinlabel {$\dfrac{\pi}{n}$} at 5 100
\pinlabel {$\delta$} at 50 140
\pinlabel {$e_g$} at 77 128 
\pinlabel {$\dfrac{\pi}{m}$} at 60 90 
\pinlabel {$\omega$} at 22 67
\pinlabel {$\varepsilon$} at 44 48
\pinlabel {$\alpha$} at 77 28
\pinlabel {$\xi$} at 95 32
\pinlabel {$\dfrac{\pi}{k}$} at 110 30
\endlabellist
\includegraphics[width=3.05 cm]{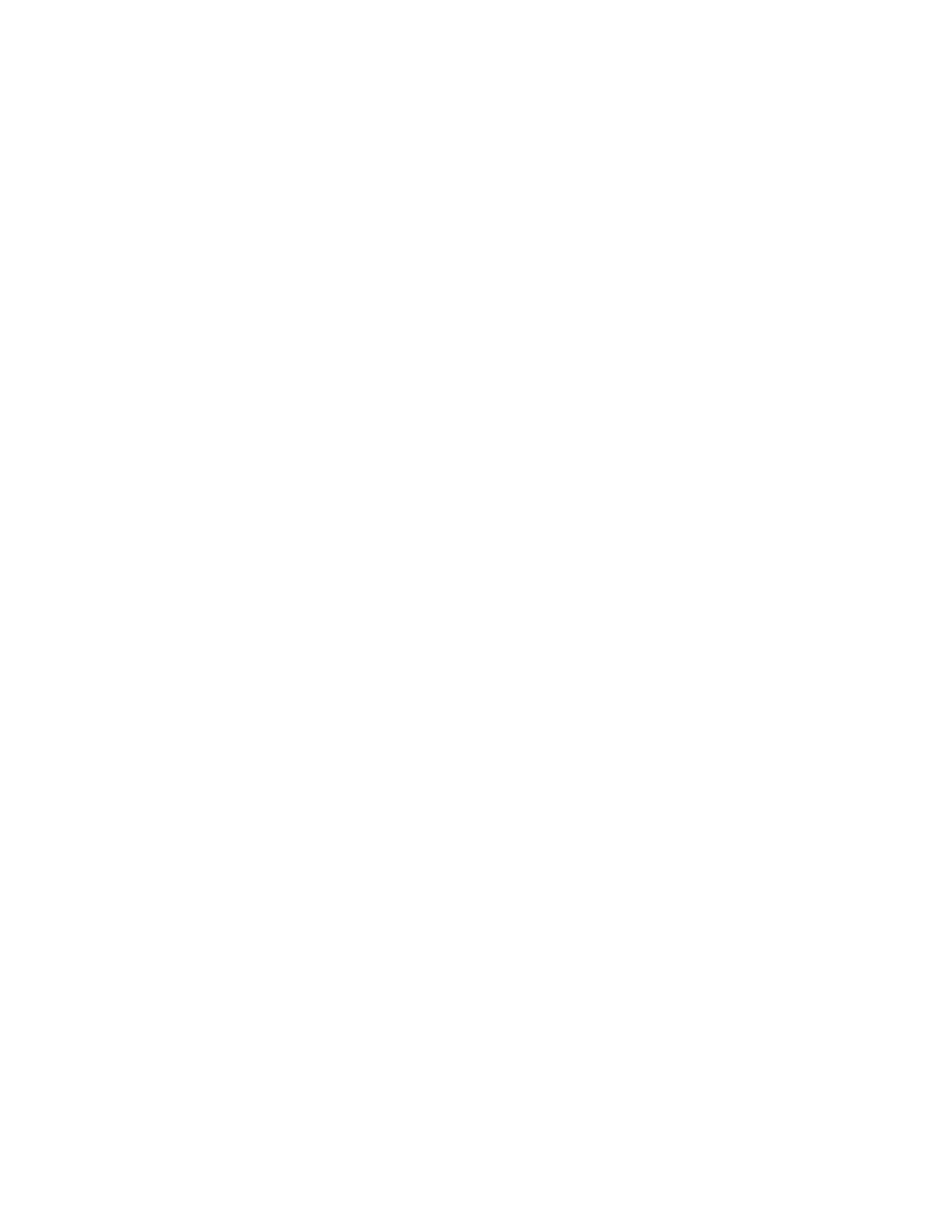} &
\labellist
\small
\pinlabel {$\dfrac{\pi}{n}= \dfrac{\pi}{4}$} at 5 110
\pinlabel {$\delta$} at 67 145
\pinlabel {$e_g$} at 104 124
\pinlabel {$\dfrac{2\pi}{m}$} at 135 26
\pinlabel {$\dfrac{\pi}{m}$} at 88 76
\pinlabel {$\varepsilon$} at 58 55
\pinlabel {$2$} at 77 113
\pinlabel {$2$} at 115 102
\pinlabel {$3$} at 53 85
\endlabellist
\includegraphics[width=3.8 cm]{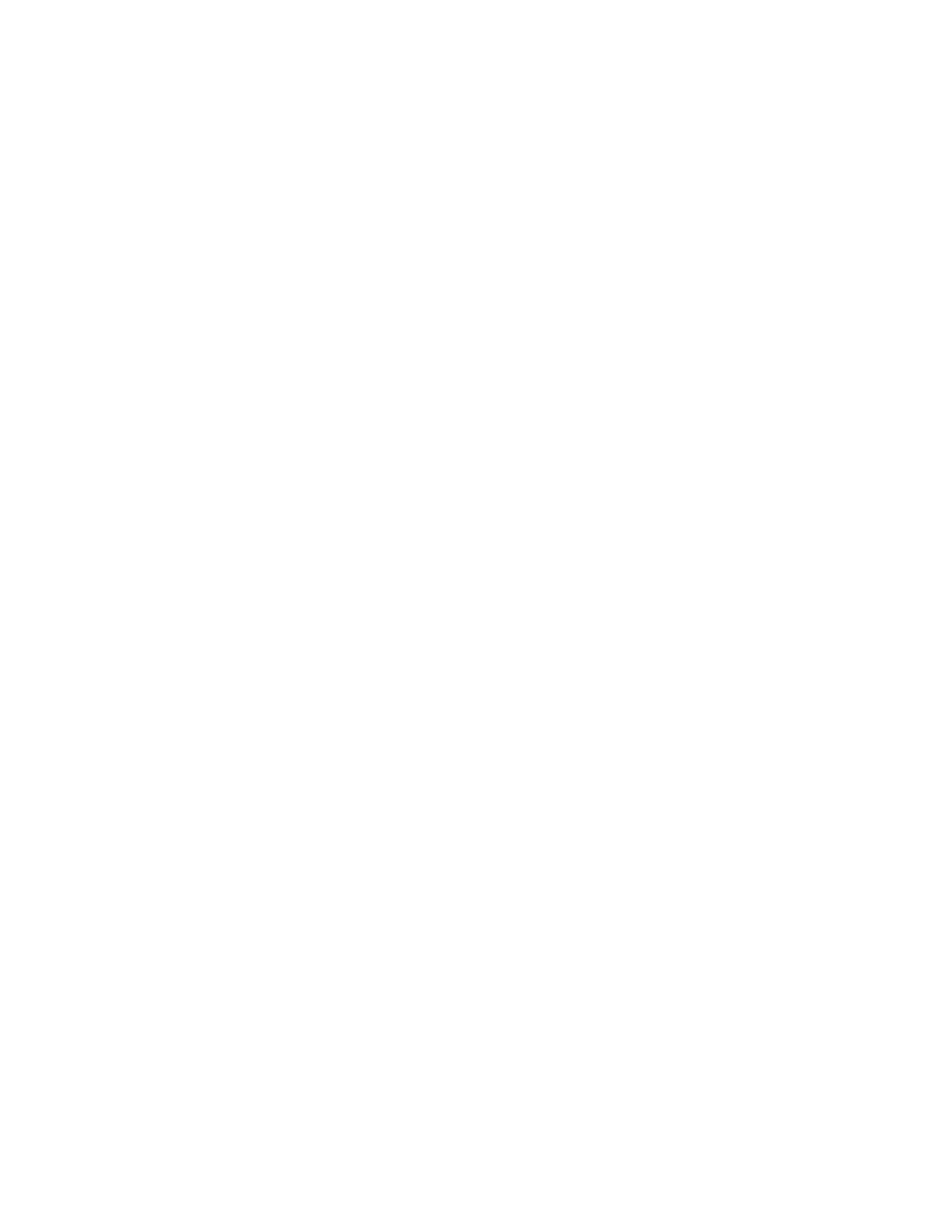}\\
(a): $P_1$ & (b): $P_2$ & (c): $P_3$
\end{tabular}
\caption{Fundamental polyhedra for $\Gamma^*$ in
the case of even~$n$ ($0<\gamma<-\beta\beta'/4$)}\label{fp_even}
\end{figure}

The polyhedron $\P$ has five right dihedral angles;
the dihedral angles formed by $\omega$ with $\alpha$ and
$\alpha'$
equal $\pi/n$.
The planes $\alpha$ and $\alpha'$ can either intersect
or be parallel or disjoint; the same is true for
$\varepsilon$ and $\alpha'$.
Denote the angle between $\varepsilon$ and $\alpha'$
by $\pi/\ell$, where
$\ell\in(2,\infty)\cup\lbrace\infty,\overline\infty\rbrace$
and denote the angle between $\alpha$ and $\alpha'$
by $2\pi/m$, where
$m\in(2,\infty)\cup\lbrace\infty,\overline\infty\rbrace$,
$1/n+1/m<1/2$.

For each triple of parameters with $n$ even in
\fullref{table_param},
we know (from the paper \cite{KK02}) how a fundamental polyhedron for
$\Gamma^*$ looks like,
and we describe all such polyhedra below.

$\mathbf P_1$. \qua
$\P$ is a fundamental polyhedron for $\Gamma^*$ if and only if
$m\in{\mathbb Z}\cup\{\infty,\overline\infty\}$, $m$ is even
($1/m+1/n<1/2$) and
$\ell\in{\mathbb Z}\cup\{\infty,\overline\infty\}$
($\ell\geq 3$). In terms of the function $t$,
$m=t(u)$ and $\ell=t(v)$ (cf \fullref{table_param}).

$\mathbf P_2$. \qua
Note that in this case
$m=t(u)$ is finite and odd.
Let $\xi$ be the bisector of the dihedral angle of $\P$ at the
edge
$\alpha\cap \alpha'$. It is clear that $\xi$ passes through
$e_g$ and is orthogonal to $\omega$.
The polyhedron $\P(\alpha,\delta,\xi,\varepsilon,\omega)$ is
bounded by
reflection planes of $\Gamma^*$ (see \fullref{fp_even}(b))
and,
therefore, it is a fundamental polyhedron for $\Gamma^*$ if and
only if
$\xi$ and $\varepsilon$
intersect at an angle of $\pi/k$, where $k\geq 3$, or
are parallel or disjoint ($k=\infty$ or $k=\overline\infty$,
respectively).
In \fullref{table_param}, $k=t(v)$ for the parameters $P_2$.

$\mathbf P_3$. \qua In this case $n=4$ and the dihedral angle
of $\P(\alpha,\delta,\xi,\varepsilon,\omega)$ at the edge
$\xi\cap\varepsilon$ is $2\pi/m$, where $m=t(u)$ is odd,
$5\leq m<\infty$.
The polyhedron $\P(\alpha,\delta,\xi,\varepsilon,\omega)$ is
decomposed by reflection planes of $\Gamma^*$ into three
(possibly infinite volume)
tetrahedra $T[2,2,4;2,3,m]$, each of which is a
fundamental polyhedron for $\Gamma^*$ (see
\fullref{fp_even}(c)).

{\bf $n\geq 3$ is odd.} \qua
Denote $e_1=f^{(n-1)/2}e$.
Note that $e_1$ makes angles of $\pi/(2n)$ with $\alpha$
and $\omega$.

We can forget about the plane $\varepsilon$, because now we need
another plane,
denote it by $\zeta$, for the
construction of a fundamental polyhedron for $\Gamma^*$.
To construct $\zeta$ we use an auxiliary plane $\kappa$ that
passes
through $e_1$ orthogonally to $\alpha'$. The plane $\zeta$ then
passes through $e_1$ orthogonally to $\kappa$.
(Note that $\zeta$ is not orthogonal
to each of the planes $\alpha$ and $\omega$ if $m\neq 2n$.)
In fact, the planes $\zeta$ and $\alpha'$ can either
intersect or be parallel or disjoint. Note that if
$\zeta\cap \alpha'\not=\emptyset$ then
$e_1$ is orthogonal to $\zeta\cap \alpha'$.
Let $\P=\P(\alpha,\omega,\alpha',\delta,\zeta)$.
In \fullref{fp_odd}(a), $\P$ is drawn for the compact case.

Consider the dihedral angles of $\P$. The angles between $\delta$
and
$\omega$, $\delta$ and $\alpha$, $\delta$ and $\alpha'$ are all
of
$\pi/2$; the angles formed by $\omega$ with $\alpha$ and
$\alpha'$
equal $\pi/n$; since $\zeta$ passes through $e_1$, which is orthogonal
to $f$, the sum of the angles $\phi$ and $\psi$ 
formed by $\zeta$ with $\alpha$ and $\omega$,
respectively, equals $\pi$. The planes $\alpha$ and
$\alpha'$ can either intersect or be parallel or disjoint. The
same is true for $\zeta$ and $\alpha'$. Denote the angle between
$\alpha$ and $\alpha'$ by $2\pi/m$ and the angle between
$\alpha'$ and $\zeta$ by $\pi/(2\ell)$.

Fundamental polyhedra for groups $\Gamma^*$
for all triples of parameters with
odd $n$ from \fullref{table_param} were constructed in
\cite{KK04}.
Now we describe them.

\begin{figure}[ht!]
\centering
\begin{tabular}{cc}
\labellist
\small
\pinlabel {$\dfrac{\pi}{n}$} at 5 115
\pinlabel {$f$} at 5 85
\pinlabel {$\phi$} at 30 43
\pinlabel {$\psi$} at 47 70
\pinlabel {$\alpha$} at 28 117 
\pinlabel {$e_g$} at 64 130
\pinlabel {$\delta$} at 86 130
\pinlabel {$\alpha'$} at 112 115
\pinlabel {$\dfrac{\pi}{n}$} at 146 116
\pinlabel {$\omega$} at 129 83
\pinlabel {$\zeta$} at 139 53
\pinlabel {$e_1$} at 76 36
\pinlabel {$\dfrac{2\pi}{m}$} at 80 80 
\pinlabel {\turnbox{30}{$\pi/2l$}} at 140 27
\endlabellist
\includegraphics[width=4.5 cm]{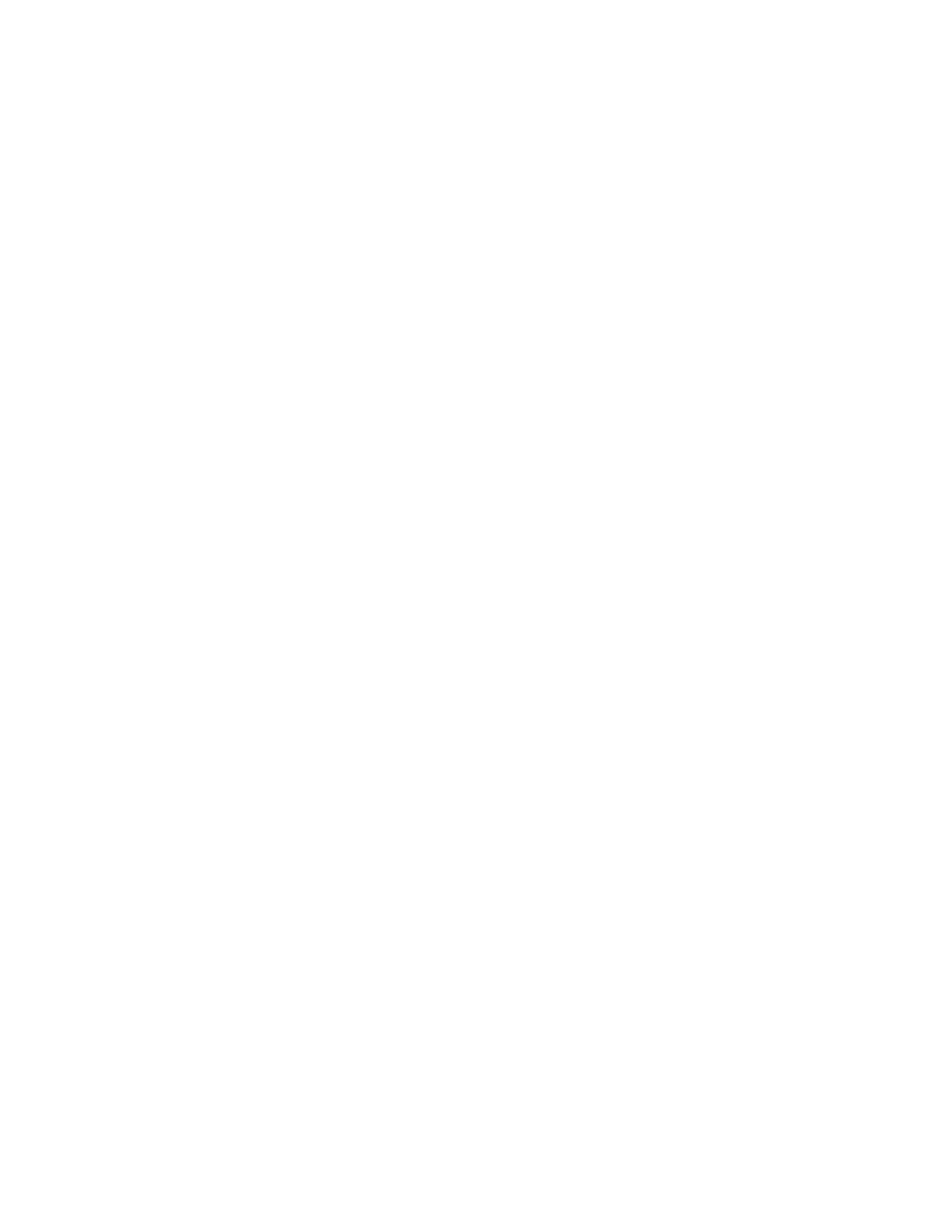}\qquad &
\qquad 
\labellist
\small
\pinlabel {$\dfrac{\pi}{n}$} at 3 70
\pinlabel {$\phi_1$} at 34 23
\pinlabel {$\alpha$} at 26 88 
\pinlabel {$e_g$} at 83 85 
\pinlabel {$\delta$} at 74 103
\pinlabel {$\dfrac{\pi}{m}$} at 69 64
\pinlabel {$\omega$} at 26 55
\pinlabel {$\zeta_1$} at 103 38
\pinlabel {$e_1$} at 72 27 
\pinlabel {$\xi$} at 93 89
\pinlabel {$\psi_1$} at 53 50
\pinlabel {\turnbox{45}{$\pi/2k$}} at 115 15
\endlabellist
\includegraphics[width=4.5 cm]{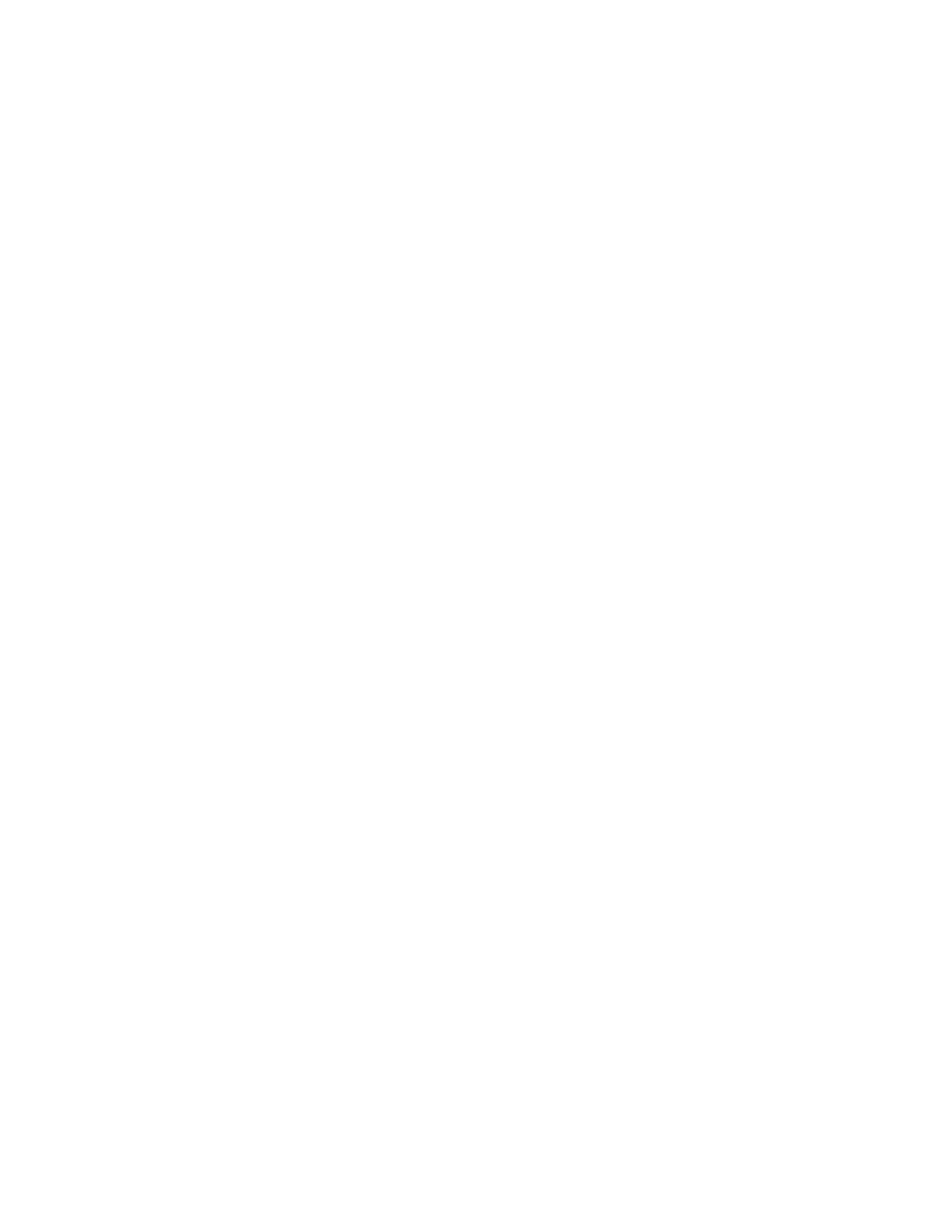}\\
(a):  $P_4$\qquad & \qquad(b): $P_5$\\
\end{tabular}
\caption{}\label{fp_odd}
\end{figure}

$\mathbf P_4$. \qua
$\P$ itself is a fundamental polyhedron for $\Gamma^*$ if and
only if
$m$ is even ($1/m+1/n<1/2$), $m=\infty$ or $m=\overline\infty$,
and
$\ell\in{\mathbb Z}\cup\{\infty,\overline\infty\}$, $\ell\geq 2$.
In terms of the function $t$,
$m=t(u)$ and $\ell=t(v)$ (cf \fullref{table_param}).

$\mathbf P_5$. \qua
Let $\xi$ be the bisector of the dihedral angle of $\P$ at the
edge
$\alpha\cap \alpha'$. Clearly, $\xi$ passes through
$e_g$ orthogonally to $\omega$ and $\delta$.
Construct a plane $\zeta_1$ in a similar way as $\zeta$ above
(now $\xi$ plays the role of $\alpha'$).
The polyhedron $\Q=\P(\alpha,\delta,\xi,\zeta_1,\omega)$ is a
fundamental
polyhedron for $\Gamma^*$ (see \fullref{fp_odd}(b))
if and only if $m$ is odd and $\xi$ and $\zeta_1$
make an angle of $\pi/(2k)$, where $k\geq 2$ is an integer,
$\infty$ or $\overline\infty$.

$\mathbf P_6$. \qua
In this case, the dihedral angle of $\Q$ at the edge
$\alpha\cap\xi$
equals $2\pi/n$ (ie $m=n/2$), $n\geq 7$ is odd.
Let $\rho$ be the bisector of this dihedral angle and
let $\tau=R_\rho(\omega)$. The bisector $\rho$ makes an angle
of $\pi/3$ with $\omega$ and, therefore, so does $\tau$.
It is clear that then $\tau$ is orthogonal to $\alpha$
(in $\delta$, 
we have one of Knapp's triangles with one non-primitive and two 
primitive angles leading to a discrete group \cite{Kna68}).
Construct a plane $\zeta_2$ similarly to the planes $\zeta$ and
$\zeta_1$
above (but using $\tau$).
The polyhedron $\P(\alpha,\delta,\omega,\tau,\zeta_2)$
(see \fullref{p1_hyp}(a), where we show also a part of the
plane
$\delta$)
is a fundamental polyhedron for $\Gamma^*$ if and only if
$\zeta_2$ and $\tau$ intersect at an angle of $\pi/(2k)$, where
$k\geq 2$ is an integer, or are parallel or disjoint ($k=\infty$
or $k=\overline\infty$, respectively). In
\fullref{table_param},
$t(v)$ corresponds to $k$.

$\mathbf P_9$. \qua
The dihedral angles of $\Q$ at the edges $\alpha\cap\xi$
and $\zeta_1\cap\xi$ equal $\pi/m$, $m$ is odd. The plane
$\zeta_1$ makes dihedral angles of $2\pi/3$ and $\pi/3$ with
$\alpha$ and $\omega$, respectively. Let $\sigma$ be the bisector
of the dihedral angle of $\Q$ at $\alpha\cap\zeta_1$. It is clear
that
$\sigma$ is orthogonal to $\omega$.
$\P(\alpha,\omega,\xi,\delta,\mu)$, where $\mu$ is the plane
that passes through $\sigma\cap\omega$ orthogonally to $\alpha$,
is a fundamental polyhedron for $\Gamma^*$
(see \fullref{p1_hyp}(b)).
The dihedral angle of $\P(\alpha,\omega,\xi,\delta,\mu)$ at
$\mu\cap\omega$ equals $\pi/4$.

\begin{figure}[ht!]
\centering
\begin{tabular}{cc}
\labellist
\small
\pinlabel {$\alpha$} at 15 70
\pinlabel {$\dfrac{\pi}{n}$} at 3 50
\pinlabel {$\omega$} at 16 39 
\pinlabel {$\psi_2$} at 35 37
\pinlabel {$e_1$} at 49 19
\pinlabel {$\phi_2$} at 25 15
\pinlabel {\turnbox{45}{$\pi/2k$}} at 78 10
\pinlabel {$\zeta_2$} at 68 28 
\pinlabel {$\tau$} at 59 57 
\pinlabel {$\delta$} at 43 80
\pinlabel {$\dfrac{\pi}{3}$} at 77 58
\pinlabel {$e_g$} at 76 82
\endlabellist
\includegraphics[width=5.2 cm]{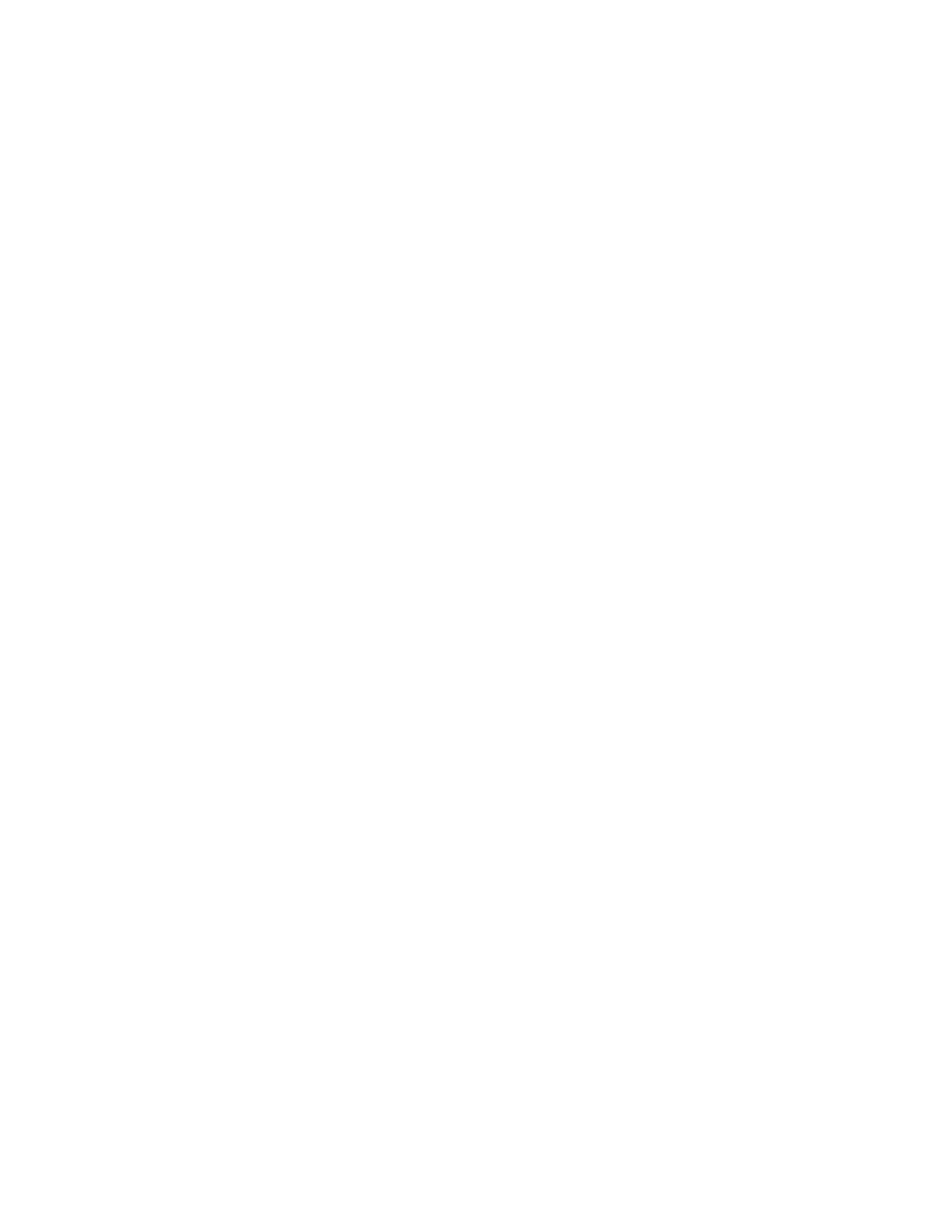}\qquad &
\qquad 
\labellist
\small
\pinlabel {\turnbox{-70}{$\pi/m$}} at 15 82
\pinlabel {$\dfrac{\pi}{3}$} at 4 61
\pinlabel {$\xi$} at 26 98
\pinlabel {$2$} at 21 54
\pinlabel {$2$} at 47 70
\pinlabel {$4$} at 48 51
\pinlabel {$\sigma$} at 60 59 
\pinlabel {$2$} at 61 49 
\pinlabel {$2$} at 77 56
\pinlabel {$2$} at 81 26 
\pinlabel {$\zeta_1$} at 102 43
\pinlabel {\turnbox{-45}{$2\pi/3$}} at 23 15
\pinlabel {$e_1$} at 45 28
\pinlabel {$\alpha$} at 54 13
\pinlabel {\turnbox{10}{$\pi/3$}} at 86 50
\pinlabel {\turnbox{30}{$\pi/m$}} at 94 17
\endlabellist
\includegraphics[width=4.3 cm]{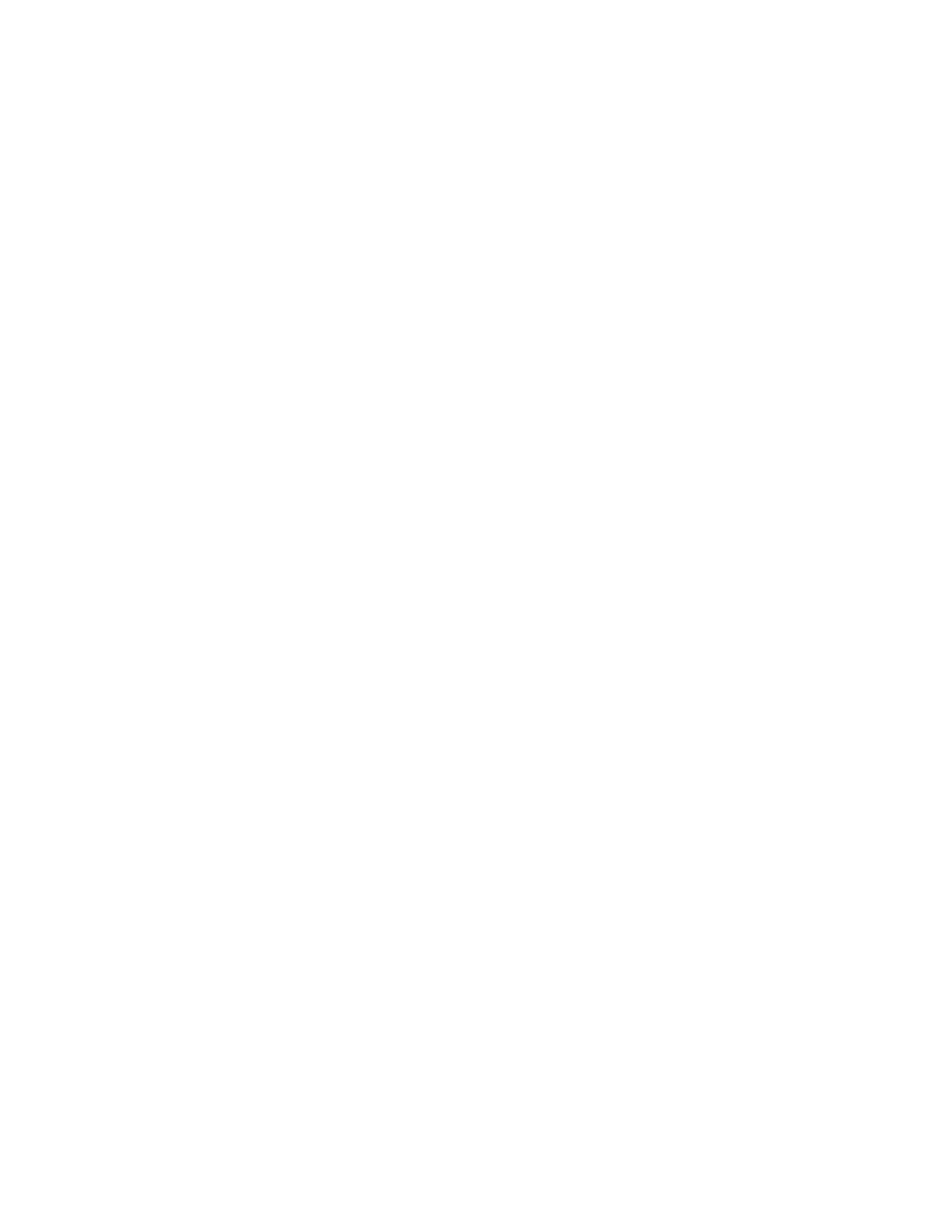}\\
(a): $P_6$\qquad &\qquad (b): $P_9$\\
\end{tabular}
\caption{}\label{p1_hyp}
\end{figure}

Fundamental polyhedra for remaining discrete groups $\Gamma^*$
are
obtained after decomposition of $\P$ (which is shown in \fullref{fp_odd}(a))
by the planes of reflections from~$\Gamma^*$,
that is, $(m,2)=2$, $m\geq 4$ and $\ell$ is fractional. We first
consider
the cases where $R_\zeta\in\Gamma^*$.

A compact convex polyhedron in ${\mathbb H}^3$ whose skeleton
is a trivalent graph is
uniquely determined by its dihedral angles up to isometry of
${\mathbb H}^3$, see Hodgson and Rivin \cite{HR93}.
Given $n$, $m$ and $\ell$, all the dihedral angles of the polyhedron
$\P\cup e_1(\P)$ are defined. Therefore, the dihedral angle $\phi$
of $\P$ at $\alpha\cap\zeta$ can be obtained.
So to determine a compact $\P$ it is sufficient
to indicate only $n$, $m$ and $\ell$, but we shall also give
the value of $\phi$ for convenience.
If $\P$ has infinite volume, but $\ell<\infty$ and $m<\infty$
(then $2/m+1/n+1/\ell < 1$),
$\P$ is also determined by the values of $n$, $m$ and $\ell$,
since we can cut off a compact polyhedron from $\P\cup e_1(\P)$
by a plane orthogonal to $\zeta$, $\alpha$, $\alpha'$ and a plane
orthogonal to $\zeta$, $\omega$, $\alpha'$.

There are no discrete groups for which $m=\infty$ or
$\ell\geq \infty$ except for those with parameters of type $P_4$.
When $m=\overline\infty$ we also indicate the distance $d$
between $\alpha$ and $\alpha'$ to determine~$\P$.
In fact, given $d$, one can find $\phi$, but we shall give
$\phi$ explicitly for convenience.

In all of these cases $R_\delta\not\in\Gamma^*$, so
we do not show $\delta$ (but indicate $e_g$) in figures in order
to simplify
the picture.
By the same reason we draw only those parts of the decomposition
(including $\omega$) that are important for the reconstruction of
the action
of $\Gamma^*$ and help to determine positions of $e_1$ and $e_g$.

\begin{figure}[ht!]
\centering
\begin{tabular}{cc}
\labellist
\small
\pinlabel {$2$} at 36 84
\pinlabel {$\pi/3$} at 2 65
\pinlabel {$r$} at 23 73
\pinlabel {$2$} at 46 73
\pinlabel {$\tau$} at 90 77
\pinlabel {$\kappa$} at 102 78
\pinlabel {$\pi/3$} at 125 91
\pinlabel {$s'$} at 48 62
\pinlabel {$u$} at 99 68
\pinlabel {$2$} at 23 52
\pinlabel {$4$} at 53 52
\pinlabel {$e_g$} at 68 51
\pinlabel {$s$} at 77 53
\pinlabel {$3$} at 97 55
\pinlabel {$\pi/3$} at 41 40
\pinlabel {$2$} at 99 39 
\pinlabel {$2$} at 110 43
\pinlabel {$\zeta$} at 118 45
\pinlabel {$\rho$} at 122 52
\pinlabel {$e_1$} at 54 26
\pinlabel {$2$} at 97 29
\pinlabel {\turnbox{-35}{$2\pi/3$}} at 23 22
\pinlabel {$t_1$} at 50 10
\pinlabel {\turnbox{50}{$2\pi/r$}} at 118 16
\endlabellist
\includegraphics[width=5.8 cm]{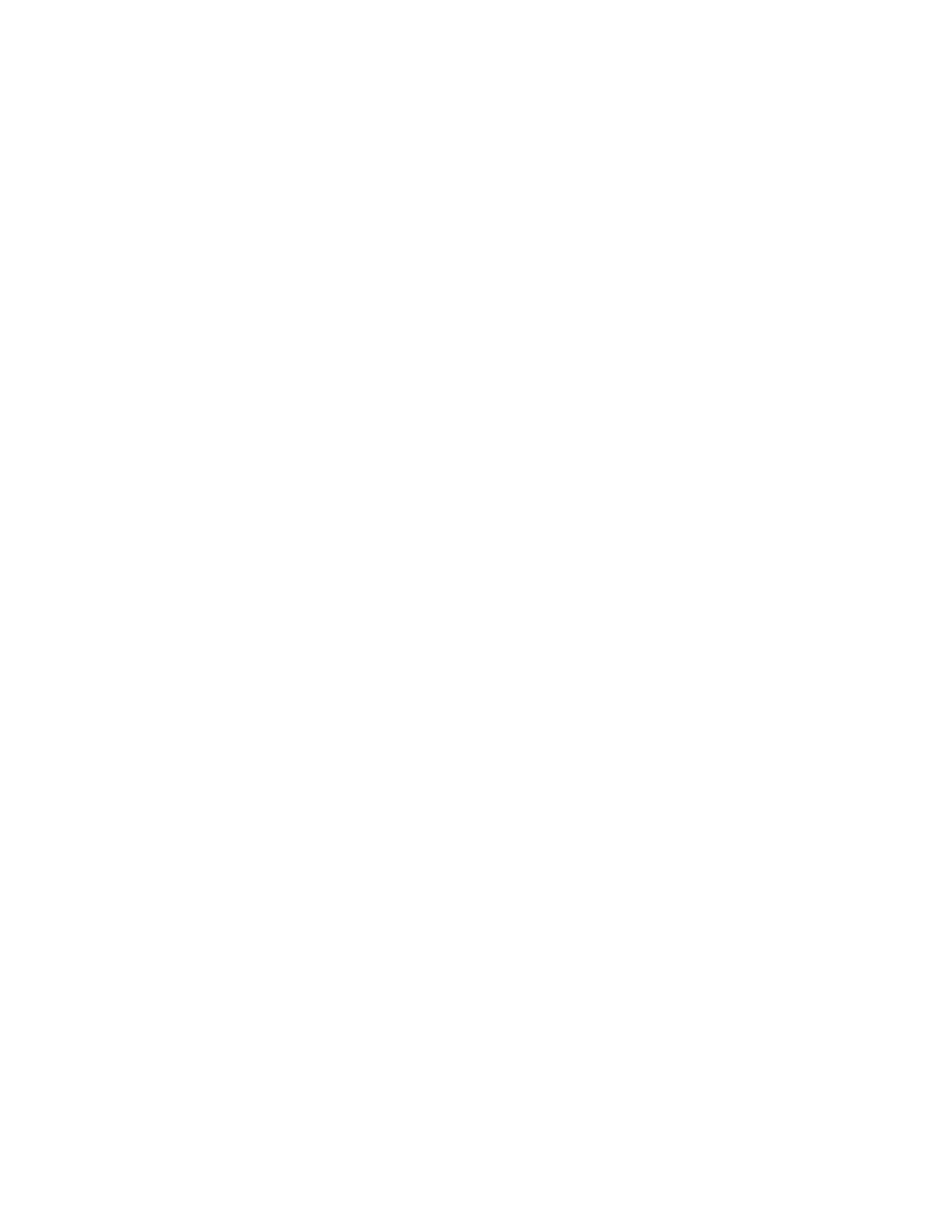}
& 
\labellist
\small
\pinlabel {$\alpha$} at 14 96
\pinlabel {$A$} at 77 114
\pinlabel {$\dfrac{\pi}{3}$} at 92 101
\pinlabel {$\dfrac{\pi}{3}$} at 4 88
\pinlabel {$5$} at 34 86
\pinlabel {$e_g$} at 45 78
\pinlabel {$5$} at 92 83
\pinlabel {$\omega$} at 107 85
\pinlabel {$V$} at 9 67
\pinlabel {$2$} at 39 57
\pinlabel {$2$} at 81 69
\pinlabel {$3$} at 94 64
\pinlabel {$\zeta$} at 119 68
\pinlabel {$\dfrac{4\pi}{5}$} at 18 47
\pinlabel {$2$} at 54 45
\pinlabel {$e_1$} at 77 43
\pinlabel {$C$} at 90 40 
\pinlabel {$\dfrac{\pi}{3}$} at 103 42
\pinlabel {$B$} at 127 53
\pinlabel {$3$} at 47 37
\pinlabel {$B'$} at 77 13
\endlabellist
\includegraphics[width=5.9 cm]{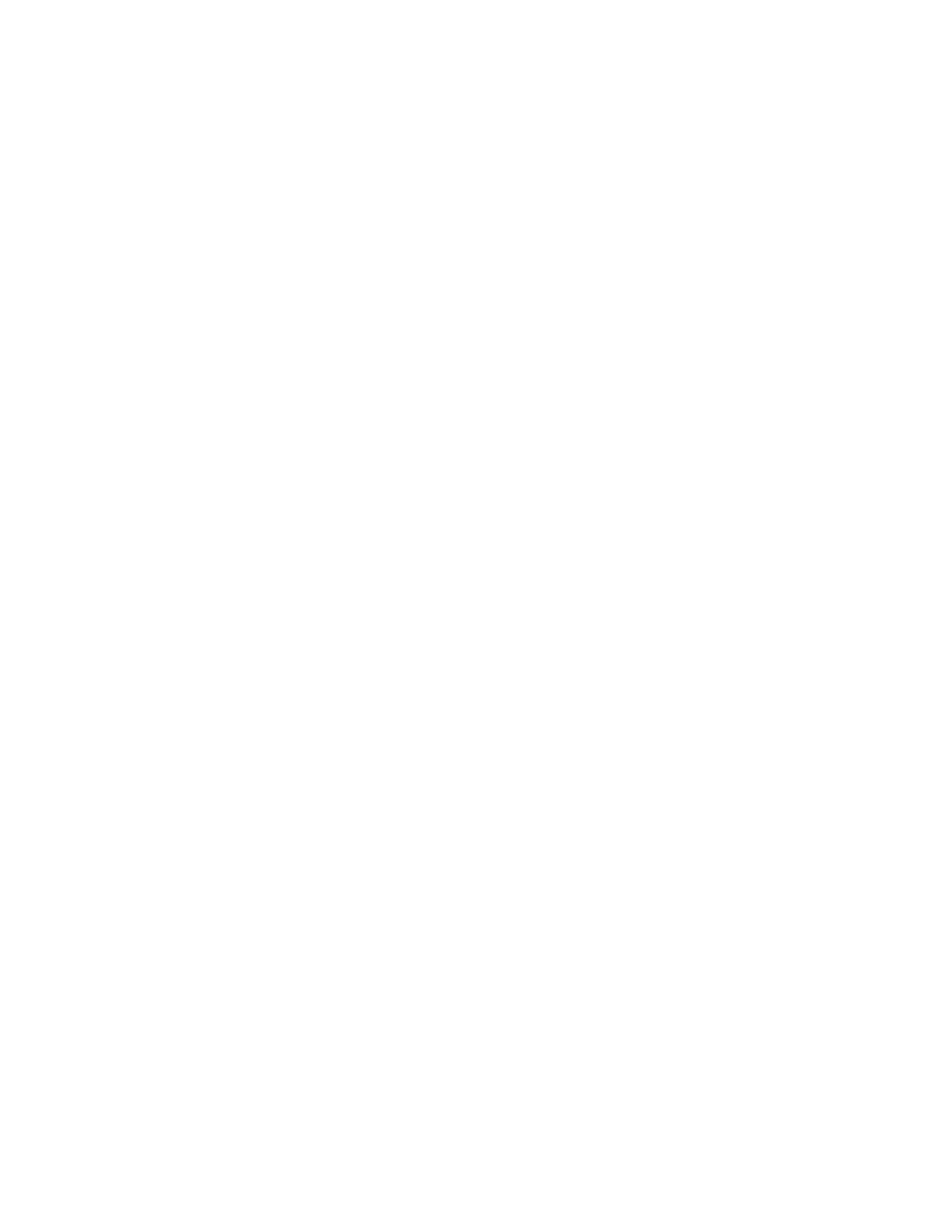}\\
(a): $P_{11}$ & (b): $P_{12}$ \\
\end{tabular}
\caption{}\label{gamma9-16}
\end{figure}

$\mathbf P_{11}$. \qua
$n=3$, $m=\overline\infty$, $\ell=r/4$, $(r,4)\leq 2$, $r\geq 7$,
$\phi=2\pi/3$
and $\cosh d=2\cos^2(\pi/r)-1/2$.
$\P(\alpha,\alpha',\omega,\zeta)$
is decomposed into tetrahedra $T=T[2,3,r;2,2,4]$ each of which
is a fundamental polyhedron for $\Gamma^*$
(\fullref{gamma9-16}(a)).
$\Gamma^*=G_T$.

$\mathbf P_{12}$. \qua
$n=3$, $m=\overline\infty$, $\ell=3/2$, $\phi=4\pi/5$
and $\cosh d=(3+\sqrt{5})/4$.
$\P(\alpha,\alpha',\omega,\zeta)$
is decomposed into tetrahedra $T=T[2,2,3;2,5,3]$.
A half of $T$ is a fundamental polyhedron for $\Gamma^*$
(\fullref{gamma9-16}(b)).
$\Gamma^*=\langle G_T,e_g\rangle$.

\begin{figure}[htp!]
\centering
\begin{tabular}{cc}
\labellist
\small
\pinlabel {$\dfrac{\pi}{3}$} at 3 70
\pinlabel {$\pi/5$} at 40 77
\pinlabel {$5$} at 62 78
\pinlabel {$\dfrac{\pi}{3}$} at 84 95
\pinlabel {$3$} at 21 50
\pinlabel {$e_g$} at 42 46
\pinlabel {$5$} at 64 61
\pinlabel {$3$} at 74 57
\pinlabel {$2$} at 81 62
\pinlabel {$2$} at 84 51
\pinlabel {$2$} at 92 42
\pinlabel {$2$} at 87 29
\pinlabel {\turnbox{-35}{$3\pi/5$}} at 19 21
\pinlabel {$e_1$} at 34 26
\pinlabel {$2$} at 47 19
\pinlabel {$2$} at 78 18
\pinlabel {\turnbox{30}{$2\pi/3$}} at 88 12
\endlabellist
\includegraphics[width=5.5 cm]{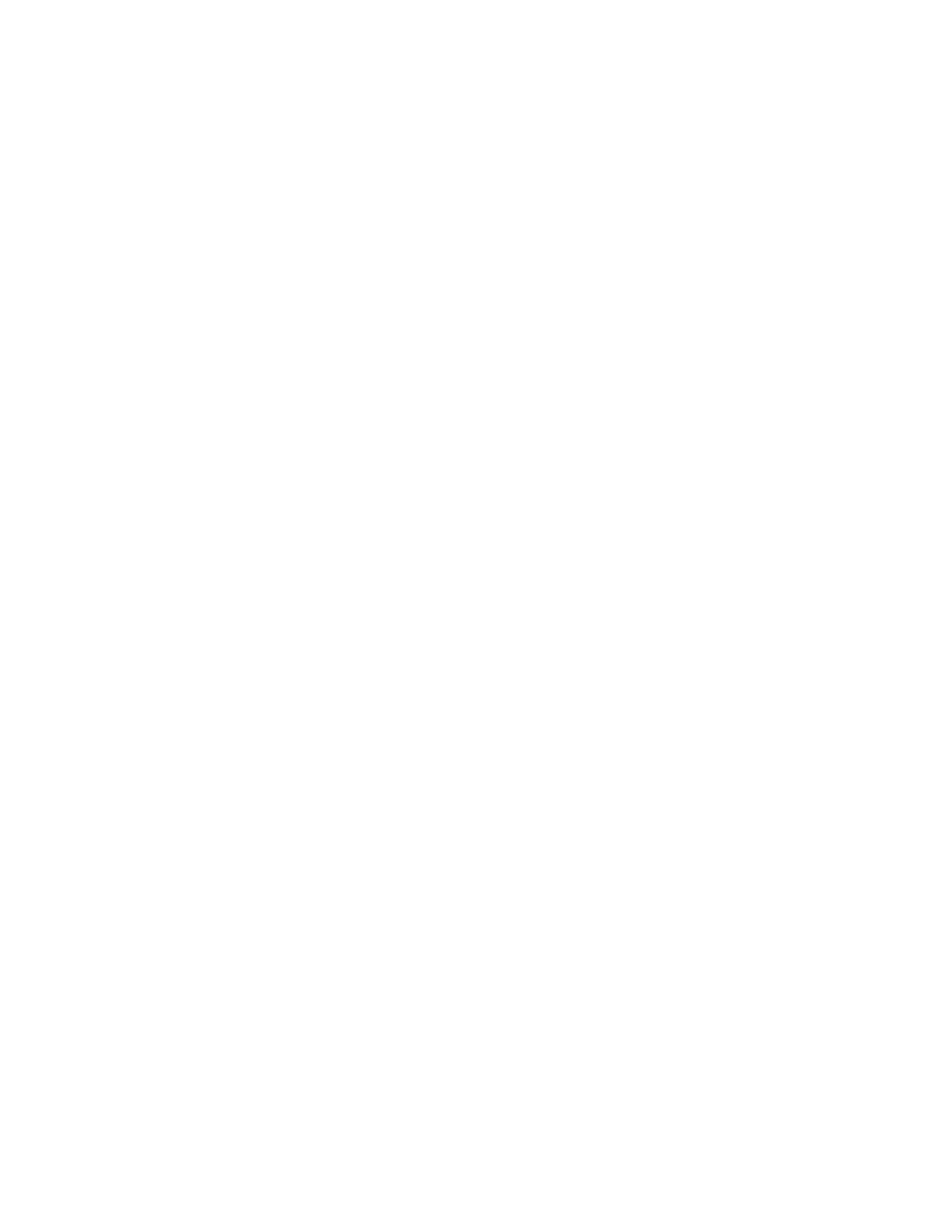} \qquad &
\qquad
\labellist
\small
\pinlabel {$\dfrac{\pi}{3}$} at 3 65
\pinlabel {$\rho$} at 25 61
\pinlabel {$2$} at 28 71
\pinlabel {$\dfrac{\pi}{5}$} at 52 66
\pinlabel {$4$} at 76 63
\pinlabel {$\dfrac{\pi}{3}$} at 100 83
\pinlabel {$\pi/3$} at 46 34
\pinlabel {$e_g$} at 59 40
\pinlabel {$3$} at 81 51
\pinlabel {$2$} at 93 48
\pinlabel {$e_1$} at 39 21
\pinlabel {\turnbox{-30}{$2\pi/3$}} at 26 13
\pinlabel {\turnbox{35}{$2\pi/5$}} at 89 11
\endlabellist
\includegraphics[width=5.0 cm]{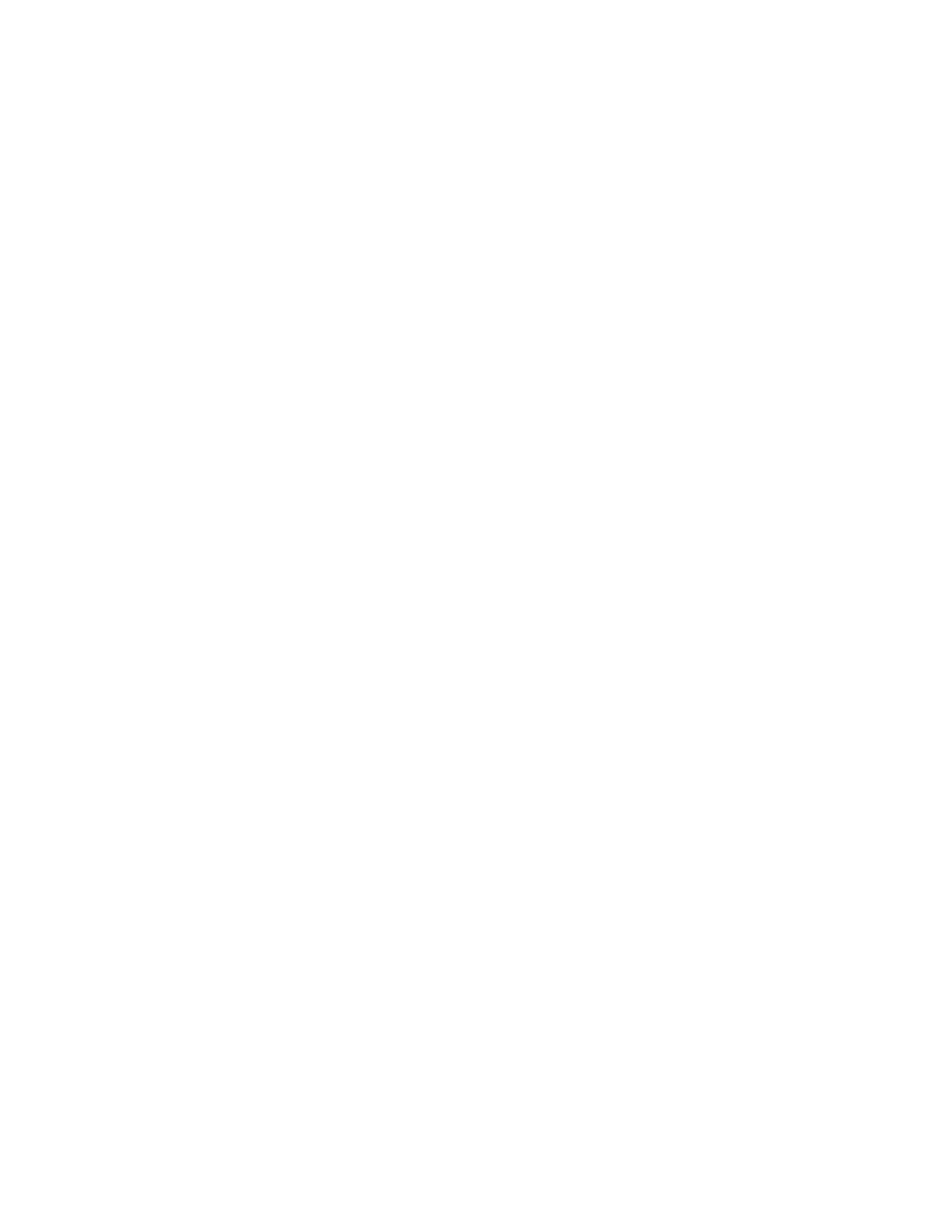}\\
(a): $P_{13}$\qquad & \qquad (b): $P_{14}$\\
\labellist
\small
\pinlabel {$\dfrac{\pi}{5}$} at 2 65
\pinlabel {$\dfrac{\pi}{5}$} at 96 80
\pinlabel {$e_g$} at 55 56
\pinlabel {\turnbox{10}{$4\pi/5$}} at 68 54
\pinlabel {$3$} at 55 44
\pinlabel {$2$} at 68 47
\pinlabel {$e_1$} at 37 35
\pinlabel {$2$} at 50 32
\pinlabel {$2$} at 73 39
\pinlabel {\turnbox{70}{$\pi/3$}} at 87 32
\pinlabel {\turnbox{-40}{$\pi/5$}} at 36 20
\pinlabel {$\dfrac{\pi}{2}$} at 60 25
\endlabellist
\includegraphics[width=5.5 cm]{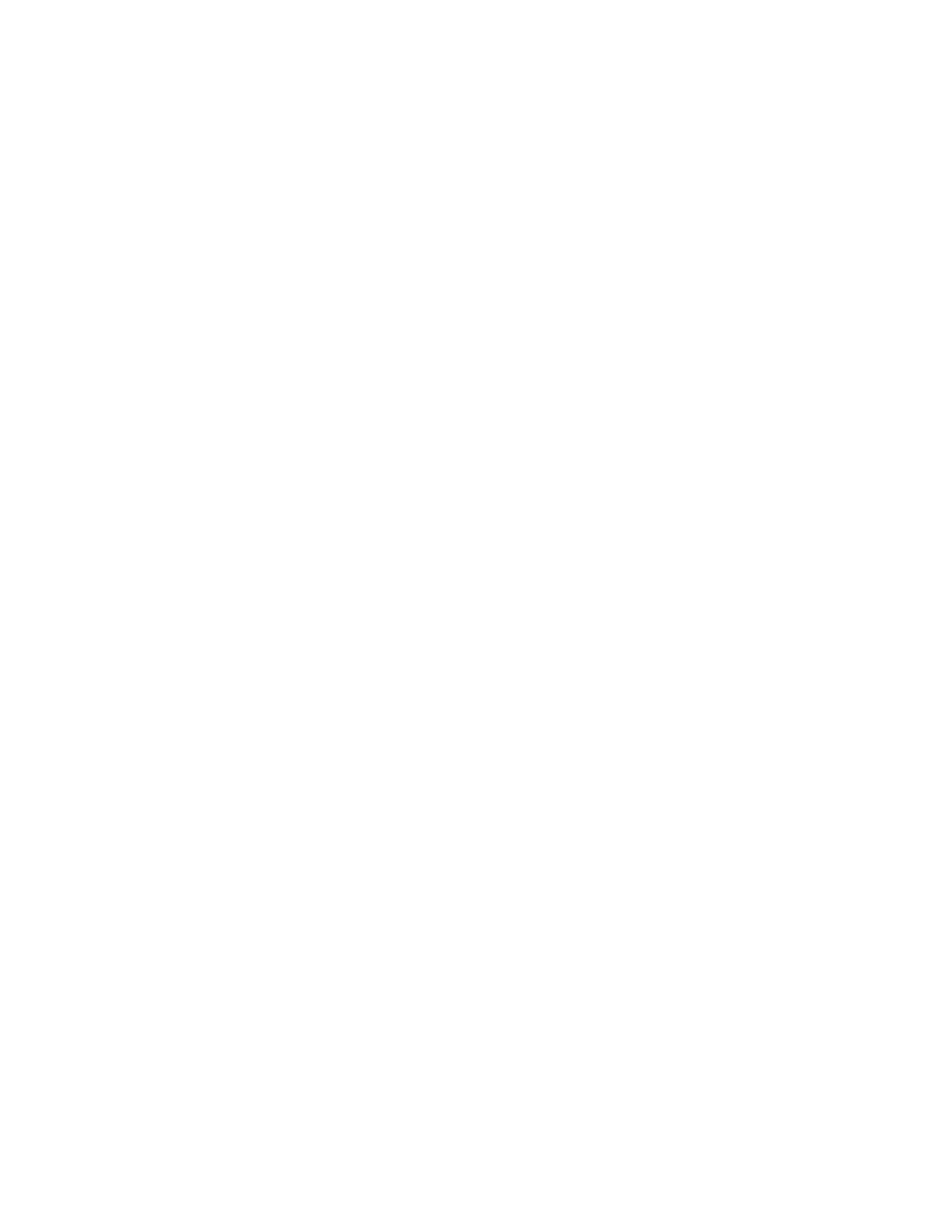} \qquad &
\qquad
\labellist
\small
\pinlabel {$\dfrac{\pi}{5}$} at 2 81
\pinlabel {$2$} at 28 75
\pinlabel {$e_g$} at 43 65
\pinlabel {$3$} at 58 75
\pinlabel {$5$} at 65 92
\pinlabel {$2$} at 71 73
\pinlabel {$\dfrac{\pi}{5}$} at 83 86
\pinlabel {\turnbox{-10}{$2\pi/3$}} at 78 49
\pinlabel {\turnbox{-55}{$\pi/3$}} at  25 28
\pinlabel {$e_1$} at 44 34
\pinlabel {\turnbox{-80}{$\pi/2$}} at 50 26
\pinlabel {$2$} at 68 34
\pinlabel {\turnbox{45}{$\pi/5$}} at 81 20
\endlabellist
\includegraphics[width=5.5 cm]{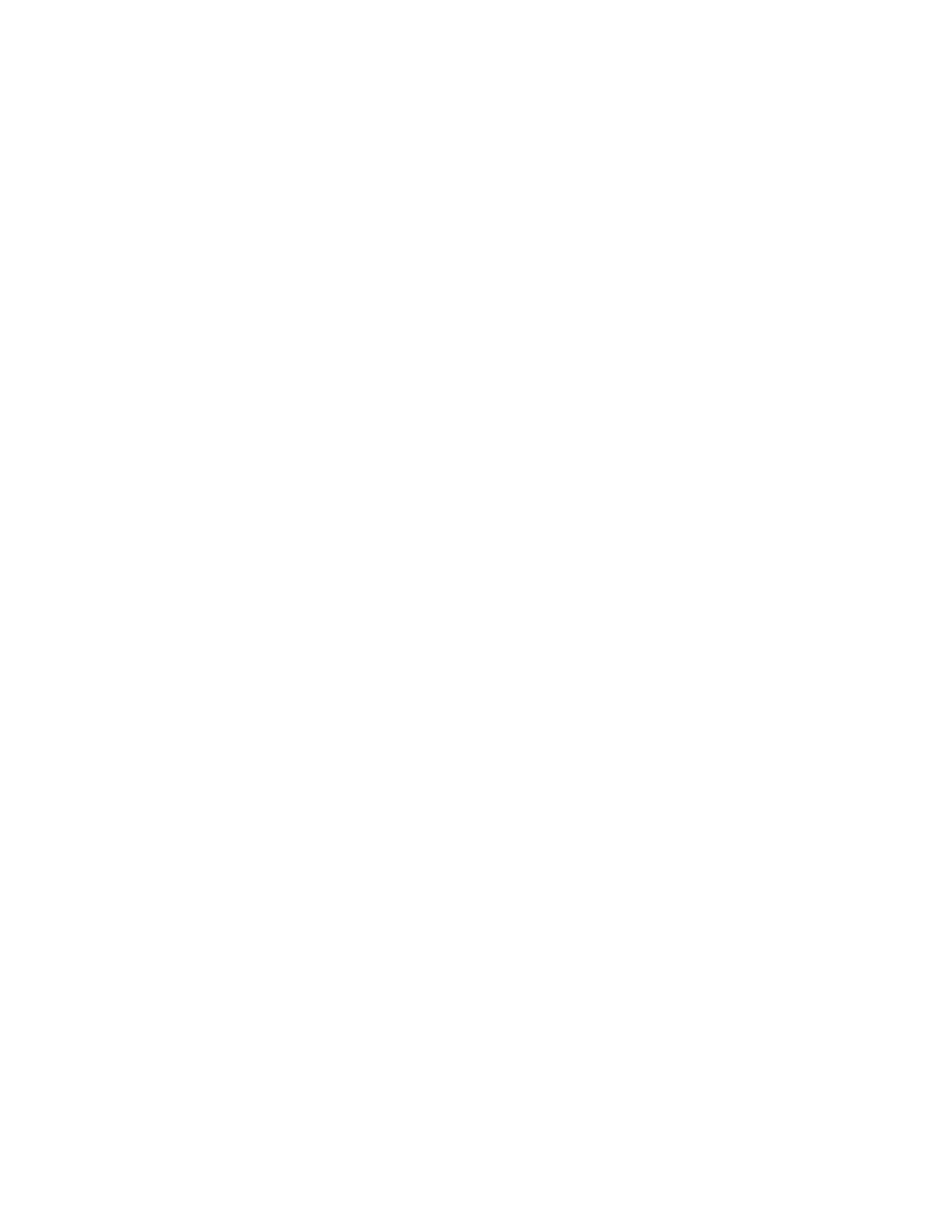}\\
(c): $P_{15}$\qquad & \qquad (d): $P_{17}$\\
\multicolumn{2}{c}{
\labellist
\small
\pinlabel {$\dfrac{\pi}{3}$} at 53 79
\pinlabel {$\dfrac{\pi}{5}$} at  89 80
\pinlabel {$\dfrac{\pi}{5}$} at 2 56
\pinlabel {$2$} at 15 58
\pinlabel {$2$} at 64 53
\pinlabel {$3$} at 77 58
\pinlabel {$5$} at 16 44
\pinlabel {$e_g$} at 49 38
\pinlabel {$2\pi/3$} at 25 34
\pinlabel {\turnbox{-30}{$\pi/3$}} at 20 19
\pinlabel {$e_1$} at 36 23
\pinlabel {$2$} at 66 20
\pinlabel {\turnbox{30}{$2\pi/5$}} at 88 20
\endlabellist
\includegraphics[width=5.3 cm]{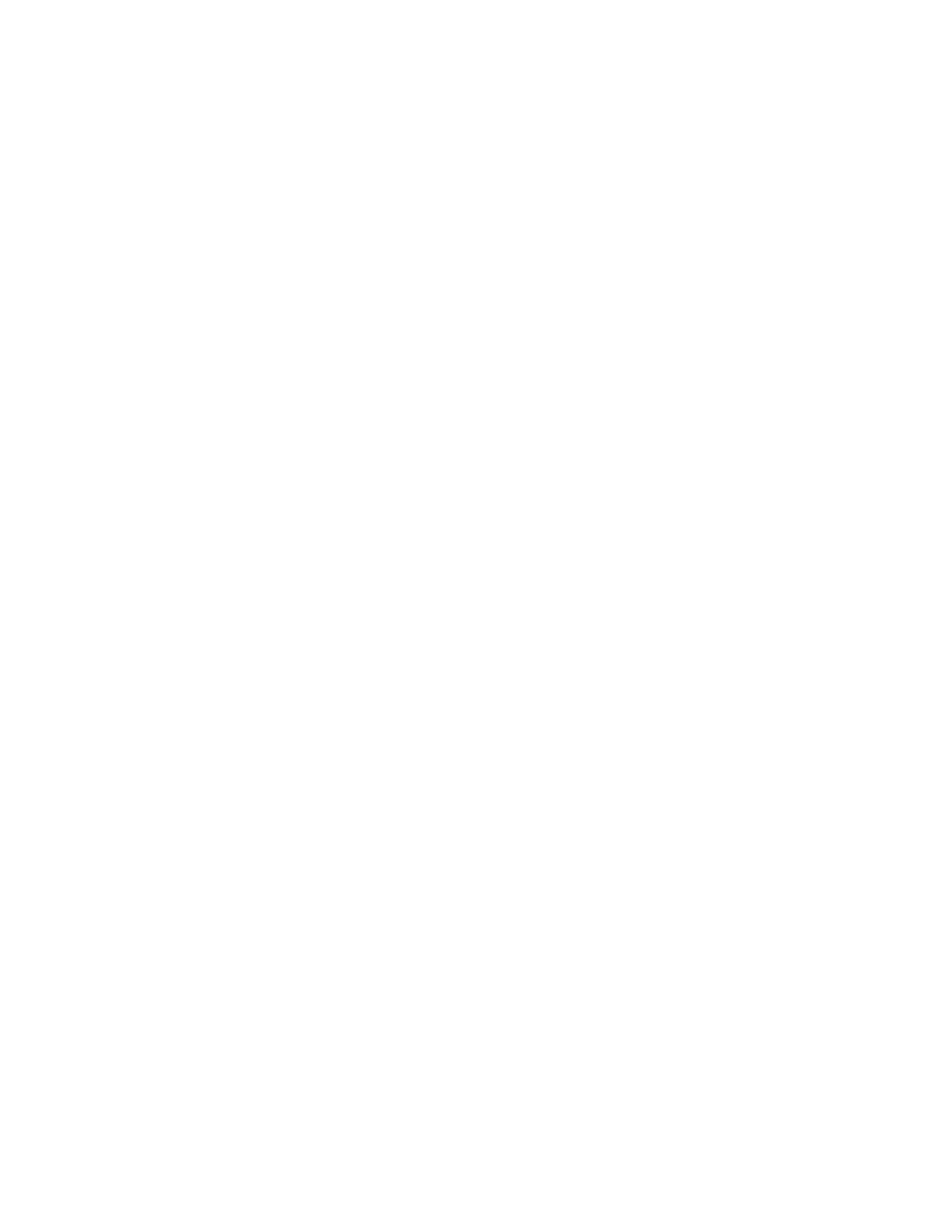}}\\
\multicolumn{2}{c}{(e): $P_{18}$}\\
\end{tabular}
\caption{}\label{gamma_central}
\end{figure}

$\mathbf P_{13}$. \qua
$n=3$, $m=10$, $\ell=3/2$, $\phi=3\pi/5$.
$\P(\alpha,\alpha',\omega,\zeta)$
is decomposed into tetrahedra $T=T[2,3,5;2,3,2]$.
A half of $T$ is a fundamental polyhedron for $\Gamma^*$
(\fullref{gamma_central}(a)).
$\Gamma^*=\langle G_T,e_g\rangle$.

$\mathbf P_{14}$. \qua
$n=3$, $m=10$, $\ell=5/4$, $\phi=2\pi/3$.
$\P(\alpha,\alpha',\omega,\zeta)$
is decomposed into tetrahedra $T=T[2,3,5;2,2,4]$ each of which
is a fundamental polyhedron for $\Gamma^*$
(\fullref{gamma_central}(b)).
$\Gamma^*=G_T$.

$\mathbf P_{15}$. \qua
$n=5$, $m=4$, $\ell=3/2$, $\phi=\pi/5$.
$\P(\alpha,\alpha',\omega,\zeta)$
is decomposed into tetrahedra $T=T[2,3,5;2,3,2]$.
A half of $T$ is a fundamental polyhedron for $\Gamma^*$
(\fullref{gamma_central}(c)).
$\Gamma^*=\langle G_T,e_g\rangle$.

$\mathbf P_{17}$. \qua
$n=5$, $m=4$, $\ell=5/2$, $\phi=\pi/3$.
$\P(\alpha,\alpha',\omega,\zeta)$
is decomposed into tetrahedra $T=T[2,3,5;2,2,5]$.
A half of $T$ is a fundamental polyhedron for $\Gamma^*$
(\fullref{gamma_central}(d)).
$\Gamma^*=\langle G_T,e_g\rangle$.

$\mathbf P_{18}$. \qua
$n=5$, $m=6$, $\ell=5/4$, $\phi=\pi/3$.
$\P(\alpha,\alpha',\omega,\zeta)$
is decomposed into tetrahedra $T=T[2,3,5;2,2,5]$.
A half of $T$ is a fundamental polyhedron for $\Gamma^*$
(\fullref{gamma_central}(e)).
$\Gamma^*=\langle G_T,e_g\rangle$.

\begin{figure}[htp!]
\centering
\begin{tabular}{cc}
\labellist
\small
\pinlabel {$3$} at 46 68
\pinlabel {$\dfrac{\pi}{3}$} at 71 73
\pinlabel {$\dfrac{\pi}{n}$} at 104 87
\pinlabel {$\dfrac{\pi}{n}$} at 3 55
\pinlabel {$2$} at 43 50
\pinlabel {$e_g$} at 57 55
\pinlabel {$e_1$} at 25 34
\pinlabel {\turnbox{30}{$\pi/2$}} at 33 25
\pinlabel {\turnbox{50}{$2\pi/n$}} at 92 35
\pinlabel {\turnbox{15}{$\pi/3$}} at 46 9
\endlabellist
\includegraphics[width=5.4 cm]{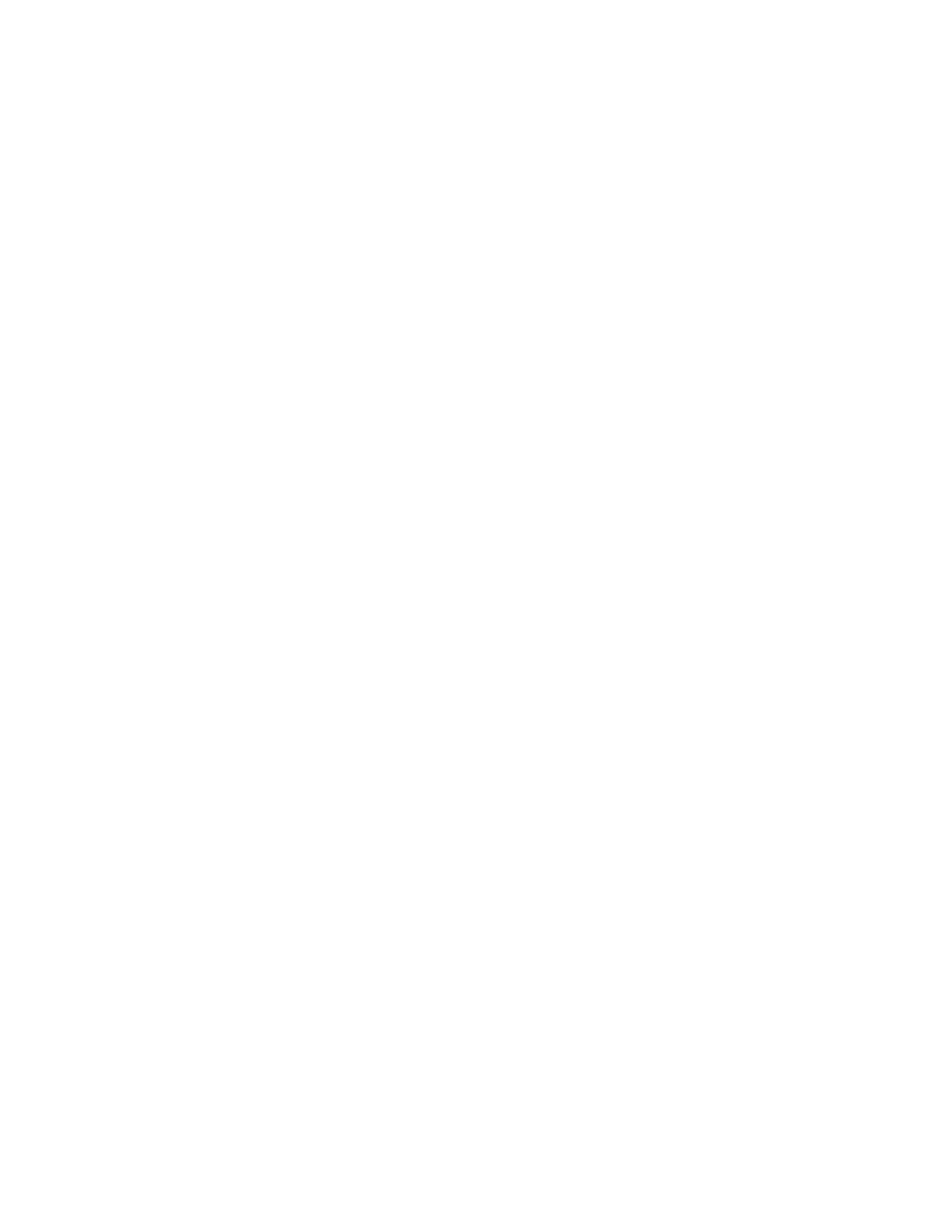} \qquad &
\qquad
\labellist
\small
\pinlabel {$\pi/n$} at 68 104
\pinlabel {$\sigma$} at 32 83
\pinlabel {$4$} at 41 76
\pinlabel {$2$} at 56 80
\pinlabel {$2$} at 77 79
\pinlabel {$\omega$} at 88 92
\pinlabel {$\alpha$} at 13 76
\pinlabel {$e_1$} at 15 61
\pinlabel {$\kappa$} at 94 80
\pinlabel {\turnbox{90}{{$\pi/n$}}} at 3 71
\pinlabel {\turnbox{-55}{\tiny{$2\pi/n$}}} at 28 72
\pinlabel {$L$} at 45 65
\pinlabel {$M$} at 58 68
\pinlabel {$2$} at 73 57
\pinlabel {\turnbox{55}{\tiny{$2\pi/4$}}} at 84 60
\pinlabel {$\alpha'$} at 96 60
\pinlabel {\turnbox{75}{{$\pi/n$}}} at 108 55
\pinlabel {$s$} at 44 50
\pinlabel {$e_g$} at 59 45
\pinlabel {$2$} at 92 46
\pinlabel {\turnbox{5}{\tiny{$\pi/n$}}} at 35 37
\pinlabel {\footnotesize{$\eta$}} at 30 31
\pinlabel {$K$} at 33 20
\pinlabel {\turnbox{-30}{{$2\pi/4$}}} at 37 16
\pinlabel {$L'$} at 89 4
\pinlabel {\turnbox{25}{\tiny{$2\pi/n$}}} at 92 6
\endlabellist
\includegraphics[width= 5.9 cm]{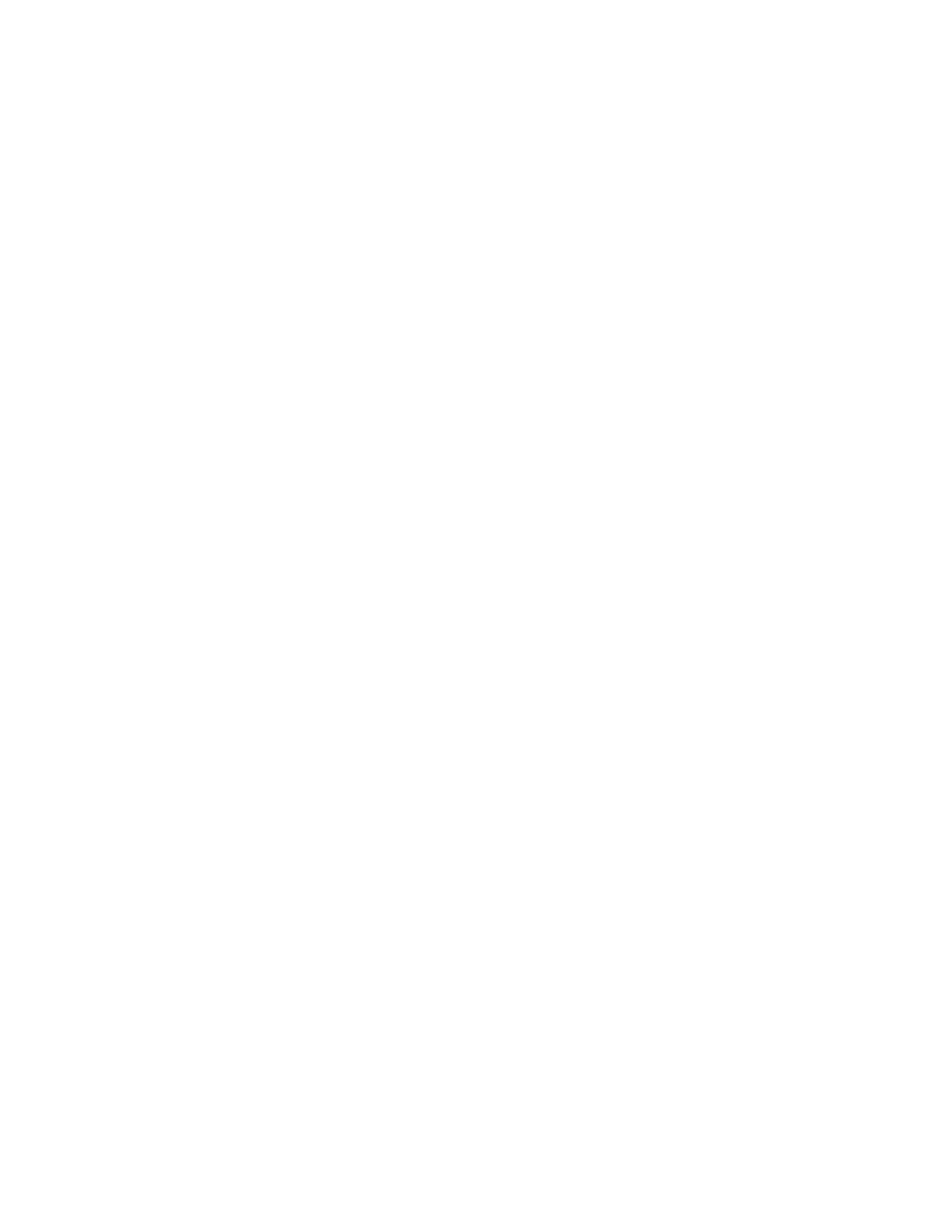}\\
(a): $P_{7}$\qquad & \qquad (b): $P_{8}$\\
\labellist
\small
\pinlabel {$2$} at 35 75
\pinlabel {$3$} at 71 82
\pinlabel {$3$} at 43 67
\pinlabel {$\dfrac{\pi}{3}$} at 3 50
\pinlabel {$3$} at 35 44
\pinlabel {$2$} at 46 49
\pinlabel {$e_g$} at 46 57
\pinlabel {$\frac{2\pi}{m}$} at 62 59
\pinlabel {$2$} at 76 58
\pinlabel {$\dfrac{\pi}{3}$} at 87 72
\pinlabel {$e_1$} at 25 30
\pinlabel {\turnbox{30}{$\pi/3$}} at 18 15
\pinlabel {\turnbox{15}{$\pi/2$}} at 34 6
\pinlabel {\turnbox{45}{$4\pi/m$}} at 77 29
\endlabellist
\includegraphics[width=3.6 cm]{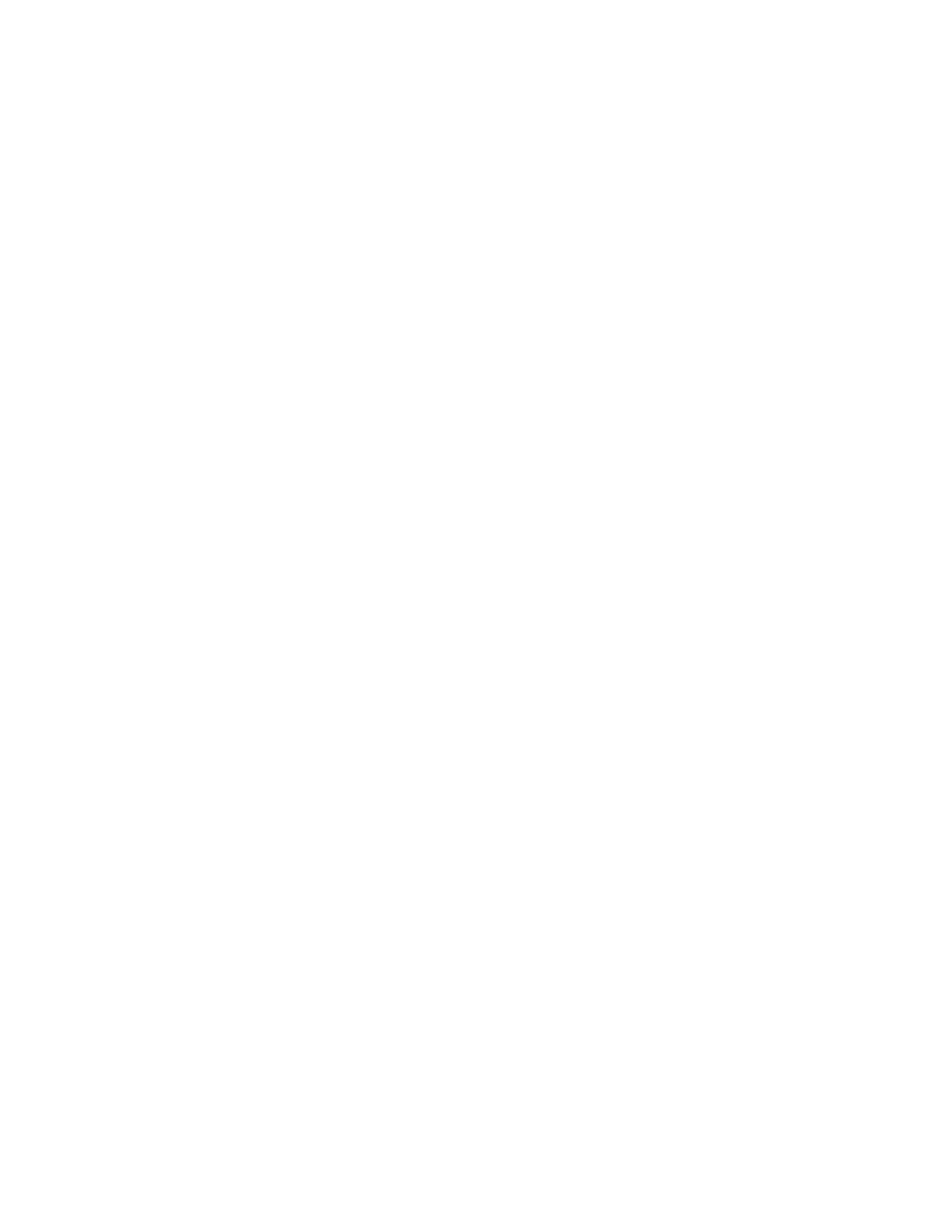} \qquad &
\qquad
\labellist
\small
\pinlabel {$3$} at 28 114
\pinlabel {$2$} at 30 98
\pinlabel {$e_g$} at 46 100
\pinlabel {$2$} at 67 109
\pinlabel {$\dfrac{\pi}{5}$} at 95 96
\pinlabel {$\dfrac{\pi}{5}$} at 3 77
\pinlabel {$3$} at 20 71
\pinlabel {$5$} at 35 76
\pinlabel {$3$} at 65 85
\pinlabel {\turnbox{90}{$\pi/2$}} at 53 58
\pinlabel {$e_1$} at 17 55
\pinlabel {$2$} at 26 42
\pinlabel {\turnbox{20}{$\pi/3$}} at 28 24
\pinlabel {\turnbox{-20}{$\pi/5$}} at 20 14
\pinlabel {\turnbox{65}{$2\pi/5$}} at 82 29
\endlabellist
\includegraphics[width=3.8 cm]{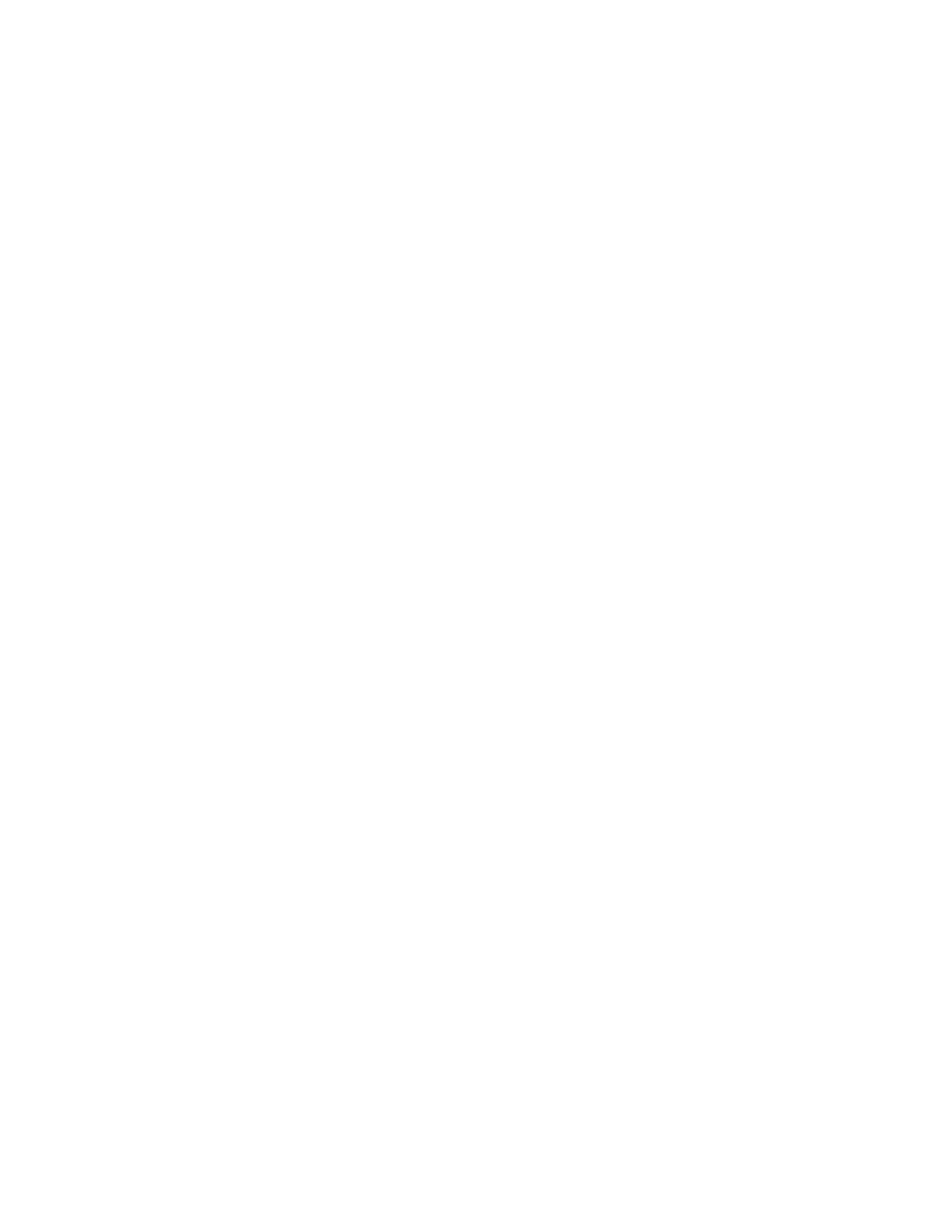}\\
(c): $P_{10}$\qquad & \qquad (d): $P_{16}$\\
\labellist
\small
\pinlabel {$\dfrac{\pi}{5}$} at 65 90
\pinlabel {$A$} at 80 85
\pinlabel {$\dfrac{\pi}{5}$} at 126 100
\pinlabel {$C$} at -5 50 
\pinlabel {$5$} at 15 62
\pinlabel {$\dfrac{2\pi}{5}$} at 55 60
\pinlabel {$5$} at 83 65
\pinlabel {$L$} at 32 40
\pinlabel {$E$} at 48 42
\pinlabel {$M$} at 82 40
\pinlabel {$\dfrac{3\pi}{5}$} at 7 25
\pinlabel {$B$} at 23 20
\pinlabel {$2$} at 42 30
\pinlabel {$V$} at 70 25
\pinlabel {$D$} at 120 25
\endlabellist
\includegraphics[width=5.1 cm]{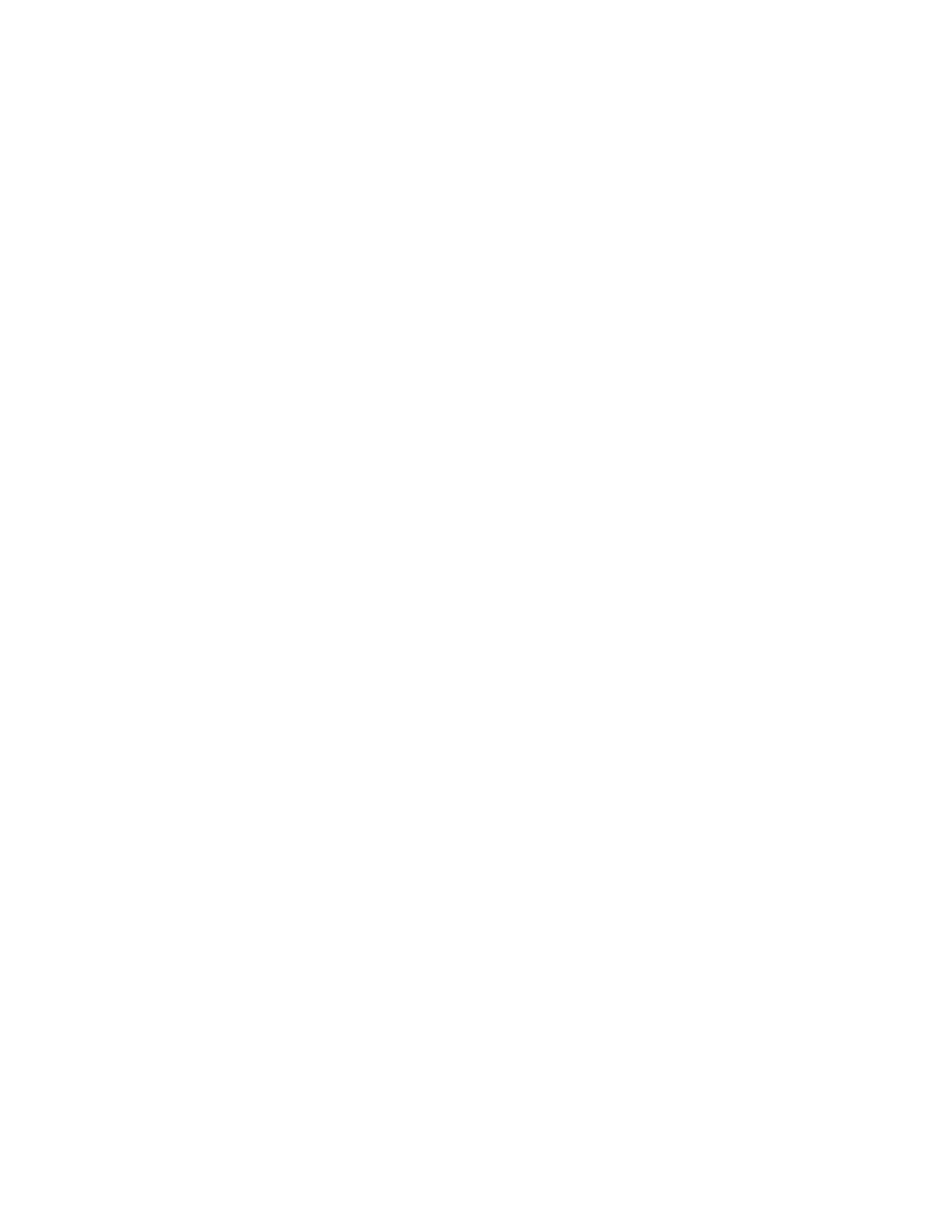}\qquad &
\labellist
\small
\pinlabel {$E$} at 35 137
\pinlabel {$3$} at 69 134
\pinlabel {$A$} at 119 137
\pinlabel {$5$} at 31 117
\pinlabel {$5$} at 75 122
\pinlabel {$M$} at 135 118
\pinlabel {$L$} at 25 104
\pinlabel {$e_g$} at 44 107
\pinlabel {$2$} at 59 110
\pinlabel {$5$} at 143 111
\pinlabel {$D$} at 154 104
\pinlabel {$C$} at 4 73
\pinlabel {$2$} at 50 73
\pinlabel {$3$} at 68 87
\pinlabel {$3$} at 126 70
\pinlabel {$5$} at 40 46
\pinlabel {$3$} at 59 39
\pinlabel {$B$} at 55 5
\pinlabel {$5$} at 82 5
\pinlabel {$V$} at 109 5
\endlabellist
\qquad \includegraphics[width=5.4 cm]{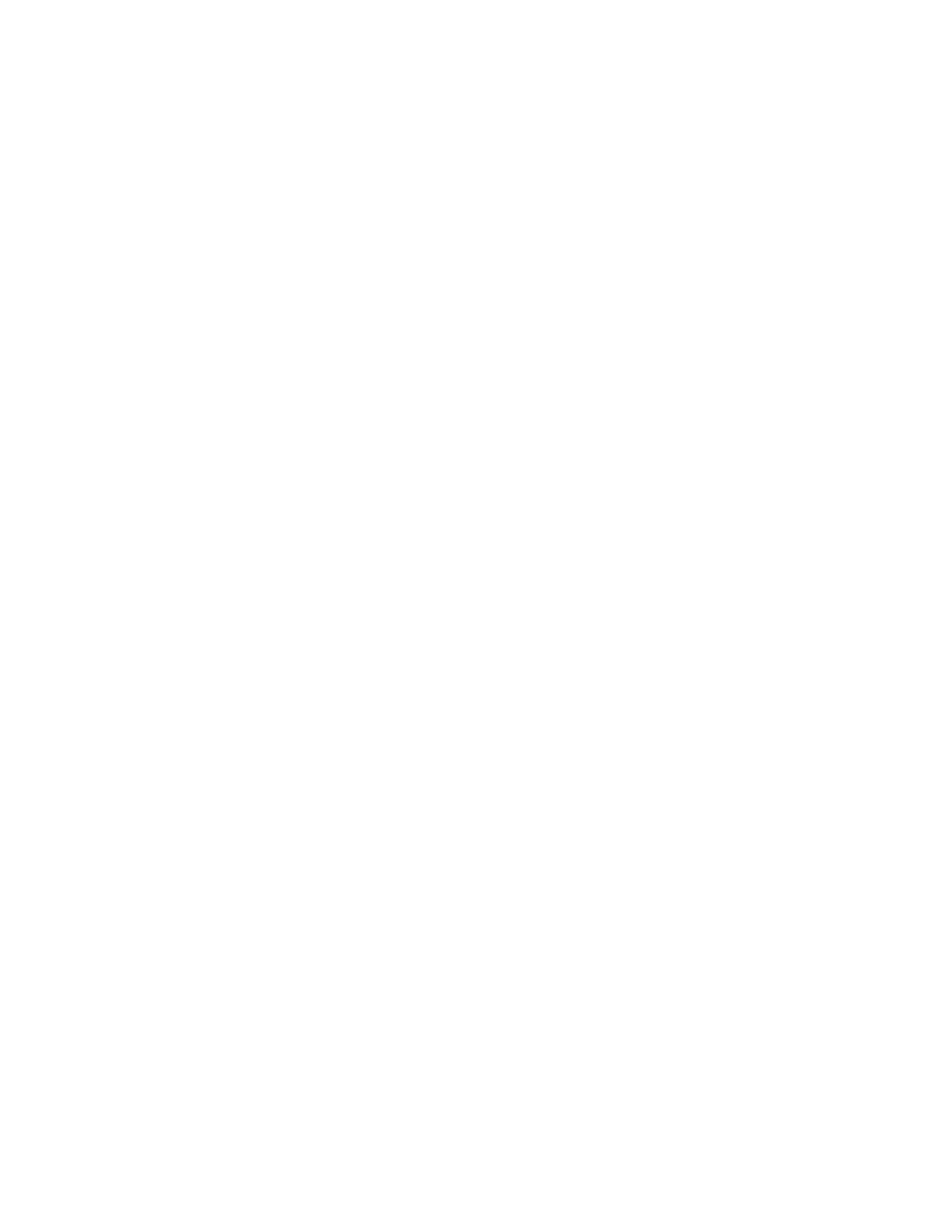}\\
\multicolumn{2}{c}{(e): $P_{19}$}\\
\end{tabular}
\caption{}\label{gamma_noncentral}
\end{figure}

Now consider discrete groups for which $R_\zeta\not\in\Gamma^*$.
In all these cases $\ell=p/3$, where $(p,3)=1$. Let $\eta$ be the
plane
through $\alpha'\cap\zeta$ that makes a dihedral angle of
$2\pi/p$
with~$\alpha'$ and let
$\overline\P=\P(\alpha,\alpha',\omega,\delta,\eta)$.
Denote by $\theta_1$ and $\theta_2$ dihedral angles of
$\overline\P$ at $\eta\cap\alpha$ and $\eta\cap\omega$,
respectively.

If $\P$ is compact or non-compact with $m<\infty$, $\overline\P$
is determined by values $n$, $m$, $\ell$, $\theta_1$ and
$\theta_2$.
For $m=\overline\infty$, we give the distance $d$ between
$\alpha$
and $\alpha'$.

$\mathbf P_7$. \qua
$n\geq 5$, $(n,3)=1$, $m=6$, $\ell=n/3$, $\theta_1=\pi/3$,
$\theta_2=\pi/2$.
$\P(\alpha,\alpha',\omega,\eta)$ is decomposed into tetrahedra
$T=T[2,3,n;2,3,n]$.
A quarter of $T$ is a fundamental polyhedron for $\Gamma^*$
(\fullref{gamma_noncentral}(a)).
$\Gamma^*=\langle G_T,e_1,e_g\rangle$.

$\mathbf P_8$. \qua
$n\geq 5$, $(n,3)=1$, $m=\overline\infty$, $\ell=n/3$,
$\theta_1=\pi/2$, $\theta_2=\pi/n$ and 
$\cosh d=2\cos^2(\pi/n)$.
$\P(\alpha,\alpha',\omega,\eta)$ is decomposed into tetrahedra
$T=T[2,2,4;2,n,4]$.
A half of $T$ is a fundamental polyhedron for $\Gamma^*$
(\fullref{gamma_noncentral}(b)).
$\Gamma^*=\langle G_T,e_1\rangle$.

$\mathbf P_{10}$. \qua
$n=3$, $m\geq 8$ is even, $(m,3)=1$, $\ell=m/6$,
$\theta_1=\pi/2$,
$\theta_2=\pi/3$.
$\P(\alpha,\alpha',\omega,\eta)$ is decomposed into tetrahedra
$T=T[2,3,m/2;2,3,3]$.
A half of $T$ is a fundamental polyhedron for $\Gamma^*$
(\fullref{gamma_noncentral}(c)).
$\Gamma^*=\langle G_T,e_g\rangle=\langle G_T,e_1\rangle$.

$\mathbf P_{16}$. \qua
$n=5$, $m=4$, $\ell=5/3$, $\theta_1=\pi/5$, $\theta_2=2\pi/3$.
$\P(\alpha,\alpha',\omega,\eta)$ is decomposed into tetrahedra
$T=T[2,3,5;2,3,2]$.
A half of $T$ is a fundamental polyhedron for $\Gamma^*$
(\fullref{gamma_noncentral}(d)).
$\Gamma^*=\langle G_T,e_g\rangle=\langle G_T,e_1\rangle$.

$\mathbf P_{19}$. \qua
$n=5$, $m=\overline\infty$, $\ell=5/3$, $\theta_1=3\pi/5$
and $\cosh d=(5+\sqrt{5})/4$. The planes $\eta$ and $\omega$
are disjoint.
$\P(\alpha,\alpha',\omega,\eta)$
is decomposed into tetrahedra $T=T[2,2,3;2,5,3]$.
A half of $T$ is a fundamental polyhedron for $\Gamma^*$, see
\fullref{gamma_noncentral}(e), where $LM=e_g$ and $VE=e_1$.
$\Gamma^*=\langle G_T,e_g\rangle$.

\section{Kleinian orbifolds and their fundamental 
groups}\label{sec:orbifolds}

Let $\Gamma$ be a non-elementary Kleinian group,
and let $\Omega(\Gamma)$ be the discontinuity set of $\Gamma$.
Following Boileau and Porti \cite{BP00}, 
we say that the {\em Kleinian orbifold\/}
$Q(\Gamma)=({\mathbb H}^3\cup\Omega(\Gamma))/\Gamma$
is an orientable $3$--orbifold with a complete hyperbolic
structure
on its interior ${\mathbb H}^3/\Gamma$ and a conformal structure
on its boundary $\Omega(\Gamma)/\Gamma$.

In this section we shall describe the Kleinian orbifold $Q(\Gamma)$
and a presentation for each truly spatial discrete $\mathcal{RP}$
group $(\Gamma;f,g)$ with $f$ elliptic and $g$ hyperbolic.
Since a fundamental polyhedron for $\Gamma^*$
(a finite index extension of $\Gamma$) was shown,
it remains to construct a fundamental polyhedron for
$\Gamma$ itself and identify the equivalent points
on the boundary of the new polyhedron to get the corresponding orbifold.

In figures, we schematically draw singular sets and boundary
components
of the orbifolds using fat vertices and fat edges.
In fact, each picture
gives rise to an infinite series of orbifolds which might be
compact or non-compact of finite or infinite volume.

We say that a finite 3--regular graph $\Sigma(Q)$ 
with fat vertices and fat edges
embedded in a topological space $X$
{\em represents the singular set and/or
boundary components of $Q=Q(\Gamma)$\/} if:
\begin{enumerate}
\item non-fat edges of $\Sigma(Q)$ are labelled by positive
integers greater than 1,
\item fat edges of $\Sigma(Q)$ are labelled by positive 
integers greater than 1
or symbols $\infty$ and $\overline\infty$,
\item the endpoints of a fat edge are fat vertices,
\item if $p$, $q$ and $r$ are labels of the edges incident
to a non-fat vertex, then $1/p+1/q+1/r>1$.
\end{enumerate}
If an edge has no label then the label is meant to be $2$.
To reproduce the orbifold $Q$ from a graph $\Sigma(Q)$
we first work out all fat vertices and then all fat edges
according to labels assigned as follows.

Let $v\in\Sigma(Q)$ be a fat vertex and $p$, $q$ and
$r$ be the labels of the edges incident to $v$.

Suppose that all $p$, $q$, $r<\infty$.
If $1/p+1/q+1/r>1$ then the vertex $v$ is a singular 
point of $Q$ and the local group of $v$ is one of 
the finite groups $D_{2n}$, $S_4$, $A_4$, $A_5$.
If $1/p+1/q+1/r=1$ then $v$ represents a puncture.
A cusp neighborhood of $v$ is a quotient of a horoball in
${\mathbb H}^3$ by a Euclidean triangle group $(2,3,6)$,
$(2,4,4)$ or $(3,3,3)$.
In case $1/p+1/q+1/r<1$ the vertex $v$
must be removed together with its open neighborhood, which
means that $Q$ has a boundary component.

If one of the indices, say $p$, equals $\infty$ and
$1/p+1/q+1/r=1$, then $q=r=2$ and $v$ is a puncture.

For all the other $p$, $q$, $r$, the vertex $v$ is removed
together with its open neighborhood.

Now we proceed with the edges.
If an edge $e$ (fat or non-fat) is labelled by an integer $p<\infty$,
then $e$ is a part of the singular set of the
orbifold $Q$ and consists of cone points of index $p$.

Fat edges labelled by $\infty$ represent cusps of $Q$.
A cusp neighborhood is the quotient of a horoball by an
elementary parabolic group.
Topologically it is $F\times [0,\infty)$, where $F$ is a 
Euclidean orbifold called the cross-section of the cusp
(see eg Boileau--Maillot--Porti \cite{BMP03} 
for geometric structures on orbifolds).

If $e$ is labelled by $\overline\infty$, then it must be 
deleted
together with its open regular neighborhood.
More details on how to `decode' an orbifold with fat edges
and vertices are given in Klimenko--Kopteva \cite{KK05-2}.
We do not discuss them here since fundamental polyhedra for all
$\Gamma$
will be found, so it is not difficult to reconstruct the
orbifolds.

Denote:
\begin{enumerate}
\item
$GT[n,m;q]=\langle f,g|f^n,g^m,[f,g]^q\rangle$,
\item
$PH[n,m,q]=
\langle x,y,z|x^n,y^2,z^2,(xz)^2,[x,y]^m,(yxyz)^q\rangle$,
\item
$H[p;n,m;q]=\langle
x,y,s|s^2,x^n,y^m,(xy^{-1})^p,(sxsy^{-1})^q,(sx^{-1}
y)^2\rangle$,
\item
$P[n,m,q]=\langle
w,x,y,z|w^n,x^2,y^2,z^2,(wx)^2,(wy)^2,(yz)^2,
(zx)^q,(zw)^m\rangle$,
\item $\Tet[p_1,p_2,p_3;q_1,q_2,q_3] = \\ \qua
\langle x,y,z|
x^{p_1},y^{p_2},z^{p_3},(yz^{-1})^{q_1},(zx^{-1})^{q_2},
(xy^{-1})^{q_3}\rangle$,
where, for simplicity, the group $\Tet[2,2,n;2,q,m]$ is
denoted by $\Tet[n,m;q]$,
\item
$GTet_1[n,m,q]=\langle
x,y,z|x^n,y^2,(xy)^m,[y,z]^q,[x,z]\rangle$,
\item
$GTet_2[n,m,q]=\langle
  x,y,z|x^n,y^2,(xy)^m,(xz^{-1}y^{-1}zy)^q,[x,z]\rangle$,
\item
$\mathcal{S}_2[n,m,q]=\langle
x,L|x^n,(xLxL^{-1})^m,(xL^2x^{-1}L^{-2})^q\rangle$,
\item
$\mathcal{S}_3[n,m,q]=\langle
x,L|x^n,(xLxL^{-1})^m,(xLxLxL^{-2})^q\rangle$,
\item
$R[n,m;q]=\langle u,v|(uv)^n,(uv^{-1})^m,[u,v]^q\rangle$.
\end{enumerate}
In the presentations (1)--(10), the exponents $n,m,q,\dots$ may be
integers (greater than 1), $\infty$ or~$\overline\infty$.
We employ the symbols
$\infty$ and $\overline\infty$ in the following way.
If we have relations of the form $w^n=1$, where
$n=\overline\infty$,
we remove them
from the presentation (in fact, this means that the
element $w$ is hyperbolic in the Kleinian group). Further, if we
keep
the relations $w^\infty=1$, we get a Kleinian group
presentation where parabolics are indicated.
To get an abstract group presentation, we need to remove such
relations as well.

The reader can find the orbifolds that correspond to the above
presentations in Figures \ref{gt_tet}, \ref{s2s1} and \ref{rp3}.

\begin{figure}[ht!]
\centering
\begin{tabular}{ccc}
\labellist
\small
\pinlabel {$n$} at 4 73
\pinlabel {$q$} at 82 3
\pinlabel {$m$} at 131 90
\endlabellist
\includegraphics[width=3 cm]{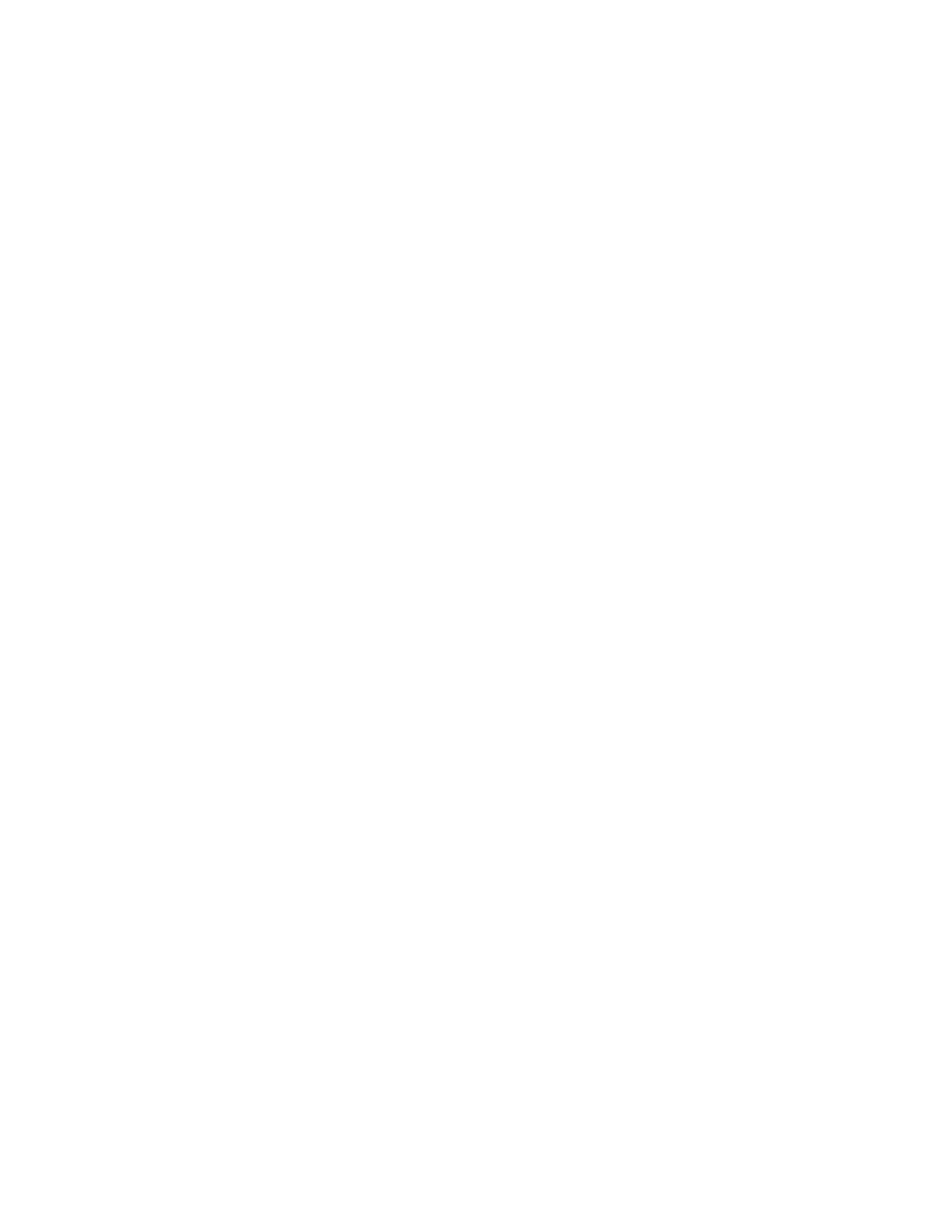}\quad &
\quad
\labellist
\small
\pinlabel {$n$} at 16 48
\pinlabel {$q$} at 72 14
\pinlabel {$m$} at 45 22
\endlabellist
\includegraphics[width=2.3 cm]{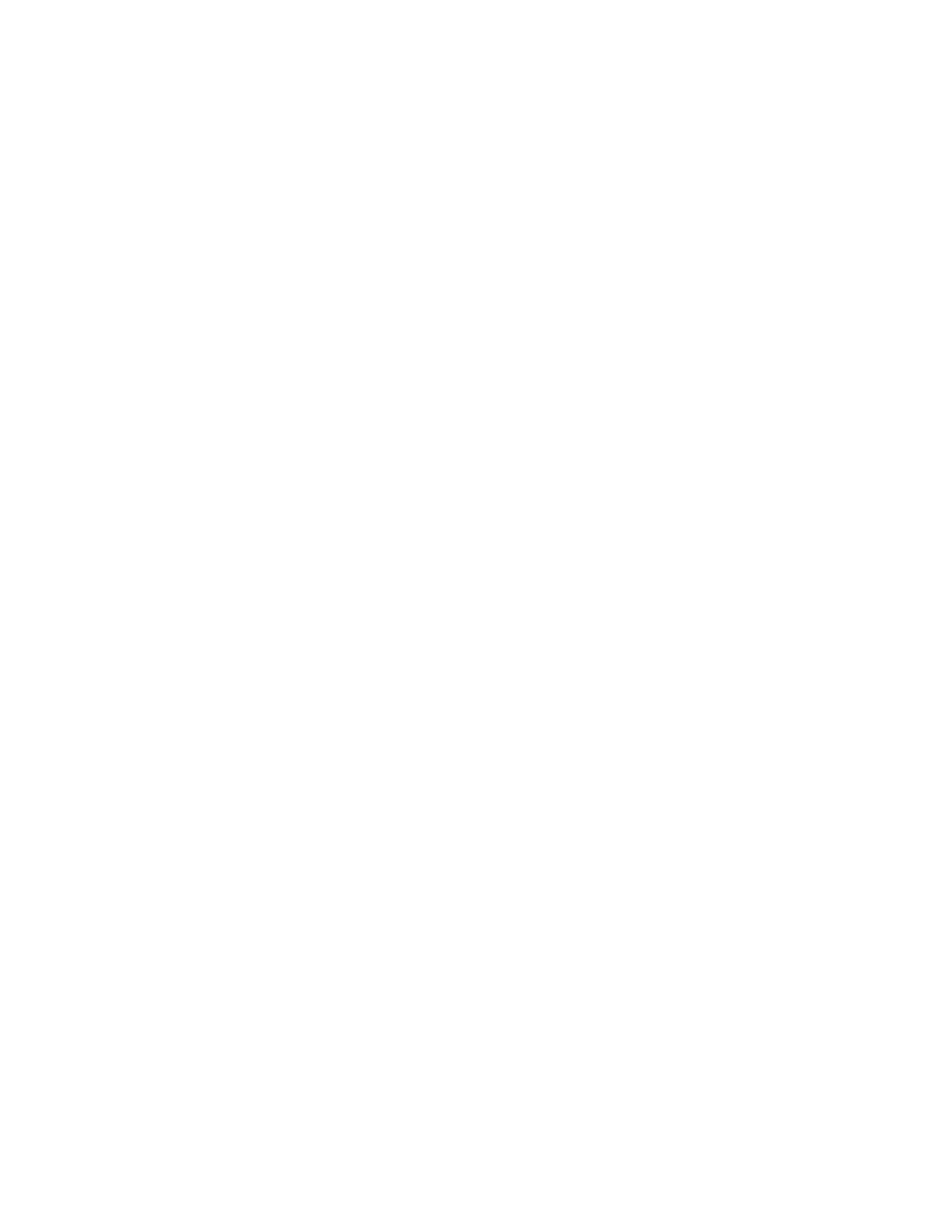} \quad&
\quad 
\labellist
\small
\pinlabel {$n$} at 30 44
\pinlabel {$q$} at 61 2
\pinlabel {$m$} at 83 45
\pinlabel {$p$} at 32 90
\endlabellist
\includegraphics[width=2.5 cm]{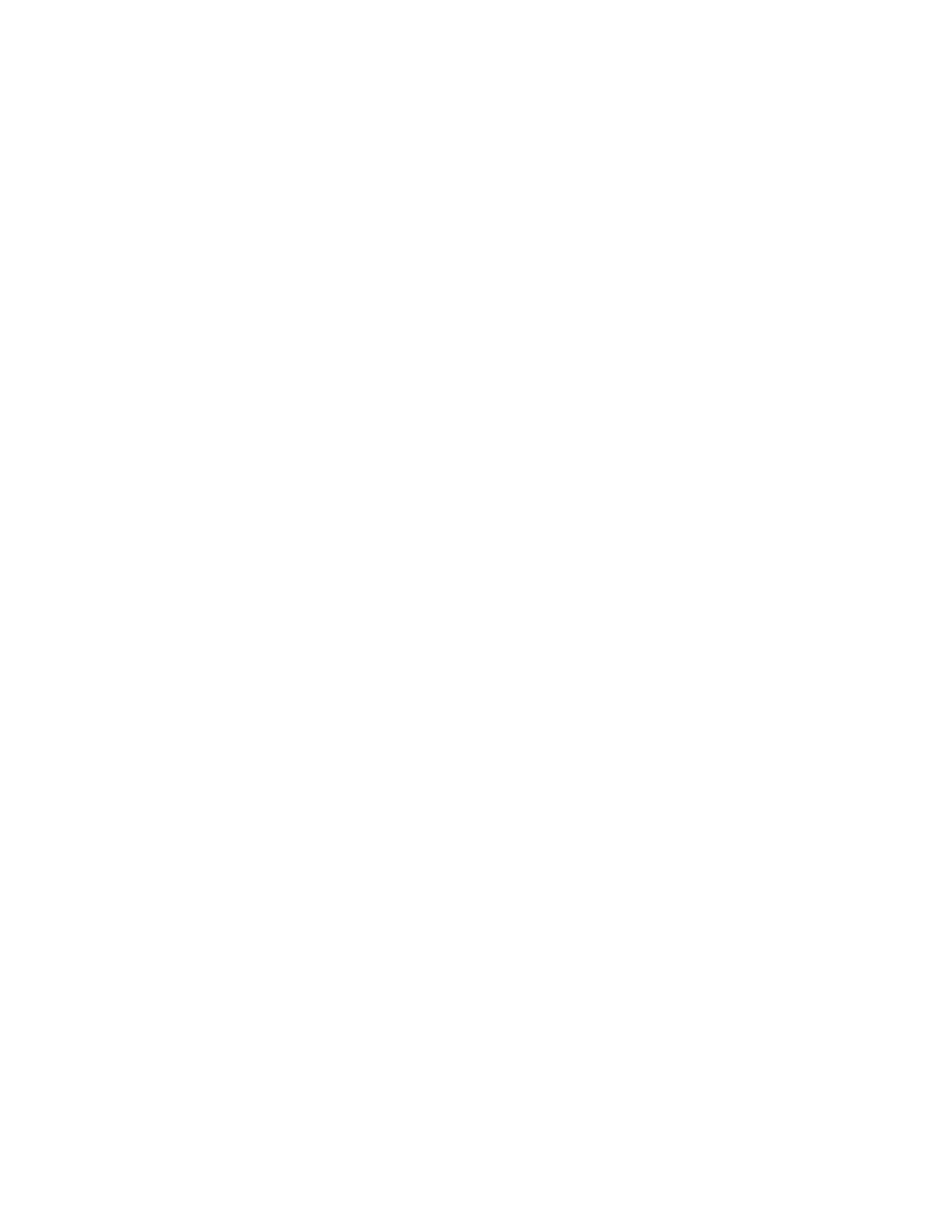}\\
(a): $GT[n,m;q]$\quad &
\quad(b): $PH[n,m,q]$ \quad &
\quad (c): $H[p;n,m;q]$\\
\quad 
\labellist
\small
\pinlabel {$p_1$} at 14 52
\pinlabel {$p_2$} at 38 52
\pinlabel {$p_3$} at 38 20
\pinlabel {$q_1$} at 61 36
\pinlabel {$q_2$} at 44 5
\pinlabel {$q_3$} at 77 54
\endlabellist
\includegraphics[width=2.3 cm]{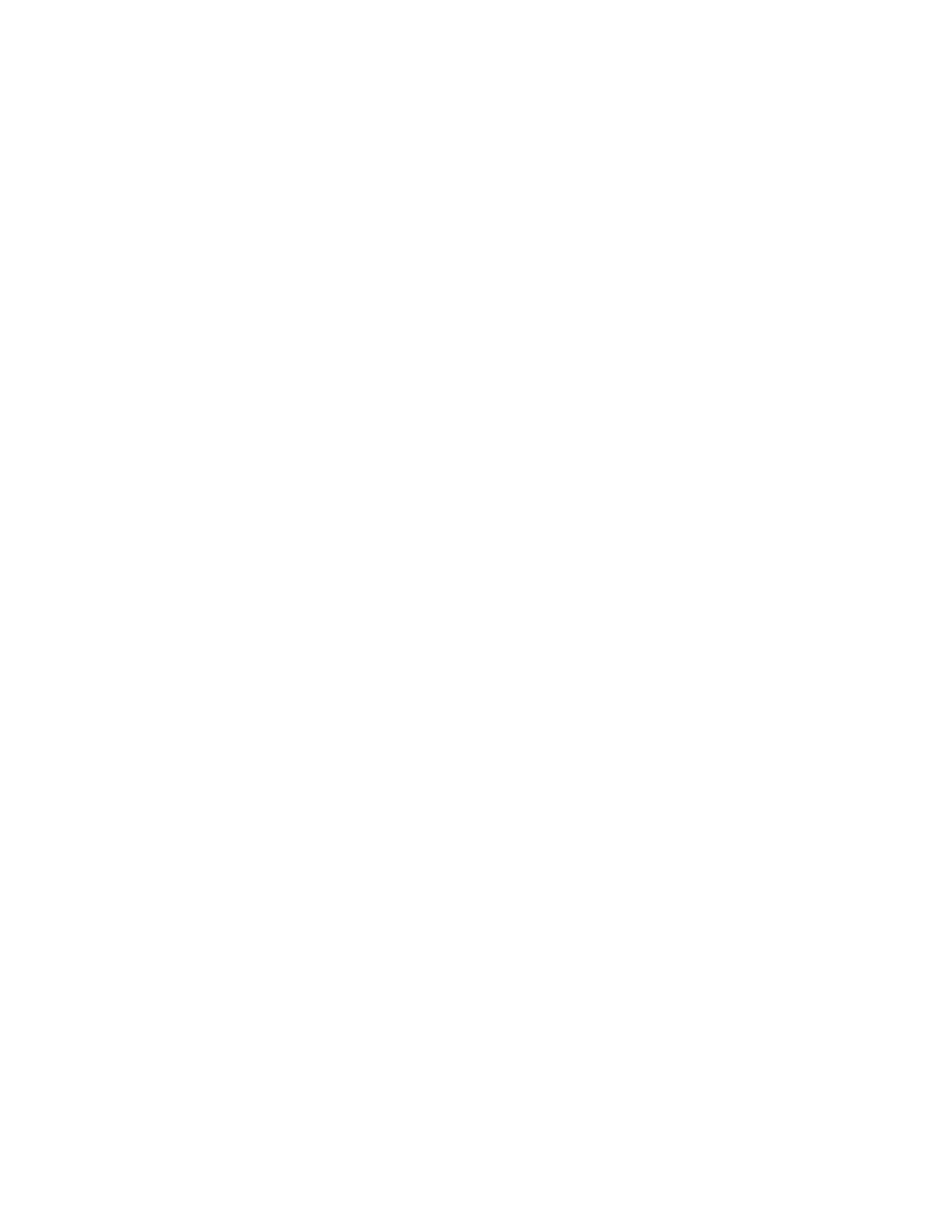}&
\labellist
\small
\pinlabel {$n$} at 36 18
\pinlabel {$q$} at 44 3
\pinlabel {$m$} at 76 53
\endlabellist
\includegraphics[width=2.3 cm]{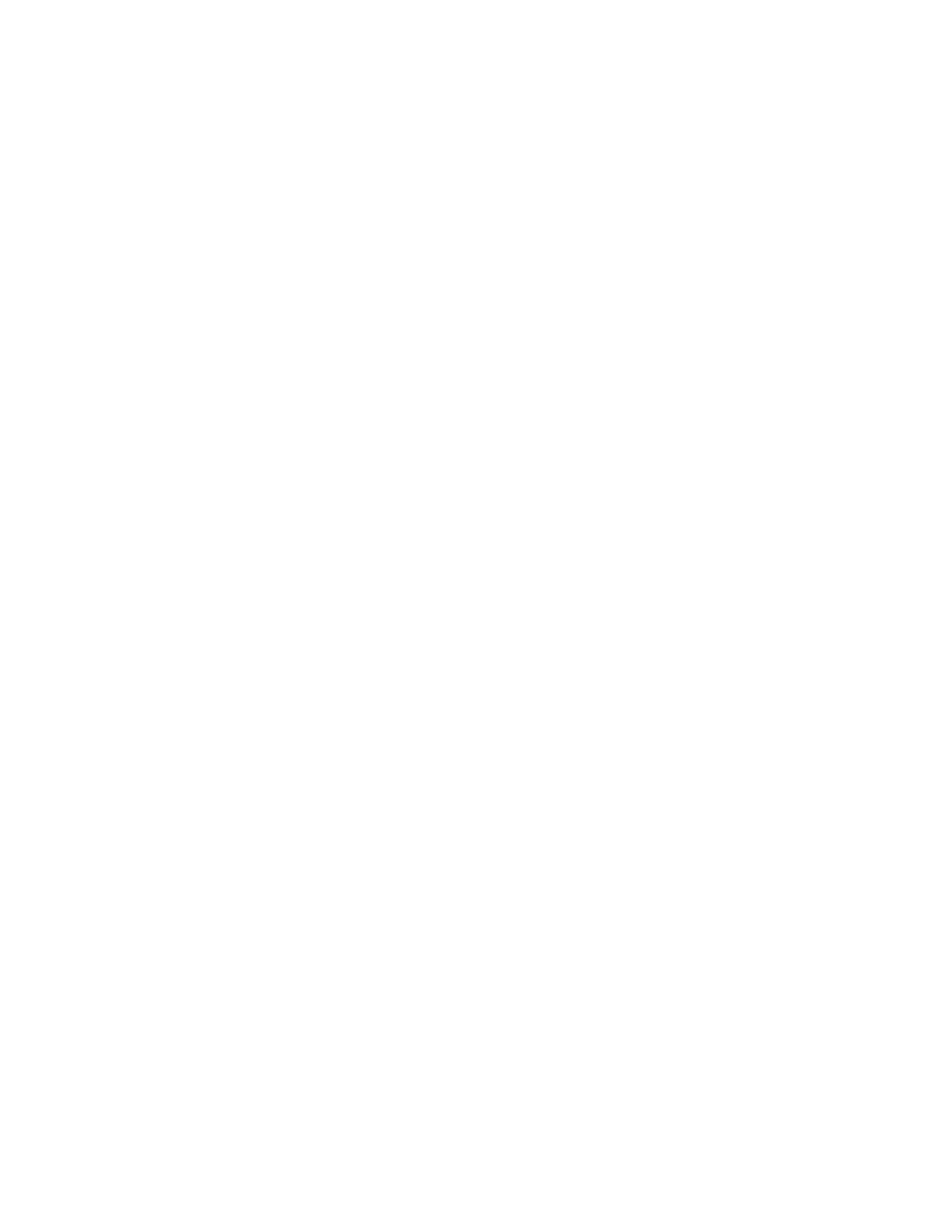} &
\labellist
\small
\pinlabel {$n$} at 26 11
\pinlabel {$q$} at 78 46
\pinlabel {$m$} at 64 11
\endlabellist
\includegraphics[width=2.3 cm]{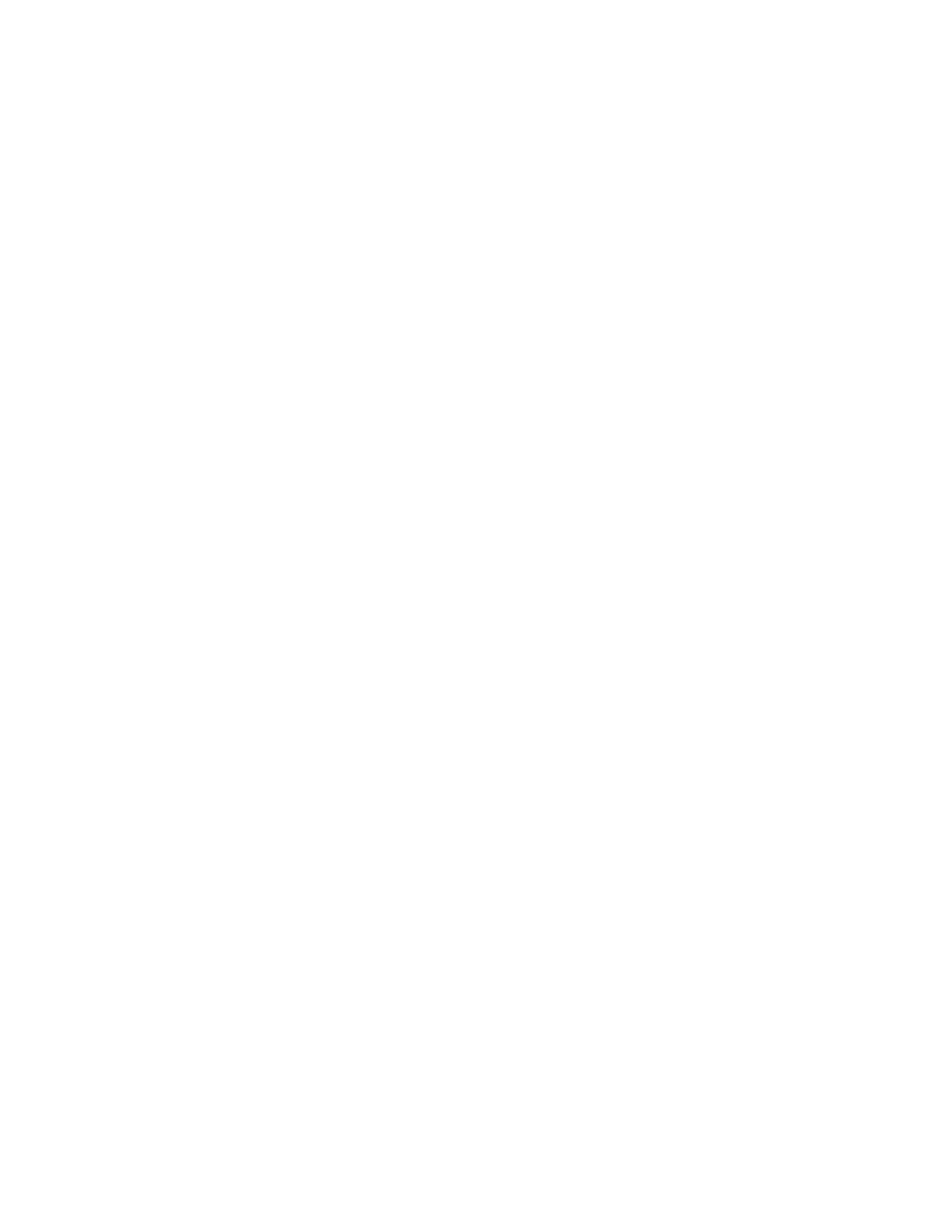}\\
(d): $\Tet[p_1,p_2,p_3;q_1,q_2,q_3]$&
(d'): $\Tet[n,m;q]$&
(e): $P[n,m,q]$\\
\end{tabular}
\caption{Orbifolds embedded in ${\mathbb S}^3$}\label{gt_tet}
\end{figure}

We start with description of presentations and orbifolds for all
truly spatial discrete groups generated by a primitive
elliptic and a hyperbolic elements
with {\em disjoint axes} (\fullref{groups_disjoint}).
All such orbifolds are embedded in $\mathbb S^3$.

As
usual, we can also apply the theorem when the elliptic generator is
non-primitive, using recalculation formulas for parameters
as follows.
Suppose that $f$ is a non-primitive elliptic of finite order $n$,
that is, $\beta(f)=-4\sin^2(q\pi/n)$, where $(q,n)=1$ and $1<q<n/2$.
Then there exists an integer $r$ so that $f^r$
is primitive of the same order.
Obviously, $\langle f,g\rangle=\langle f^r,g\rangle$
and $\beta(f^r)=-4\sin^2(\pi/n)$. 
By Gehring and Martin \cite{GM94-1},
$\gamma(f,g)=\gamma(f^r,g)\cdot\beta(f)/\beta(f^r)$.

\begin{theorem}\label{groups_disjoint}
Let $(\Gamma;f,g)$ be a discrete $\mathcal{RP}$ group
with
$\beta=-4\sin^2(\pi/n)$, $n\geq 3$,
$\beta'\in(0,+\infty)$ and
$\gamma\in(-\infty,0)$. Then one of the following occurs.
\begin{enumerate}
\item $\gamma=-4\cosh^2u$, where $u\in \mathcal{U}$, $(t(u),2)=2$
and $t(u)\geq 4$;
$\Gamma$ is isomorphic to
$GT[n,\overline\infty;t(u)/2]$.
\item $\gamma=-4\cosh^2u$, where $u\in \mathcal{U}$, $(t(u),2)=1$
and $t(u)\geq 3$;
$\Gamma$ is isomorphic to
$\Tet[n,\overline\infty;t(u)]$.
\item $n\geq 5$, $(n,2)=1$, $\gamma=-(\beta+2)^2$ and
$\beta'=4(\beta+4)\cosh^2u-4$, where $u\in \mathcal{U}$ and
$t(u)\geq 4$;
$\Gamma$ is isomorphic to
$\Tet[n,t(u);3]$.
\item $\beta=-3$, $\gamma=(\sqrt{5}-3)/2$ and
$\beta'=2(7+3\sqrt{5})\cosh^2u-4$, where $u\in \mathcal{U}$ and
$t(u)\geq 3$;
$\Gamma$ is isomorphic to
$\Tet[3,t(u);5]$.
\end{enumerate}
\end{theorem}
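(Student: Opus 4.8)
Since the parameter conditions in \fullref{groups_disjoint} are precisely those of \fullref{param_disjoint}, with the first case there split according to the parity of $t(u)$, each case corresponds to one of the discrete families whose fundamental polyhedron for $\Gamma^*$ is exhibited in \fullref{construction_disjoint}. The plan is, in each case, to identify $\Gamma^*$ as a reflection group, locate $\Gamma$ inside it, and read off a presentation of $\Gamma$ by Poincar\'e's polyhedron theorem. Reading the dihedral angles off \fullref{construction_disjoint}, the polyhedra $\P(\omega,\varepsilon,\alpha,\tau)$, $\P(\omega,\varepsilon,\alpha,\xi_1)$ and $\P(\omega,\varepsilon,\alpha,\xi_2)$ are the tetrahedra $T[2,2,n;2,t(u),\overline\infty]$, $T[2,2,n;2,3,t(u)]$ and $T[2,2,3;2,5,t(u)]$ respectively. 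As their faces are paired by the reflections $R_\omega,R_\varepsilon,R_\alpha$ and $R_\tau$ (resp.\ $R_{\xi_i}$), we have $\Gamma^*=G_T$ for the corresponding $T$, and $\widetilde\Gamma$ is its orientation-preserving rotation subgroup of index $2$.

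The pivotal step is to decide where $\Gamma$ sits between $\widetilde\Gamma$ and $\Gamma^*$, and here everything is governed by the parity of the order of $h=R_\alpha R_\tau$, the square root of $[f,g]=h^2$ from \fullref{discr_disjoint}. From $f=R_\alpha R_\omega$, $g=R_\tau R_\varepsilon$ and $e=R_\varepsilon R_\omega$ one gets $hg=R_\alpha R_\varepsilon=fe^{-1}$, that is $e=g^{-1}h^{-1}f$; hence $e\in\Gamma$ if and only if $h\in\Gamma$. In cases (2), (3) and (4) the order $d$ of $h$ is odd: by hypothesis in (2), and because $h=x^2$ with $x$ of odd order $n$ (resp.\ $5$) in (3) and (4). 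Then $h=[f,g]^{(d+1)/2}\in\langle[f,g]\rangle\subseteq\Gamma$, so $e\in\Gamma$ and $\Gamma=\widetilde\Gamma$. (The same computation gives $x=h^{(n+1)/2}\in\Gamma$, which confirms $R_{\xi_i}\in\Gamma^*$ and hence $\Gamma^*=G_T$ in cases (3) and (4).) In case (1), $t(u)$ is even or $h$ is parabolic or hyperbolic, so $h\notin\langle h^2\rangle$; mapping $\Gamma^*$ onto its abelianization $(\mathbb Z/2)^4$ modulo the identifications forced by the odd bonds of the Coxeter diagram, one checks that the image of $e$ avoids the image of $\langle f,g\rangle$. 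Thus $e\notin\Gamma$, so $[\widetilde\Gamma:\Gamma]=2$ and $[\Gamma^*:\Gamma]=4$.

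With the index fixed I would build an explicit fundamental polyhedron for $\Gamma$ and apply Poincar\'e's theorem. In cases (2), (3) and (4), where $\Gamma=\widetilde\Gamma$ is the rotation subgroup of $G_T$, a fundamental domain is the doubled tetrahedron $T\cup R_\omega(T)$; its face-pairings are the rotations about the edges of $T$, and the edge cycles produce exactly the relators of the orientation-preserving subgroup, namely $\Tet[n,\overline\infty;t(u)]$, $\Tet[n,t(u);3]$ and $\Tet[3,t(u);5]$. In case (1), a fundamental domain for $\Gamma$ is the union of the four $\P$-images under a transversal $\{1,e,R_\omega,eR_\omega\}$ for $\Gamma$ in $\Gamma^*$; identifying its faces by $f$ and $g$ and running Poincar\'e, the relation $f^n=1$ arises from the axis $\alpha\cap\omega$ of $f$, the relation $[f,g]^{t(u)/2}=1$ from the edge $\alpha\cap\tau$ where $[f,g]=h^2$ rotates through $4\pi/t(u)$ and so has order $t(u)/2$, while the hyperbolic $g$ contributes no relation; this is $GT[n,\overline\infty;t(u)/2]$. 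Throughout I would maintain the $\infty/\overline\infty$ bookkeeping of \fullref{param_disjoint}: a parabolic or hyperbolic $h$ makes $[f,g]$ parabolic or hyperbolic and removes the corresponding relator in the prescribed fashion.

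I expect the main obstacle to be case (1). Verifying cleanly that $e\notin\Gamma$ (so that the index is genuinely $4$ rather than $2$) and then correctly assembling the four-tetrahedron fundamental domain with its $f$- and $g$-face identifications is delicate, all the more so because several of the tetrahedra are non-compact or of infinite volume, with edges labelled $\infty$ or $\overline\infty$ and vertices on or beyond $\partial{\mathbb H}^3$; one must be sure the version of Poincar\'e's polyhedron theorem being invoked covers this situation, as it does for the polyhedra already used for $\Gamma^*$ in \cite{Kli90}.
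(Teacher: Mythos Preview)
Your proposal is correct and follows essentially the same route as the paper: locate $\Gamma$ inside $\Gamma^*$ via the order of $h$, then apply Poincar\'e's polyhedron theorem to an explicit fundamental domain built from copies of the tetrahedron for $\Gamma^*$. Your identity $e=g^{-1}h^{-1}f$ (equivalently $h=fge$) is exactly what drives the paper's argument, and in cases (2)--(4) both you and the paper double the tetrahedron across $\omega$ and read off the tetrahedral presentation for $\widetilde\Gamma=\Gamma$.

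The one place where you diverge is case (1). You propose an abelianization argument to certify $e\notin\Gamma$ before assembling the four-copy polyhedron; this works, but it is extra. The paper simply applies Poincar\'e directly to $\P(\alpha,\tau,R_\omega(\alpha),R_\varepsilon(\tau))$ with $f$ and $g$ as face-pairings: once the cycle conditions are checked (the $f$-edge gives $f^n=1$, the $h$-edge gives $[f,g]^{p/2}=1$ for even $p$, and $g$ contributes nothing), Poincar\'e itself asserts that this polyhedron is a fundamental domain for $\langle f,g\rangle$, and the index $[\widetilde\Gamma:\Gamma]=2$ then falls out by comparing fundamental domains. So the obstacle you anticipate---verifying $e\notin\Gamma$---is bypassed rather than confronted.
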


\begin{proof}
All parameters for discrete groups in the statement of
\fullref{groups_disjoint} are described in
\fullref{param_disjoint}.
We shall obtain
a presentation for each discrete group
by using the Poincar\'e polyhedron theorem, 
see eg Epstein and Petronio \cite{EP94}.

Let $\Gamma$ have parameters as in part (1) of
\fullref{param_disjoint}.
In \fullref{construction_disjoint}, a fundamental polyhedron
for the group $\Gamma^*$ was described.
Since $\widetilde\Gamma$ is the orientation preserving index 2
subgroup of $\Gamma^*$, we can take
$\widetilde\P=\P(\varepsilon,\alpha,\tau,R_\omega(\alpha))$ as
a fundamental polyhedron for $\widetilde\Gamma$
(see \fullref{fp_disjoint_1}(a)).
In our notation $p=t(u)$.

\begin{figure}[ht!]
\centering
\begin{tabular}{cc}
\labellist
\small
\pinlabel {$f$} at 45 60
\pinlabel {$e_g$} at 136 148
\pinlabel {$e$} at 141 96
\pinlabel {\turnbox{45}{$\pi/p$}} at 77 173
\pinlabel {\turnbox{-10}{$\pi/p$}} at 55 98
\pinlabel {\turnbox{-30}{$2\pi/n$}} at 5 87
\endlabellist
\includegraphics[width=3.5 cm]{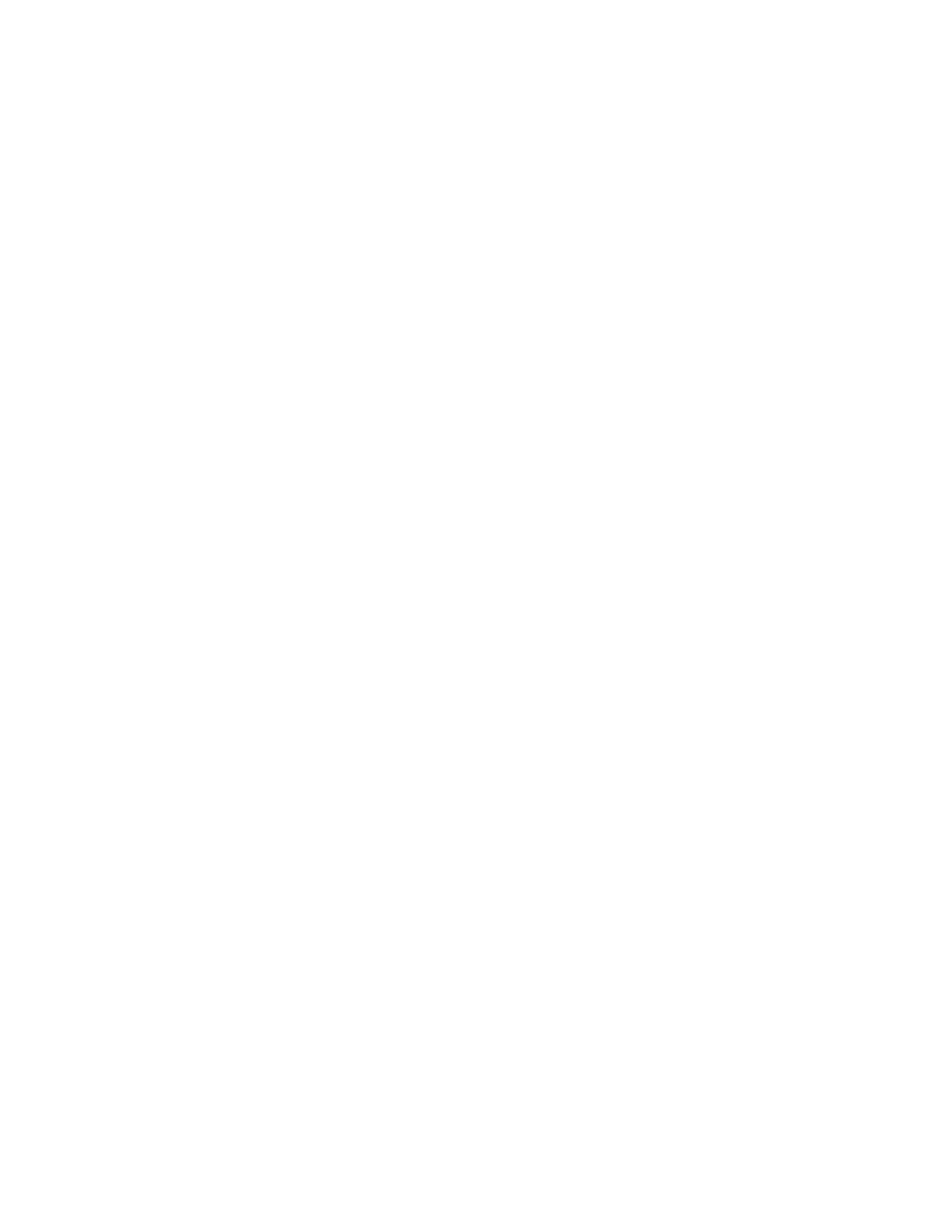}\quad &
\quad 
\labellist
\small
\pinlabel {$f$} at 53 45
\pinlabel {$g$} at 141 104
\pinlabel {\turnbox{45}{$\pi/p$}} at 62 153
\pinlabel {\turnbox{-20}{$\pi/p$}} at 79 86
\pinlabel {\turnbox{60}{$\pi/p$}} at 113 91
\pinlabel {\turnbox{-35}{$2\pi/n$}} at 15 72
\pinlabel {\turnbox{10}{$\pi/p$}} at 124 3
\endlabellist
\includegraphics[width=3.5 cm]{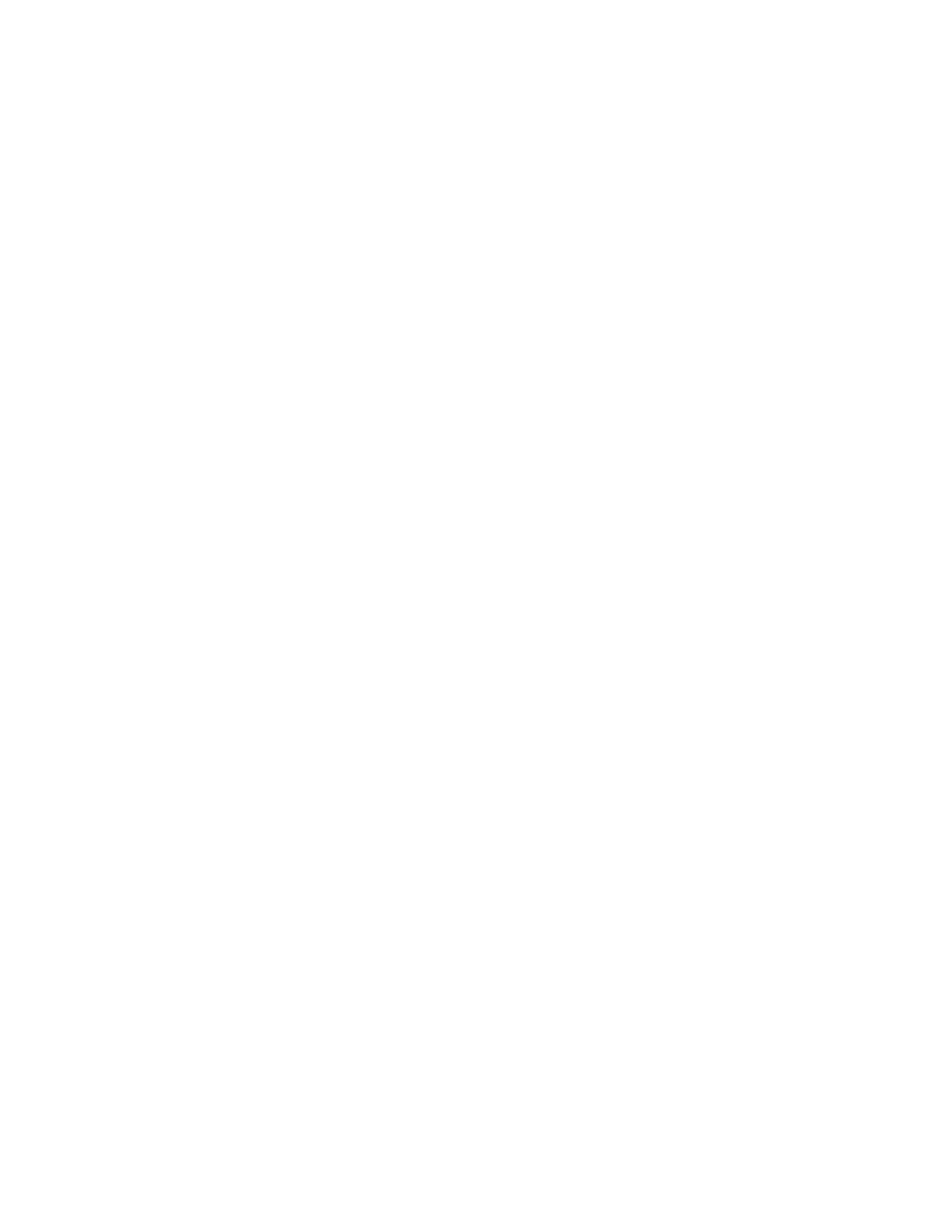}\\
(a)\quad & \quad (b)
\end{tabular}
\caption{Fundamental polyhedra for 
$\widetilde \Gamma$ and $\Gamma$ in case of disjoint axes}
\label{fp_disjoint_1}
\end{figure}

Let $e_g=R_\tau R_\omega$. It is clear that $e_g=ge$.
By applying the Poincar\'e polyhedron theorem to
$\widetilde{\mathcal{P}}$ and face pairing
transformations $e$, $e_g$ and $f$, we get
$$
\widetilde\Gamma=\langle e,e_g,f|e^2,e_g^2,f^n,(fe)^2,
(fe_g)^p\rangle,
$$
where $p$ is an integer, $\infty$ or $\overline\infty$.
Since $g=e_ge$,
$$
\widetilde\Gamma=\langle f,g,e|f^n,e^2,(fe)^2,(ge)^2,
(fge)^p\rangle.
$$
If $p$ is odd, then from the relations for $\widetilde\Gamma$ it
follows that
$e=(fgf^{-1}g^{-1})^{(p-1)/2}fg$. Hence, in this case
$\widetilde\Gamma=\Gamma$ and $\Gamma\cong
\Tet[n,\overline\infty;p]$. The isomorphism is given by $f\mapsto z$,
$g\mapsto xy^{-1}$ and $e\mapsto y$.
Identifying faces of $\widetilde{\mathcal{P}}$, we get the orbifold
$Q(\Gamma)$ shown in \fullref{gt_tet}(d').

If $p$ is even, $\infty$ or $\overline\infty$,
then $\Gamma$ is a subgroup of index~2 in $\widetilde\Gamma$.
To see this, we apply the Poincar\'e theorem to the polyhedron
$\P(\alpha,\tau,R_\omega(\alpha),R_\varepsilon(\tau))$
(see \fullref{fp_disjoint_1}(b)). Then
$$
\Gamma=\langle f,g|f^n,(fgf^{-1}g^{-1})^{p/2}\rangle
\cong GT[n,\overline\infty;p/2].
$$
The orbifold $Q(\Gamma)$ is shown in
\fullref{gt_tet}(a).

Now consider the groups with parameters from part~(2) of
\fullref{param_disjoint}.
In this case $t(u)=q$ from \fullref{discr_disjoint}.
By applying the Poincar\'e theorem to the polyhedron
$\P(\varepsilon,\alpha,\xi_1,R_\omega(\alpha),R_\omega(\xi_1))$
and the group generated by $f$, $e$ and $s$, where
$s=R_\omega R_{\xi_1}$, we get the following presentation
for the group $\langle f,e,s\rangle$:
$$
\langle f,e,s|f^n,e^2,s^3,(fe)^2,(fs)^2,(se)^q\rangle.
$$
Since $x=R_\kappa R_\tau$, we have $x^2=h$ and $x=fs^{-1}$. Therefore,
$g=e_ge=f^{-1}he=f^{-1}x^2e=f^{-1}(fs^{-1})^2e=s^{-1}fs^{-1}e$
and hence $\Gamma\subseteq\langle f,e,s\rangle$.

Since $h^n=1$, $n$ is odd and $h^2=[f,g]$,
we have that $h=[f,g]^{-(n-1)/2}\in\Gamma$. Further,
$e_g=f^{-1}h$ and so $e=e_gg=f^{-1}hg\in\Gamma$. From $x^n=1$ we have
that $x=h^{-(n-1)/2}\in\Gamma$ and, therefore,
$s=x^{-1}f\in\Gamma$.
Then $\langle f,e,s\rangle\subseteq\Gamma$ and so we have shown that
$\Gamma=\langle f,e,s\rangle$.

Mapping $x\mapsto s^{-1}f^{-1}$, $y\mapsto fe$, $z\mapsto f$,
we see that $\langle f,e,s\rangle=\Gamma$ is isomorphic to the group
$\Tet[n,q;3]$. Therefore, $\Gamma\cong \Tet[n,q;3]$, where $q\geq 4$
is an integer, $\infty$ or $\overline\infty$.

The orbifold $Q(\Gamma)$ is shown in \fullref{gt_tet}(d').

Similarly, one can show that the groups with parameters
from part (3) of \fullref{param_disjoint} are
isomorphic to $\Tet[3,t(u);5]$, where $t(u)\geq 3$
is an integer, $\infty$ or $\overline\infty$.
\end{proof}

Let $T(p)$, $ p\in{\mathbb Z}$, be a Seifert fibered solid torus
obtained from a trivially
fibered solid torus $D^2\times {\mathbb S}^1$ by cutting it along
$D^2\times \{x\}$ for some $x\in {\mathbb S}^1$, rotating one of the
discs through $2\pi/p$ and gluing back together.

Denote by
$\mathcal{S}(p)$ a space obtained by gluing $T(p)$ with its mirror
symmetric copy along their boundaries fiber to fiber.
Clearly, $\mathcal{S}(p)$ is homeomorphic to 
${\mathbb S}^2\times {\mathbb S}^1$
and is $p$-fold covered by trivially fibered
${\mathbb S}^2\times {\mathbb S}^1$.
There are two critical fibers\footnote{A critical fiber is
also called a {\em singular\/} fiber. We use the word `critical'
in order not to confuse it with components of the singular
sets of orbifolds.}
in $\mathcal{S}(p)$ whose `length' is
$p$ times shorter
than the `length' of a regular fiber.

Next two theorems describe presentations and orbifolds for
all truly spatial discrete groups
$\Gamma=\langle f,g\rangle$ whose generators have {\em
intersecting axes\/},
$g$ is hyperbolic and $f$ is primitive elliptic of even order
(\fullref{groups_even})
or odd order (\fullref{groups_odd}).
In both theorems there are series of orbifolds embedded into
$\mathbb S^3$ and
${\mathbb S}^2\times {\mathbb S}^1$; in case when $f$ has odd order some
orbifolds
are embedded into $\mathbb{R}P^3$.

Again for non-primitive elliptics we can use recalculation formulas
for parameters to apply \fullref{groups_even} 
or \fullref{groups_odd}
(see the paragraph before \fullref{groups_disjoint}).

\begin{theorem}\label{groups_even}
Let $(\Gamma;f,g)$ be a discrete $\mathcal{RP}$ group
so that $\beta=-4\sin^2(\pi/n)$, $n\geq 4$, $(n,2)=2$,
$\beta'\in(0,+\infty)$ and
$\gamma\in(0,-\beta\beta'/4)$.
Then $\gamma=4\cosh^2u+\beta$, where $u\in \mathcal{U}$ and
$1/n+1/t(u)<1/2$,
and one of the following occurs.
\begin{enumerate}
\item $(t(u),2)=2$  and $\beta'=4(\cosh^2v)/\gamma-4\gamma/\beta$,
where $v\in \mathcal{U}$, $t(v)\geq 3$ and $(t(v),2)=1$;
$\Gamma$ is isomorphic to $PH[n,t(u)/2,t(v)]$.
\item $(t(u),2)=2$ and $\beta'=4(\cosh^2v)/\gamma-4\gamma/\beta$,
where $v\in \mathcal{U}$, $t(v)\geq 4$ and $(t(v),2)=2$;
$\Gamma$ is isomorphic to $\mathcal{S}_2[n,t(u)/2,t(v)/2]$.
\item $(t(u),2)=1$ and
$\beta'=4(\gamma-\beta)(\cosh^2v)/\gamma-4\gamma/\beta$,
where $v\in \mathcal{U}$, $t(v)\geq 3$ and $(t(v),2)=1$;
$\Gamma$ is isomorphic to $P[n,t(u),t(v)]$.
\item $(t(u),2)=1$ and
$\beta'=4(\gamma-\beta)(\cosh^2v)/\gamma-4\gamma/\beta$,
where $v\in \mathcal{U}$, $t(v)\geq 4$ and $(t(v),2)=2$;
$\Gamma$ is isomorphic to $GTet_1[n,t(u),t(v)/2]$.
\item $\beta=-2$, $(t(u),2)=1$, $t(u)\geq 5$ and
$\beta'=\gamma^2+4\gamma$;
$\Gamma$ is isomorphic to $\Tet[4,t(u);3]$.
\end{enumerate}
\end{theorem}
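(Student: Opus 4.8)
The plan is to follow the proof of \fullref{groups_disjoint} in outline. The admissible parameter triples are already determined by \fullref{param_intersect} and rows $P_1$--$P_3$ of \fullref{table_param}, so it remains only to pass from the fundamental polyhedron of $\Gamma^*$ described in \fullref{construction_intersect} to a fundamental polyhedron for $\Gamma=\langle f,g\rangle$ itself, to apply the Poincar\'e polyhedron theorem \cite{EP94} to read off a presentation, and to match that presentation with one of the abstract groups $PH$, $\mathcal{S}_2$, $P$, $GTet_1$, $\Tet$ defined above. Throughout I abbreviate $m=t(u)$, the parameter of the angle $2\pi/m$ between $\alpha$ and $\alpha'$, and $\ell=t(v)$.

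First I would record the tower $\Gamma\leq\widetilde\Gamma\leq\Gamma^*$, with $[\Gamma^*:\widetilde\Gamma]=2$ and $[\widetilde\Gamma:\Gamma]\leq 2$. Since each polyhedron of \fullref{fp_even} is an exact fundamental polyhedron for $\Gamma^*$ bounded by reflection planes, and $f=R_\omega R_\alpha$ already exhibits $R_\alpha=R_\omega f\in\Gamma^*$, the group $\Gamma^*$ coincides with the reflection group in the faces of that polyhedron, whose relations are read off from the labelled dihedral angles. The parity of $m$ selects which polyhedron to start from, exactly as in \fullref{construction_intersect}: for $P_1$ ($m$ even) one uses $\P(\alpha,\omega,\alpha',\delta,\varepsilon)$ directly; for $P_2$ ($m$ odd) one passes to the $\xi$--bisected polyhedron $\P(\alpha,\delta,\xi,\varepsilon,\omega)$; and for $P_3$ ($n=4$, $\beta=-2$) one uses the decomposition into copies of the tetrahedron $T[2,2,4;2,3,m]$. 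A fundamental polyhedron for the orientation-preserving subgroup $\widetilde\Gamma$ is then obtained by doubling the chosen polyhedron across the reflection plane $\omega$, so that each face meeting $\omega$ non-orthogonally acquires its mirror image, precisely as $\widetilde\P=\P(\varepsilon,\alpha,\tau,R_\omega(\alpha))$ was formed in \fullref{groups_disjoint}.

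Next, for each doubled domain I would apply the Poincar\'e theorem with $f$ and the half-turns $e$, $e_g$ (where $g=e_ge$) as face-pairing transformations --- together with the extra bisector pairing in the $P_2$ case --- to obtain a presentation of $\widetilde\Gamma$. I would then determine $[\widetilde\Gamma:\Gamma]\in\{1,2\}$ by the same commutator bookkeeping as in \fullref{groups_disjoint}, where $e$ is shown to be a word in $f$ and $g$ precisely when an appropriate parity condition holds; here the relevant condition is the parity of $\ell=t(v)$, so that $P_1$ and $P_2$ each split into the two subcases of the statement. When $\ell$ is odd one has $\Gamma=\widetilde\Gamma$, the half-turn survives as an order-two generator, and the presentations are $PH[n,t(u)/2,t(v)]$ (part 1) and $P[n,t(u),t(v)]$ (part 3); when $\ell$ is even one has $[\widetilde\Gamma:\Gamma]=2$, so one doubles the domain once more, reapplies the Poincar\'e theorem to $\langle f,g\rangle$, and obtains $\mathcal{S}_2[n,t(u)/2,t(v)/2]$ (part 2) and $GTet_1[n,t(u),t(v)/2]$ (part 4). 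The degenerate case $P_3$ gives part 5, $\Tet[4,t(u);3]$, by running the same procedure on the tetrahedral decomposition. In every case the isomorphism is exhibited explicitly, by assigning the abstract generators words in $f$, $g$ and $e$ and checking that each defining relator is killed, as the maps $f\mapsto z$, $g\mapsto xy^{-1}$, $e\mapsto y$ and $x\mapsto s^{-1}f^{-1}$, $y\mapsto fe$, $z\mapsto f$ were used in \fullref{groups_disjoint}.

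The \emph{main obstacle}, exactly as in \fullref{groups_disjoint}, is to upgrade the inclusions produced by the Poincar\'e theorem to the equality $\Gamma=\langle\cdots\rangle$: the theorem naturally presents $\widetilde\Gamma$ (or $\Gamma^*$), and one must check both that the claimed generators lie in $\Gamma=\langle f,g\rangle$ and that they generate all of it. The forward inclusion rests on writing the relevant half-turns and bisector reflections as $f$ and $g$ composed with suitable powers of the commutator $[f,g]$ --- the analogue of the identities $h=[f,g]^{-(n-1)/2}$ and $e=(fgf^{-1}g^{-1})^{(p-1)/2}fg$ used before --- and it is exactly here that the parity of $t(v)$, and for the $\xi$--bisection the parity of $t(u)$, becomes decisive. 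A secondary difficulty is to carry all five computations out uniformly across the finite, parabolic ($\infty$) and disjoint ($\overline\infty$) values of $m$ and $\ell$, so that the single closed formula for $\beta'$ in the statement simultaneously covers the compact, cusped and infinite-volume polyhedra; this is absorbed by the conventions on $t(u)$ and on the symbols $\infty$, $\overline\infty$ fixed in \fullref{sec:prelim}.
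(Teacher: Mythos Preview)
Your plan is correct and matches the paper's proof: double the $\Gamma^*$-polyhedron across $\omega$ to get a domain for $\widetilde\Gamma$, apply Poincar\'e, split on the parity of $\ell=t(v)$, and for even $\ell$ double once more (across $\delta$) to obtain a domain for $\Gamma$. The only adjustment you will need when carrying it out is that for even $n$ the natural face-pairing involution on $\varepsilon$ is $e_2=f^{n/2}e$ rather than $e$, and the descent from $\widetilde\Gamma$ to $\Gamma$ in the even-$\ell$ subcases runs through the $\pi$-loxodromic $L=e_ge_2=gf^{n/2}$ (and, in the $P_2$ case, through $z=R_\omega R_\xi$, which lies in $\Gamma$ because $m$ is odd via $zfz=gfg^{-1}$) rather than through commutator-power identities.
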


\begin{proof}
The idea of the proof is the same as for \fullref{groups_disjoint}.
We refer now to the part of \fullref{construction_intersect}
where $n$ is even.

{\bf 1.} \qua
Let $\Gamma$ have parameters as in row $P_1$ of
\fullref{table_param}.
A fundamental polyhedron
$\P(\alpha,\alpha',\delta,\varepsilon,\omega)$
for $\Gamma^*$ is shown in \fullref{fp_even}(a).
A fundamental polyhedron for $\widetilde\Gamma$ is
$\P(\alpha,\alpha',R_\omega(\alpha),R_\omega(\alpha'),\delta,
\varepsilon)$,
whose faces are identified by
face pairing transformations $f$, $f'=R_\omega R_{\alpha'}$,
$e_2=f^{n/2}e$ and $e_g$.
(We doubled the
fundamental polyhedron for $\Gamma^*$ shown in \fullref{fp_even}(a).)
By the Poincar\'e polyhedron theorem, we get that
$$\widetilde\Gamma=
\langle f,f',e_g,e_2|f^n,( f')^n,e_g^2,e_2^2,(fe_2)^2,
e_gf^{-1}e_gf',( f^{-1}f')^{m/2},(e_2f')^\ell\rangle.
$$

Since $e_g=ge$ and $e_2=f^{n/2}e$, we have that
$$\widetilde\Gamma=
\langle f,g,e|f^n,e^2,(fe)^2,(ge)^2,(gfg^{-1}f)^{m/2},
(f^{n/2}g^{-1}fge)^\ell\rangle.
$$
If $\ell$ is odd, $e\in\langle f,g\rangle$.
Therefore, in this case
$\Gamma=\widetilde\Gamma\cong PH[n,m/2,\ell]$, where $m/2$ is
an integer, $\infty$ or $\overline\infty$ and $\ell$ is odd;
the orbifold $Q(\Gamma)$ is shown in \fullref{gt_tet}(b).

Suppose now that $\ell$ is even.
Consider the polyhedron
$\P'$ bounded by $\alpha$, $\alpha'$, $\varepsilon$,
$R_\omega(\alpha)$, $R_\omega(\alpha')$ and $e_g(\varepsilon)$.
The $\pi$--loxodromic element $L=e_ge_2$ identifies the faces
of $\P'$ lying in $\varepsilon$ and $e_g(\varepsilon)$.
Applying the Poincar\'e polyhedron theorem to $\P'$ and
the transformations $f$, $f'$ and $L$, we get that
$\langle f,f', L\rangle$ is discrete and $\P'$ is a fundamental polyhedron
for it.
It follows, in particular, that $|\widetilde\Gamma:\Gamma|=2$ for
even $\ell$.
Moreover, $\langle f,f', L\rangle$ has
the following presentation:
$$
\langle f,f',L|f^n,(f')^n,(f^{-1}f')^{m/2},L^{-1}f'Lf,
(L^{-1}fLf')^{\ell/2}\rangle.
$$
Since $f'=Lf^{-1}L^{-1}$, the group
$\langle f,f',L\rangle$ is generated by $f$ and $L$ and
is isomorphic to ${\mathcal S}_2[n,m/2,\ell/2]$.
Further, since $L=e_ge_2=gf^{n/2}$, the group
$\langle f,L\rangle$ coincides with $\Gamma$.
Therefore, $\Gamma\cong{\mathcal S}_2[n,m/2,\ell/2]$,
where $m/2$ and $\ell/2$ are
integers, $\infty$ or $\overline\infty$;
the orbifold $Q(\Gamma)$ is shown in \fullref{s2s1}(a),
see also \fullref{rem_fibre} and \fullref{rem_cover} after the proof.

{\bf 2.} \qua
Now let $\Gamma$ have parameters as in row $P_2$ of
 \fullref{table_param}.
A fundamental polyhedron for $\widetilde\Gamma$ is
$\P(\alpha,\delta,\varepsilon,\xi,R_\omega(\alpha))$, whose faces
are
identified by $f$, $e_2$,
$y=R_\delta R_\omega$ and $z=R_\omega R_\xi$.  Then
$$
\widetilde\Gamma=
\langle f,e_2,y,z|f^n,e_2^2,y^2,z^2,(yz)^2,(yf)^2,(fe_2)^2,
(ze_2)^k,(fz)^m\rangle.
$$
Using the facts that $e_g=yz=ge$ and $yfy=f^{-1}$, we get
$zfz=(zy)(yfy)(yz)=gef^{-1}ge=gfg^{-1}$.
Therefore, since $m$ is odd, $z=z(f,g)$.
Furthermore, since $e_2=f^{n/2}e$,
$\langle f,e_2,y,z\rangle=\widetilde\Gamma$.
Similarly to part 1 above, if $k$ is odd then
$\widetilde\Gamma=\Gamma$ since in this case $e=e(f,g)\in\Gamma$.
Further, the group $\langle f,e_2,y,z\rangle$ is obviously isomorphic
to $P[n,m,k]$, where $m<\infty$ is also odd.
The orbifold $Q(\Gamma)$ is
shown in \fullref{gt_tet}(e).

If $k$ is even, $\Gamma$ is an index 2 subgroup in
$\widetilde\Gamma$.
The polyhedron
$\P(\alpha,\varepsilon,\xi,R_\omega(\alpha),R_\delta(\varepsilon))$,
whose faces are identified by $f$, $z$ and
$u=ye_2=zgf^{n/2}\in\Gamma$, satisfies the hypotheses of the Poincar\'e
polyhedron theorem. Then $\langle f,z,u\rangle$ is discrete and
has presentation
$$
\langle f,z,u|f^n,z^2,(zf)^m,[z,u]^{k/2},[f,u]\rangle
$$
and $\P(\alpha,\varepsilon,\xi,R_\omega(\alpha),R_\delta(\varepsilon))$
is a fundamental polyhedron for this group.

Obviously, $\langle f,z,u\rangle$ is isomorphic to $GTet_1[n,m,k/2]$.
On the other hand, since $u=zgf^{n/2}$, we have
$\langle f,z,u\rangle=\langle f,g,z\rangle$. Moreover, 
$z=z(f,g)$ because $m$ is odd. Hence,
$\langle f,g,z\rangle= \Gamma$ and, therefore,
$\Gamma$ is isomorphic to $GTet_1[n,m,k/2]$, where
$m<\infty$ is odd and
$k/2$ is an integer, $\infty$ or $\overline\infty$.
The orbifold $Q(\Gamma)$ is shown in \fullref{s2s1}(d).

{\bf 3.} \qua
If $\Gamma$ has parameters as in row $P_3$ of
\fullref{table_param},
it is easy to show that $\Gamma=\widetilde\Gamma$ and $\Gamma$ is
isomorphic
to a tetrahedron group $\Tet[4,m;3]$, where $5\leq m<\infty$ is
odd.
\end{proof}

\begin{figure}[ht!]
\centering
\begin{tabular}{cc}
\labellist
\small
\pinlabel {$q$} at 98 22
\pinlabel {$m$} at 125 26
\pinlabel {$n$} at 132 16
\endlabellist
\includegraphics[width=6 cm]{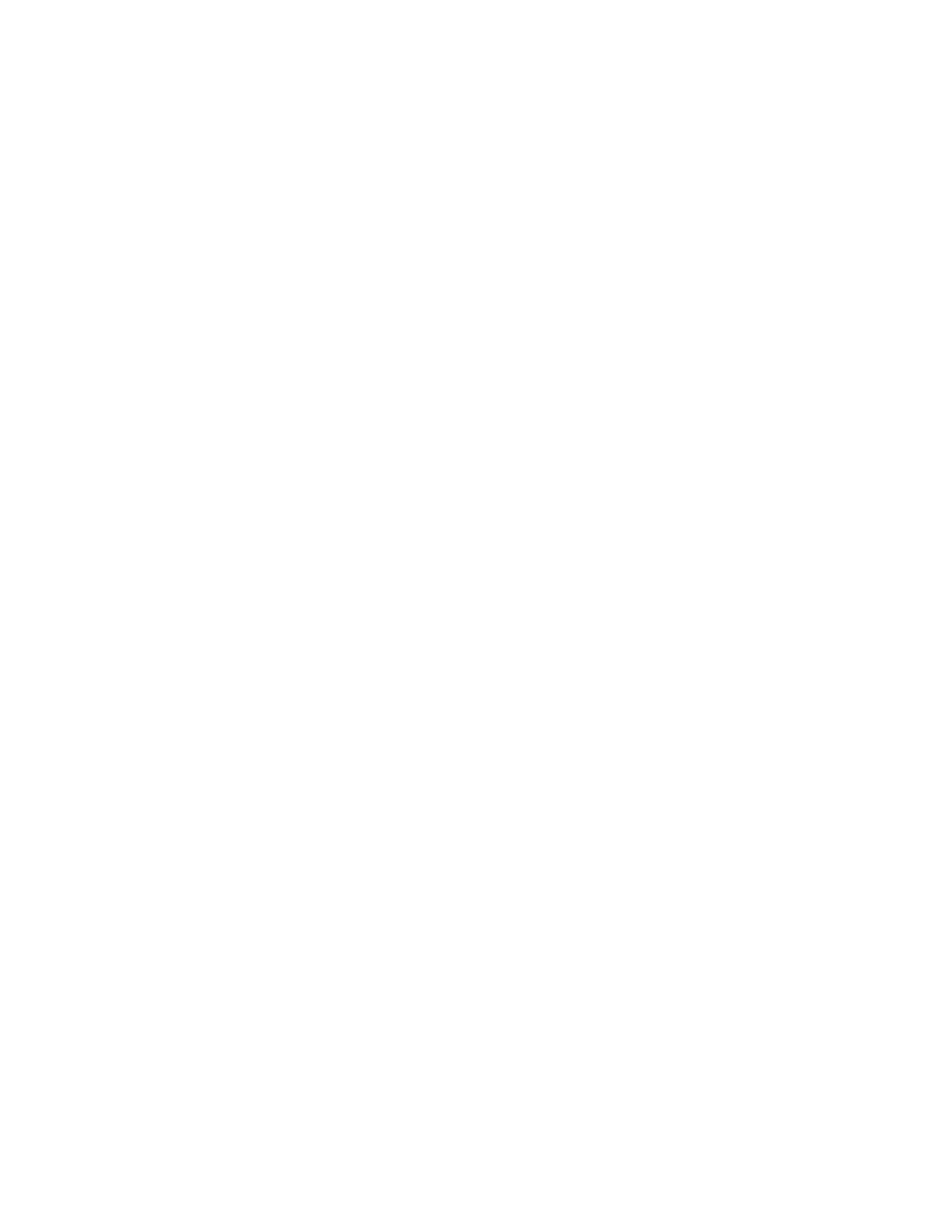}&
\labellist
\small
\pinlabel {$q$} at 100 31
\pinlabel {$m$} at 56 37
\pinlabel {$n$} at 80 6
\pinlabel {$2$} at 65 20
\endlabellist
\includegraphics[width=6 cm]{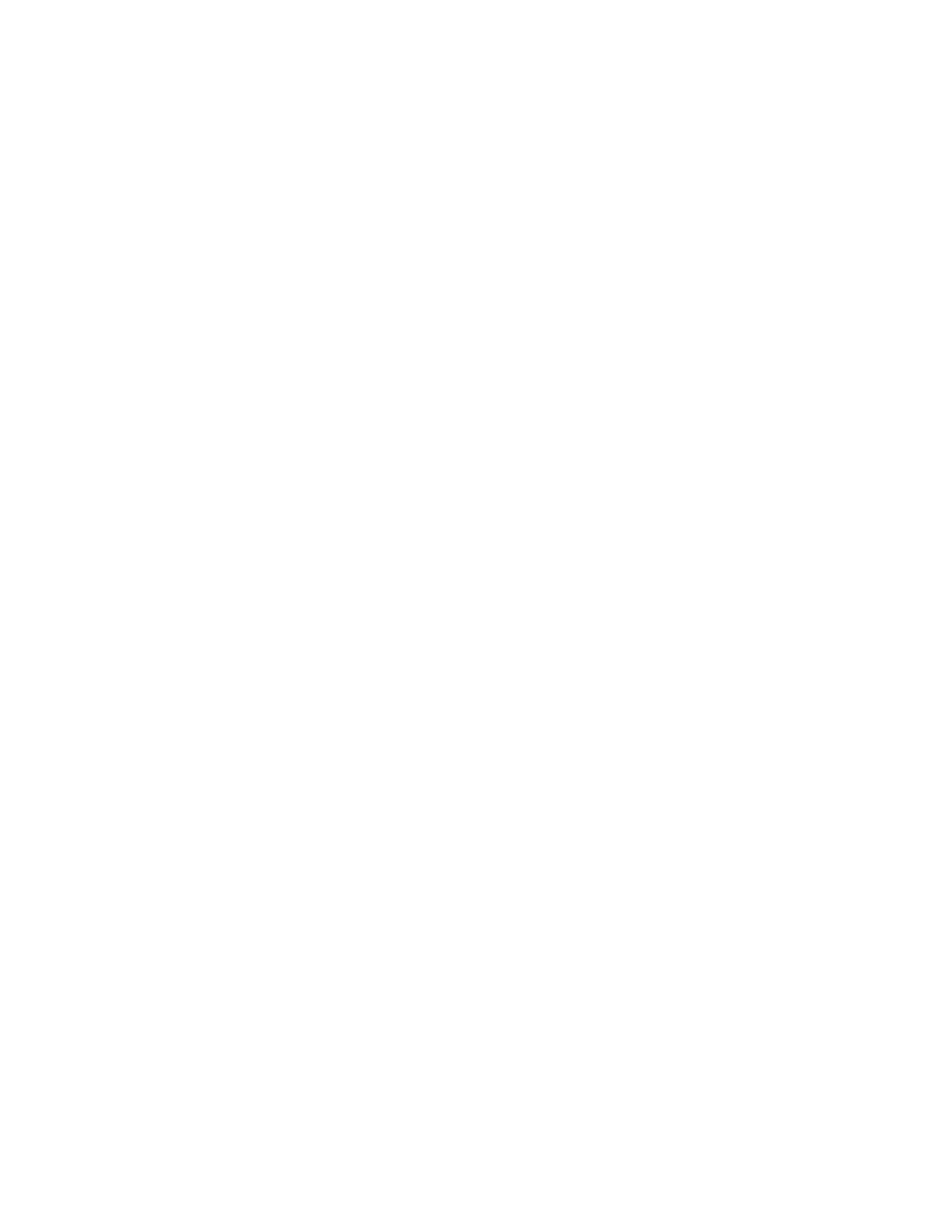}\\
(a) Orbifolds embedded in $\mathcal{S}(2)$; &
(b) Orbifolds embedded in $\mathcal{S}(2)$;\\
$\pi_1^{\orb}(Q)\cong \mathcal{S}_2[n,m,q]$ &
$\pi_1^{\orb}(Q)\cong GTet_2[n,m,q]$\\
\labellist
\small
\pinlabel {$q$} at 100 32
\pinlabel {$m$} at 57 38
\pinlabel {$n$} at 70 19
\endlabellist
\includegraphics[width=6 cm]{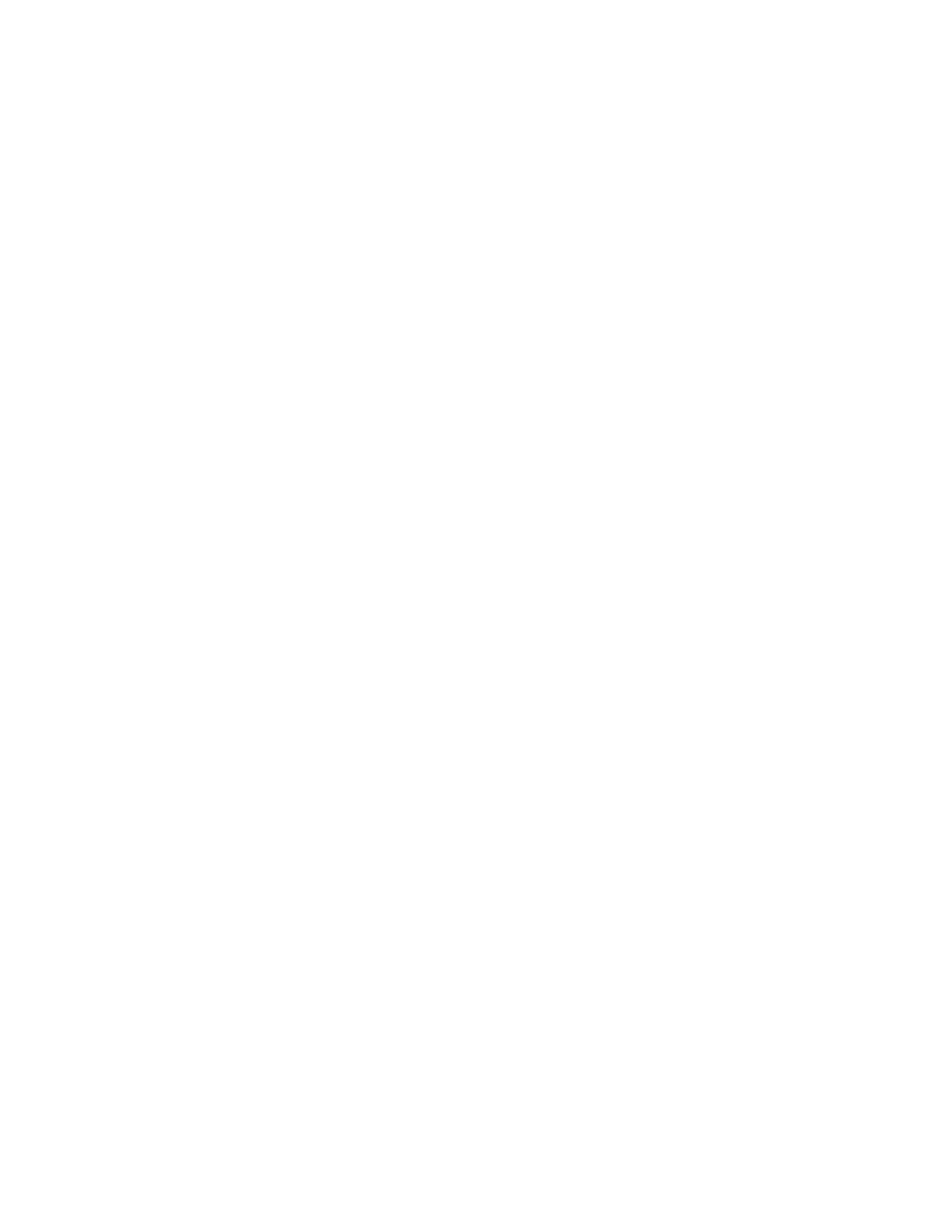}&
\labellist
\small
\pinlabel {$q$} at 98 22
\pinlabel {$m$} at 156 23
\pinlabel {$n$} at 131 49 
\pinlabel {$2$} at 146 35
\endlabellist
\includegraphics[width=6 cm]{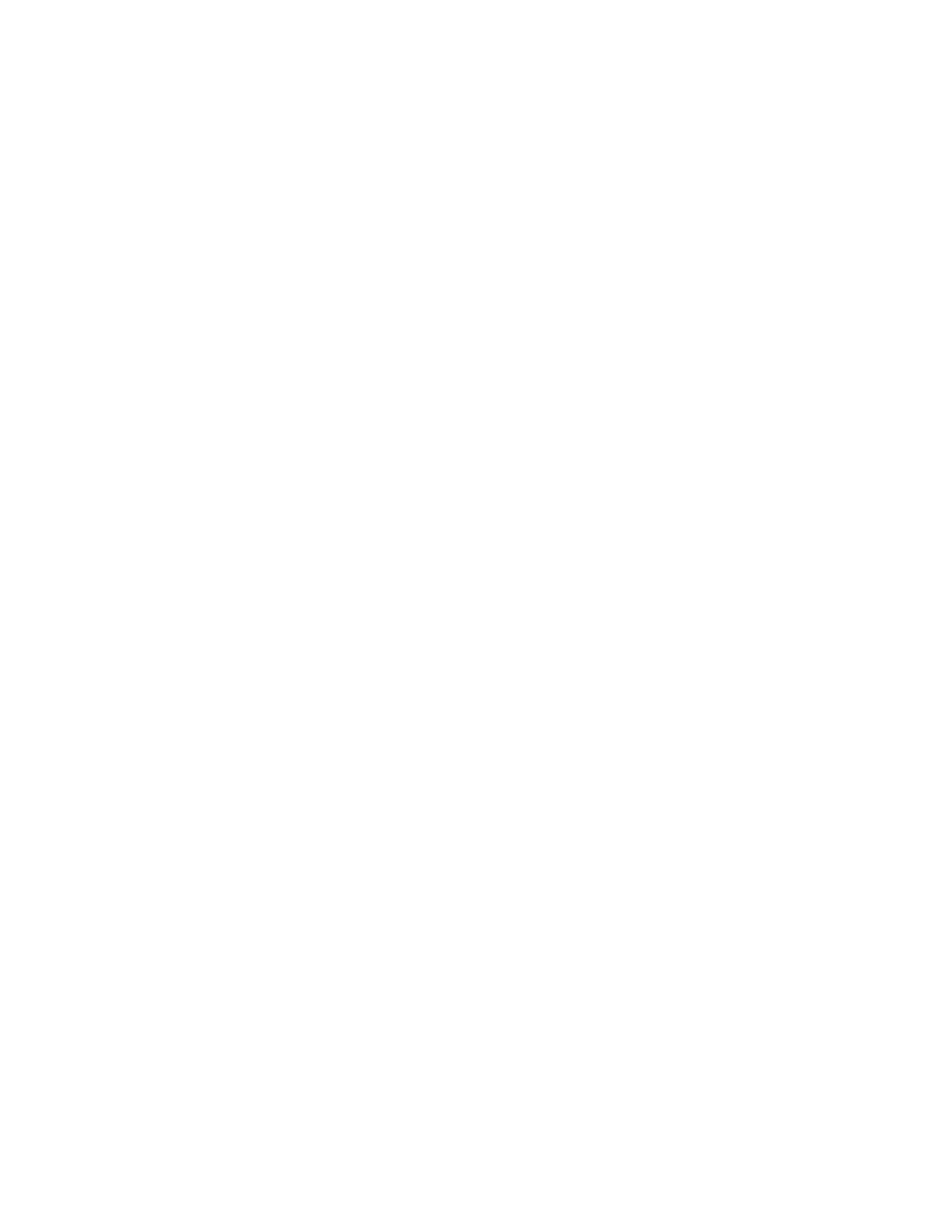}
\\
(c) Orbifolds embedded in $\mathcal{S}(3)$; &
(d) Orbifolds embedded in ${\mathbb S}^2\times {\mathbb S}^1$;\\
$\pi_1^{\orb}(Q)\cong \mathcal{S}_3[n,m,q]$ &
$\pi_1^{\orb}(Q)\cong GTet_1[n,m,q]$
\end{tabular}
\caption{Orbifolds embedded in Seifert fiber spaces;
only the torus that contains cone points or boundary components
is shown.}\label{s2s1}
\end{figure}

\begin{rem}\label{rem_fibre}
Note that when $Q=Q(\mathcal{S}_2[n,m/2,\ell/2])$, due to the action
of the face pairing transformation of the fundamental polyhedron,
$Q$ is embedded in a Seifert fiber space $\mathcal{S}(2)$
and the singular set is placed in $\mathcal{S}(2)$ in such
a way that the axis of order $m$ (if $m<\infty$)
lies on a critical fiber
of $\mathcal{S}(2)$ and the axis of order $n$ lies on a regular one.
In \fullref{s2s1}(a) we draw only the solid torus that
contains
singular points (or boundary components). The other fibered torus
is
meant to be attached and is not shown.
\end{rem}

\begin{rem}\label{rem_cover}
As an illustration of the orbifold covering 
$Q(\Gamma)\to Q(\widetilde \Gamma)$,
consider the case when parameters of $\Gamma$ are as in row
$P_1$ of \fullref{table_param} 
and $t(u)=\ell$ is even.
Denote
$Q=Q(\Gamma)$ and $\widetilde Q=Q(\widetilde \Gamma)$,
where $\Gamma\cong \mathcal{S}_2[n,m/2,\ell/2]$ and
$\widetilde\Gamma\cong PH[n,m/2,\ell]$.
Let us show the structure
of the orbifold covering $\pi\co Q\to\widetilde Q$.
Assume for simplicity that $m,\ell<\infty$.
Draw the orbifold $Q$ (same as in \fullref{s2s1}(a),
but with the change of indices $q\mapsto \ell/2$,
$m\mapsto m/2$) in the spherical shell
${\mathbb S}^2\times I$ as shown in \fullref{covering};
keep in mind that the inner and outer spheres
are identified. In \fullref{covering}, the labels on
the upper left and the lower right pictures are integers
and denote the cone singularities. The labels on the central
pictures (which show the structure of the covering) are of the
form $2\pi/k$; they indicate cone/dihedral angles.

Let $\sigma$ be a circle
in the $xy$-plane such that the inversion in the sphere
for which $\sigma$ is a big circle identifies the inner and the
outer spheres.
Let $s$ be the orientation preserving automorphism of $Q$ induced
by the composition of this inversion and the reflection in the
$xy$--plane. Thus,
$s$ is of order $2$ with the axis $\sigma$.
Then $s$ determines $\pi\co Q\to \widetilde Q$
and $\langle \pi_1^{\orb}(Q),s\rangle=\pi_1^{\orb}(\widetilde Q)$.
The underlying space of $\widetilde Q$ is ${\mathbb S}^3$.
\end{rem}

\begin{figure}[ht!]
\centering
\labellist
\small
\pinlabel {$n$} at 11 102
\pinlabel {\footnotesize{$m/2$}} at 22 98
\pinlabel {\footnotesize{$l/2$}} at 36 92
\pinlabel {$n$} at 65 110
\pinlabel {$=$} at 87 105
\pinlabel {$z$} at 140 151
\pinlabel {\scriptsize{$2\pi/n$}} at 144 132
\pinlabel {$\sigma$} at 153 113
\pinlabel {\scriptsize{$2\pi/l$}} at 135 105
\pinlabel {\scriptsize{$4\pi/m$}} at 117 96
\pinlabel {\scriptsize{$2\pi/l$}} at 148 86
\pinlabel {\scriptsize{$2\pi/n$}} at 124 79
\pinlabel {$x$} at 102 76
\pinlabel {$2$} at 140 62
\pinlabel {$y$} at 182 109
\pinlabel {$Q=Q(\Gamma)$} at 190 138
\pinlabel {$\pi/2$} at 157 53
\pinlabel {\scriptsize{$\pi/2$}} at 133 39
\pinlabel {$s$} at 114 42
\pinlabel {\scriptsize{$\pi/l$}} at 135 33
\pinlabel {$\sigma$} at 159 33
\pinlabel {\scriptsize{$2\pi/m$}} at 118 24 
\pinlabel {\turnbox{-40}{\scriptsize{$\pi/2$}}} at 100 32
\pinlabel {\scriptsize{$\pi/l$}} at 153 16
\pinlabel {\scriptsize{$2\pi/n$}} at 143 7
\pinlabel {$=$} at 185 34
\pinlabel {$\widetilde{Q}=Q(\widetilde{\Gamma})$} at 228 67
\pinlabel {$m/2$} at 216 23
\pinlabel {$n$} at 213 7
\pinlabel {$l$} at 256 13
\endlabellist
\includegraphics[width=12 cm]{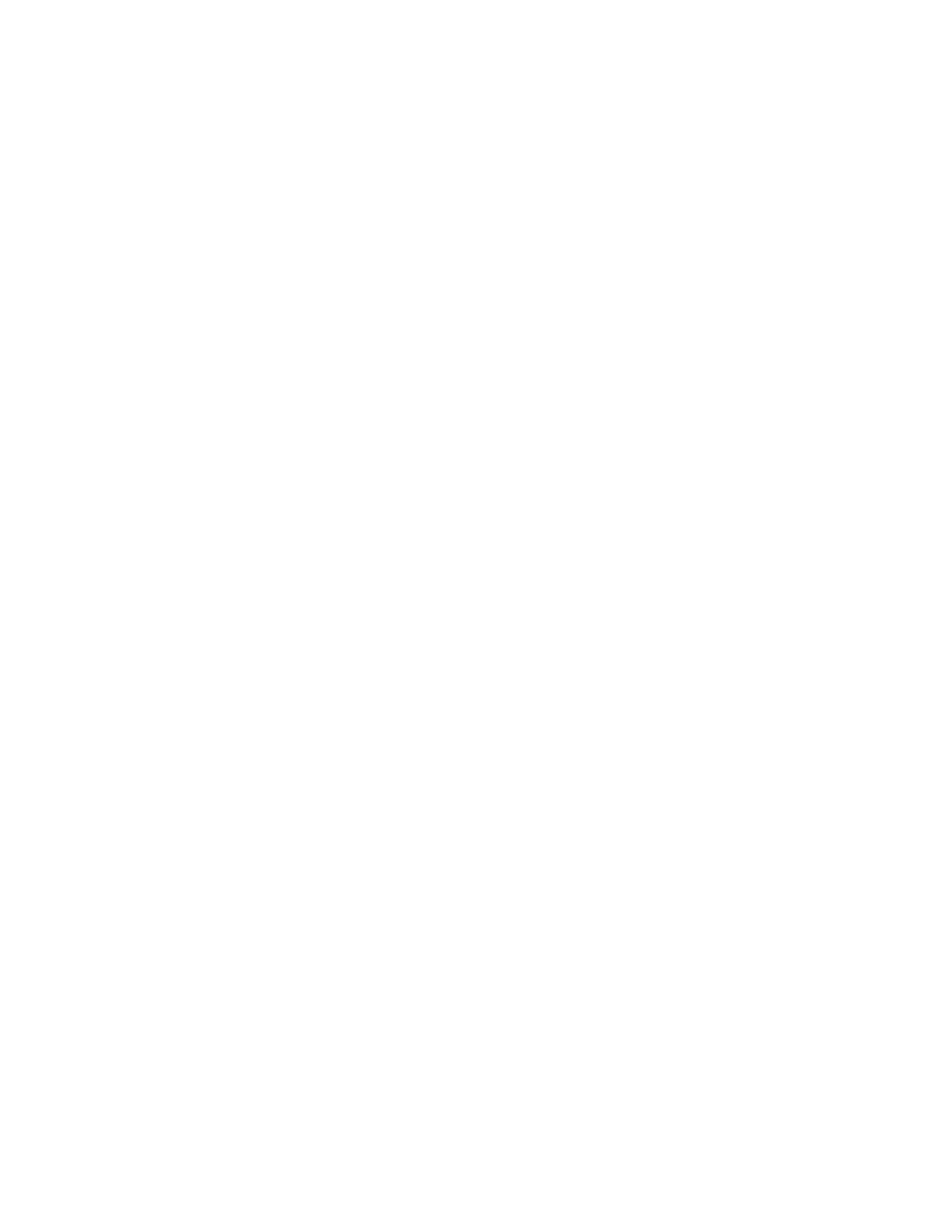}
\caption{Branched covering 
$\pi\co Q\to\widetilde Q$}\label{covering}
\end{figure}

\begin{theorem}\label{groups_odd}
Let $(\Gamma;f,g)$ be a discrete $\mathcal{RP}$ group
so that
$\beta=-4\sin^2(\pi/n)$, $n\geq 3$, $(n,2)=1$,
$\beta'\in(0,+\infty)$ and $\gamma\in(0,-\beta\beta'/4)$.
Then one of the following occurs.
\begin{enumerate}
\item
$\gamma=4\cosh^2u+\beta$,
where $u\in \mathcal{U}$, $(t(u),2)=2$, $1/n+1/t(u)<1/2$,
and
$\beta'=\frac2{\gamma}\left(\cosh v-\cos(\pi/n)\right)-
\frac2{\gamma\beta}
\left((\gamma-\beta)^2\cos(\pi/n)+\gamma(\gamma+\beta)\right)$,
where $v\in \mathcal{U}$;
$\Gamma$ is isomorphic to $\mathcal{S}_3[n,t(u)/2,t(v)]$.
\item
$\gamma=4\cosh^2u+\beta$,
where $u\in \mathcal{U}$, $(t(u),2)=1$, $1/n+1/t(u)<1/2$,
and
$\beta'=\frac{2(\gamma-\beta)}\gamma\cosh v-
\frac2{\gamma\beta}
\left((\gamma-\beta)^2\cos(\pi/n)+\gamma(\gamma+\beta)\right)$,
where $v\in \mathcal{U}$;
$\Gamma$ is isomorphic to $GTet_2[n,t(u),t(v)]$.
\item
$n\geq 7$, $\gamma=(\beta+4)(\beta+1)$ and
$\beta'=2(\beta+2)^2(\cosh v-\cos(\pi/n))/(\beta+1)-
2\left(\beta^2+6\beta+4\right)/\beta$,
where $v\in \mathcal{U}$;
$\Gamma$ is isomorphic $GTet_2[n,3,t(v)]$.
\item
$\beta=-3$, $\gamma=2\cos(2\pi/m)-1$, where $m\geq 7$, $(m,2)=1$,
and
$\beta'=2(\gamma^2+2\gamma+2)/\gamma$;
$\Gamma$ is isomorphic to $GTet_1[m,3,2]$.
\item
$n\geq 5$, $(n,3)=1$, $\gamma=\beta+3$ and
$\beta'=2\left((\beta-3)\cos(\pi/n)-2\beta-3\right)/\beta$;
$\Gamma$ is isomorphic to $H[2;3,n;2]$.
\item
$(\beta,\gamma,\beta')=
((\sqrt{5} -5)/2,(\sqrt{5}\pm 1)/2,3(\sqrt{5}+1)/2)$;
$\Gamma$ is isomorphic to $H[2;2,5;3]$.
\item
$(\beta,\gamma,\beta')=(-3,(\sqrt{5}\pm 1)/2,\sqrt{5})$ or
$(\beta,\gamma,\beta')=((\sqrt{5}-5)/2,(\sqrt{5}-1)/2,\sqrt{5})$,
or
$(\beta,\gamma,\beta')=((\sqrt{5}-5)/2,\sqrt{5}+2,(5\sqrt{5}
+9)/2)$;
in all cases $\Gamma$ is isomorphic to $H[2;2,3;5]$.
\item
$(\beta,\gamma,\beta')=((\sqrt{5}-5)/2,(\sqrt{5}-1)/2,(3\sqrt{5}
-1)/2)$;
$\Gamma$ is isomorphic to $\Tet[3,3;5]$.
\item
$\beta=-3$, $\gamma=2\cos(2\pi/m)$, where $m\geq 5$, $(m,4)=1$,
and
$\beta'=2\gamma$;
$\Gamma$ is isomorphic to $\Tet[4,m;3]$.
\item
$n\geq 5$, $(n,3)=1$, $\gamma=2(\beta+3)$ and
$\beta'=-6(2\cos(\pi/n)+\beta+2)/\beta$;
$\Gamma$ is isomorphic to $R[n,2;2]$.
\item
$\beta=-3$, $\gamma=2\cos(2\pi/m)$, where $m\geq 8$, $(m,4)=2$,
and
$\beta'=2\gamma$; $\Gamma$ is isomorphic to $H[m;3,3;2]$.
\item
$\beta=-3$, $\gamma=2\cos(2\pi/m)-1$, where $m\geq 4$, $(m,3)=1$,
and
$\beta'=\gamma^2+4\gamma$;
$\Gamma$ is isomorphic to $\Tet[2,3,3;2,3,m]$.
\end{enumerate}
\end{theorem}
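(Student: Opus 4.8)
The plan is to follow verbatim the strategy used for \fullref{groups_disjoint} and \fullref{groups_even}: for each row of \fullref{table_param} with $n$ odd we already possess, from \fullref{construction_intersect}, an explicit fundamental polyhedron for the extension $\Gamma^*$, and the task is to descend from $\Gamma^*$ through its orientation-preserving index-$2$ subgroup $\widetilde\Gamma=\langle f,g,e\rangle$ down to $\Gamma=\langle f,g\rangle$, extracting a presentation and an orbifold at the last stage. Since $\Gamma^*=\langle\widetilde\Gamma,R_\omega\rangle$, I would first obtain a fundamental polyhedron $\widetilde\P$ for $\widetilde\Gamma$ by doubling the fundamental polyhedron for $\Gamma^*$ across the plane $\omega$, exactly as the polyhedron for $\Gamma^*$ was doubled in the even case. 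Applying the Poincar\'e polyhedron theorem to $\widetilde\P$, with the natural face-pairing transformations assembled from $f$, $f'=R_\omega R_{\alpha'}$, the half-turns $e_g$ and $e_1$, and the reflections or half-turns attached to the auxiliary planes $\zeta$, $\zeta_1$, $\zeta_2$, $\xi$ and $\eta$, yields a presentation of $\widetilde\Gamma$. I would then rewrite every generator in terms of $f$, $g$ and the involution $e$ using $e_g=ge$ and $e_1=f^{(n-1)/2}e$.

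The decisive step is to decide, case by case, whether $\Gamma=\widetilde\Gamma$ or $\Gamma$ is an index-$2$ subgroup of $\widetilde\Gamma$, and to read off the stated presentation. As in the even case this rests on a parity argument: one shows that $e$ (equivalently, a distinguished reflection or half-turn among the face pairings) lies in $\Gamma=\langle f,g\rangle$ precisely when a relevant exponent is odd. When $e\in\Gamma$ one gets $\Gamma=\widetilde\Gamma$ at once, and the presentation simplifies directly to one of $GTet_1$, $GTet_2$, $H$, $R$ or a $\Tet$ group after a change of generators. When $e\notin\Gamma$, I would exhibit a smaller fundamental polyhedron for $\Gamma$ by pairing two copies of the relevant faces with a $\pi$--loxodromic element $L$ (the odd analogue of the element $L=e_ge_2$ that produced $\mathcal{S}_2$), and reapply Poincar\'e; the relation $f'=Lf^{-1}L^{-1}$ then eliminates a generator and delivers $\mathcal{S}_3[n,t(u)/2,t(v)]$ (item (1)) or the corresponding index-$2$ presentation in the other cases. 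Matching each presentation to the abstract groups in the list and reading off $Q(\Gamma)$ from the boundary identifications of the fundamental polyhedron for $\Gamma$ completes a single case.

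The main obstacle is not any one identity but the combination of the number of cases with the geometric complexity of those rows in which $R_\zeta\notin\Gamma^*$, namely the cases with $\ell=p/3$ (rows $P_7$, $P_8$, $P_{10}$, $P_{16}$, $P_{19}$) and the sporadic Platonic rows $P_{11}$--$P_{19}$. For these the fundamental polyhedron for $\Gamma^*$ is a tetrahedron $T$ (or a half or a quarter of one) and $\Gamma^*=\langle G_T,e_1,e_g\rangle$ or a sub-configuration thereof, so that producing a fundamental domain first for $\widetilde\Gamma$ and then for $\Gamma$ requires careful tracking of which reflections of $G_T$ survive in the orientation-preserving subgroup and which can be rewritten as words in $f$ and $g$. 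Identifying the resulting orientation-preserving group with $H[2;2,3;5]$, $H[2;2,5;3]$, $\Tet[3,3;5]$, and so on, is then a finite but delicate bookkeeping problem. I expect to carry out a few representative cases (say $P_4$ and $P_5$ for the generic one-parameter families, and one tetrahedral case such as $P_{16}$ or $P_{19}$) in full and to dispatch the remaining rows by the same argument, as was done in the earlier proofs; once the fundamental polyhedron for $\Gamma$ is in hand the orbifolds $Q(\Gamma)$ are those drawn in Figures \ref{s2s1} and \ref{rp3}.
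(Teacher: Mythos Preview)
Your plan is essentially the paper's approach, and the division of labour you anticipate (generic families $P_4$--$P_6$, $P_9$ versus tetrahedral/sporadic rows $P_7$, $P_8$, $P_{10}$--$P_{19}$) is exactly right. Two refinements are worth knowing before you start.

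First, for the generic rows $P_4$, $P_5$, $P_6$, $P_9$ the paper does \emph{not} pass through $\widetilde\Gamma$ and then decide on a parity split. Instead it takes \emph{four} copies of the $\Gamma^*$--polyhedron (doubling across $\omega$ and again across $\delta$, or the analogous pair) and applies Poincar\'e directly to obtain a group already contained in $\Gamma$: for $P_4$ the face pairings are $f$, $f'$, $L=e_ge_1$, $L'=Lf^{-1}$, and since $L=gf^{-(n-1)/2}$ one has $\langle f,L\rangle=\Gamma$ immediately, with no parity case needed. The same pattern (four copies, then identify the resulting group with $\Gamma$) handles $P_5$ and $P_6$. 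Your parity heuristic is not wrong, but it is not the organising principle here; the oddness of $n$ (via $e_1=f^{(n-1)/2}e$) and of $m$ (in $P_5$) is what lets you write the face pairings as words in $f,g$ directly.

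Second, and more substantively: for $P_6$, $P_9$, $P_{10}$, and the subcase $(r,4)=2$ of $P_{11}$, the paper cannot verify $\langle f,g\rangle$ equals the Poincar\'e group by an elementary word manipulation; it invokes the Todd--Coxeter coset enumeration algorithm to confirm index~$1$. You should expect to need this tool (or an equivalent finite index computation) in those cases, since the expression of $g$ in the new generators is complicated enough that the reverse inclusion is not transparent. Apart from these two points your outline matches the paper's proof.
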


\begin{proof}
Now we shall use fundamental polyhedra for $\Gamma^*$
described in \fullref{construction_intersect} for $n$ odd and
the Poincar\'e
theorem to find a presentation for $\Gamma$.

{\bf 1.} \qua
Let $\Gamma$ have parameters as in row $P_4$ of
\fullref{table_param}.
Consider the polyhedron bounded by
$\alpha$, $\alpha'$, $R_\omega(\alpha)$, $R_\omega(\alpha')$,
$\zeta$,
$R_\omega(\zeta)$, $e_g(\zeta)$ and $R_\omega(e_g(\zeta))$,
which is the union of four copies of $\P$ shown in \fullref{fp_odd}(a).
Its faces are identified by~$f$, $f'=e_gf^{-1}e_g$ and two loxodromic
elements
$L=e_ge_1$ and $L'=e_ge_1f^{-1}=Lf^{-1}$.
Using the Poincar\'e polyhedron theorem, one can show that
$$
\langle f,f',L,L'\rangle=
\langle
f,L|f^n,(LfL^{-1}f)^{m/2},(fL^{-1}fL^{-1}fL^2)^\ell\rangle.
$$
Obviously, $\langle f,L\rangle\cong \mathcal{S}_3[n,m/2,\ell]$.
Further, since $L=e_ge_1=gef^{(n-1)/2}e=gf^{-(n-1)/2}$,
the group $\langle \! f,L \! \rangle$ coincides with $\Gamma$.
Hence, $\Gamma$ is
isomorphic to
$\! \mathcal{S}_3[n,\!m/2,\! \ell]$, where $m$ is even ($1/n+1/m\leq 1/2$),
$m=\infty$
or $m=\overline\infty$ and $\ell\geq 2$ is an integer.
The orbifold $Q(\Gamma)$ is shown in \fullref{s2s1}(c).

{\bf 2.} \qua
Let  $\Gamma$ have parameters as in row $P_5$.
Denote $y=R_\delta R_\omega$, $\zeta_1'=R_\omega(\zeta_1)$
and consider the polyhedron
$\P(\alpha,R_\omega(\alpha),\xi,\zeta_1,\zeta_1',y(\zeta_1),y(\zeta_1'))$,
which is the union of four copies of the polyhedron $\Q$ shown in
\fullref{fp_odd}(b).
Its faces are identified by the transformations $f$, $v=ye_g$,
$u=ye_1$ and $u'=yfe_1$. As usual, we apply the Poincar\'e polyhedron theorem
to get
$$
\langle f,v,u,u'\rangle=
\langle f,v,u| f^n,v^2,(fv)^m,(fvuvu^{-1})^k,[f,u]\rangle.
$$
We see that $\langle f,v,u\rangle\cong GTet_2[n,m,k]$, where
the isomorphism is given by $f\mapsto x^{-1}$, $v\mapsto y$,
$u\mapsto z^{-1}$. 
So it remains to show
that $\langle f,v,u\rangle$ is actually generated by $f$ and $g$.

First, note that since the axis of $y$ 
is orthogonal to the axis of $f$,
$yfy=f^{-1}$. Now since $m$ is odd and $v^2=1$, we can write
\begin{align*}
1&=(fv)^m=(fvfv)^{(m-1)/2}fv=(fe_gyfye_g)^{(m-1)/2}fv\\
&=(fe_gf^{-1}e_g)^{(m-1)/2}fv=(fgfg^{-1})^{(m-1)/2}fv.
\end{align*}
Therefore, $v=(fgfg^{-1})^{(m-1)/2}f\in\Gamma$.
Further, $u=ye_1=vgef^{(n-1)/2}e=vgf^{-(n-1)/2}$ 
and hence $u\in\Gamma$. So we have shown that
$\langle f,v,u\rangle\subseteq\Gamma$. On the other hand,
$g=vuf^{(n-1)/2}$ and hence $\Gamma\subseteq\langle f,v,u\rangle$.
Then $\Gamma=\langle f,v,u\rangle\cong GTet_2[n,m,k]$, 
where $m$ is odd
($1/n+1/m<1/2$)
and $k\geq 2$ is an integer, $\infty$ or $\overline\infty$.

Now suppose that $\Gamma$ has parameters as in row $P_6$.
This case is similar to the case of parameters $P_5$, but
technically it is more complicated.

Denote $t=R_\delta R_\tau$, $y=R_\alpha R_\delta$ and
$v=yt$. Consider the polyhedron bounded by
$\omega$, $R_\alpha(\omega)$, $\tau$, $\zeta_2$, $R_\alpha(\zeta_2)$,
$y(\zeta_2)$ and $y(R_\alpha(\zeta_2))$, 
which is the union of four copies of the polyhedron
shown in \fullref{p1_hyp}(a). Its faces are identified by 
$f$, $v$, $u=ye_1$ and $u''=ye_1f$. 
Again, by the Poincar\'e polyhedron theorem, we get
$$
\langle f,v,u,u''\rangle=
\langle f,v,u| f^n,v^2,(fv)^3,(fvuvu^{-1})^k,[f,u]\rangle\cong GTet_2[n,3,k].
$$
Further,
$$
u=ye_1=(ye_g)(e_ge_1)=
(R_\alpha R_\delta)(R_\delta R_\xi)(gef^{(n-1)/2}e)=h_1^{-1}gf^{-(n-1)/2},
$$
where $h_1=R_\xi R_\alpha$.
Note that $h_1=vf^{-2}v$.
Then
$$
g=h_1uf^{(n-1)/2}=vf^{-2}vuf^{(n-1)/2}.
$$
Hence,
$\Gamma$ is a subgroup of $\langle f,v,u\rangle$. Now
one can apply the Todd--Coxeter algorithm, 
see eg Johnson \cite{Joh90}
to $\langle f,v,u\rangle$ and
its subgroup $\Gamma$ generated by $f$ and $g=vf^{-2}vuf^{(n-1)/2}$
to show that $|\langle f,v,u\rangle:\Gamma|=1$, ie,
$\langle f,v,u\rangle=\Gamma$.

The orbifold with the fundamental group $GTet_2[n,m,k]$
is shown in \fullref{s2s1}(b).

{\bf 3.} \qua
If $\Gamma$ has parameters as in row $P_9$, we consider
the polyhedron bounded by $\omega$, $\xi$, $R_\alpha(\omega)$,
$R_\alpha(\xi)$, $\mu$ and $R_\delta(\mu)$
(compare with \fullref{p1_hyp}(b))
whose faces are identified by $f$, $h_1=R_\xi R_\alpha$ and
$z=R_\delta R_\mu$.
Then $\langle f,h_1,z\rangle$ has the presentation
\begin{equation}\label{pres}
\langle f,h_1,z|f^3,h_1^m,(f^{-1}h_1)^2,[f,z]^2,[h_1,z]\rangle
\end{equation}
Hence, $\langle f,h_1,z\rangle\cong GTet_1[m,3,2]$, where the isomorphism
is given by $f\mapsto xy$, $h_1\mapsto x$, $z\mapsto z$.
Let us show that $\langle f,h_1,z\rangle=\Gamma$.

Denote $a=R_\mu R_\alpha$, $b=R_\delta R_\alpha$ and
$s=R_\alpha R_{\zeta_1}$. Then $z=ba$.
Since the axis of $b$ is orthogonal to $f$, we have
$bfb=f^{-1}$ and, since $\mu$ is orthogonal to $\alpha$, we have
that $a^2=1$.
From the decomposition of the link made by
$\alpha$, $\omega$ and $\zeta_1$ by the reflection planes, we obtain
$$
e_1=sas^{-1} \text{ and } s=afa.
$$
Therefore, $e=f^{-1}e_1=f^{-1}sas^{-1}=f^{-1}afaf^{-1}a$ and
$$
g=e_ge=h_1bf^{-1}afaf^{-1}a=h_1fzfz^{-1}fz.
$$
So we have shown that $\Gamma$ is a subgroup of 
$\langle f,h_1,z\rangle$.
Now it is sufficient to apply the Todd--Coxeter algorithm 
to the group $\langle f,h_1,z\rangle$ given by presentation
\eqref{pres} and its subgroup
$\langle f,g\rangle$
to see that
$\langle f,h_1,z\rangle=\Gamma$.

Thus $\Gamma\cong GTet_1[m,3,2]$
and the orbifold $Q(\Gamma)$ is shown in \fullref{s2s1}(d).

{\bf 4.} \qua
Consider the groups with parameters as in rows
$P_{11}$--$P_{15}$, $P_{17}$, $P_{18}$. In all of these cases $R_\zeta\in\Gamma^*$.
We know a fundamental polyhedron for $\Gamma^*$ and the structure of $\Gamma^*$
in each case. Since all these polyhedra are obtained as decompositions
of $\P$ into smaller polyhedra, they have common properties. Namely,
\begin{itemize}
\item[(P1)]
the elements $f'=R_{\alpha'}R_\omega$ and $h=R_{\alpha'}R_\alpha$
belong to $\Gamma$. Indeed,
\begin{align*}
f'& =R_{\alpha'}R_\omega=e_g R_\alpha e_g R_\omega  =(e_g R_\alpha R_\omega)
(R_\omega e_g R_\omega)\\
& =e_gf^{-1}e_g=gfg^{-1}
\end{align*}
and $h=f'f=gfg^{-1}f$.
\item[(P2)]
the elements $h_2=R_{\alpha'}R_\zeta$, $t_1=R_\alpha R_\zeta$ and
$t_2=R_\omega R_\zeta$ belong to $\Gamma$.

Denote $\alpha''=e_1(\alpha')$.
Note that $R_{\alpha''}=e_1 R_{\alpha'}e_1$
and $R_\omega=e_1 R_\alpha e_1$. Then
\begin{align*}
h_2^2&=R_{\alpha'}R_{\alpha''}=(R_{\alpha'}R_\alpha)(R_\alpha R_{\alpha''})=
hR_\alpha e_1 R_{\alpha'}e_1=he_1R_\omega R_{\alpha'} e_1\\
&=he_1gf^{-1}g^{-1}e_1
=gfg^{-1}f^{-(n-1)/2}g^{-1}fgf^{-(n-1)/2}.
\end{align*}
Since $h_2$ always has odd order for the groups with parameters
$P_{11}$--$P_{15}$, $P_{17}$, $P_{18}$, the fact that $h_2^2\in\Gamma$ implies 
$h_2\in\Gamma$.
Further, since $t_1=(R_\alpha R_{\alpha'})(R_{\alpha'} R_\zeta)=h^{-1}h_2$
and 
$t_2=R_\omega R_\zeta=e_1 R_\alpha e_1 R_\zeta=e_1 R_\alpha R_\zeta e_1=e_1t_1e_1$,
both $t_1$ and $t_2$ belong to~$\Gamma$.
\end{itemize}

For the groups with parameters $P_{12}$, $P_{13}$, $P_{15}$, $P_{17}$ or $P_{18}$,
$\Gamma^*=\langle G_T,e_g\rangle$, where $e_g$ coincides with the axis of a
$\mathbb Z_2$--symmetry of $T$ (see Figures \ref{gamma9-16}(b), \ref{gamma_central}(a),
\ref{gamma_central}(c)--(e)).
Then $\widetilde\Gamma=\langle \Delta_T,e_g\rangle$, where $\Delta_T$ is the
orientation preserving subgroup of~$G_T$.
Proceeding as in the proof of the property (P2), one can show that
the rotations from $\Delta_T$ belong to $\Gamma$.
In particular, since $e_1$ passes through an edge of $T$,
$e_1\in\Gamma$ and, therefore, $e\in\Gamma$.
Thus, $\Gamma=\widetilde\Gamma$. If $T$ is a compact tetrahedron,
it was shown by Derevnin and Mednykh \cite{DM98} that each
$\langle \Delta_T,e_g\rangle$ is isomorphic to some $H[p;n,m;q]$.
It is easy to see that the same is true for non-compact~$T$.
It remains to find $p$, $n$, $m$ and $q$, which is not difficult to
do since the position of $e_g$ is known in each case.
For example, if $\Gamma$ has parameters $P_{12}$,
$\Gamma\cong H[2;2,3;5]$.

Now consider the groups with parameters as in row $P_{14}$
(see \fullref{gamma_central}(b)).
In this case
$T=T[2,2,4;2,3,5]$.
Denote by $\rho$ the reflection plane through $e_g$ and $\alpha'\cap\zeta$
and let $\overline h_2=R_\rho R_\zeta$. Then  $\overline h_2^2=h_2$.

It is clear that $\Delta_T$ is generated by $e_1$, $t_1$ and $\overline h_2$
and has the presentation
$$
\langle  e_1, t_1, \overline h_2| e_1^2,t_1^3, \overline h_2^5,
(e_1 t_1)^4, (e_1 \overline h_2)^2, (t_1^{-1}\overline h_2)^2\rangle
\cong \Tet[4,5;3].
$$
Let us show that $\Delta_T=\Gamma$. From the link of the vertex made by
$\alpha$, $\omega$ and $\zeta$, we see that
$f=t_2t_1^{-1}=e_1t_1e_1t_1^{-1}$. Since $e_g=\overline h_2t_1$,
$g=e_ge=\overline h_2 t_1 f^{-1} e_1=\overline h_2t_1^2e_1 t_1^{-1}$.
Therefore, $\langle f,g\rangle$ is a subgroup of $\Delta_T$. Furthermore,
since $\overline h_2$ is of odd order, the Todd--Coxeter method gives us that
$\langle f,g\rangle$ coincides with~$\Delta_T$. Thus $\Gamma\cong \Tet[4,5;3]$.

We remark that the case of the groups with parameters as in row $P_{11}$ with
$(r,4)=1$ is analogous to the case of the groups with parameters as in row
$P_{14}$ with the difference that $h$ is hyperbolic and
$T=T[2,2,4;2,3,r]$
is an infinite volume tetrahedron. The group $\Gamma$ is then
isomorphic to $\Tet[4,r;3]$, where $r\geq 7$ is odd.

Consider the groups with parameters $P_{11}$ with $(r,4)=2$.
The consideration is quite delicate so we shall do it in detail.

Let $\kappa$ be the reflection plane passing through $e_1$ orthogonally
to $\zeta$ (see \fullref{gamma9-16}(a)), let $\tau$ be the plane through
$e_g$ and $t_1$, and let $\rho$ again be the plane through $e_g$
and $\alpha'\cup\zeta$.
Denote $s=R_\rho R_\kappa$, $s'=R_\tau s R_\tau$,
$u=R_\tau R_{\alpha'}$ and consider
$\P(\alpha,\zeta,\kappa,\alpha',R_\tau(\alpha'),R_\tau(\kappa))$.
Its faces are identified by $s$, $s'$, $t_1$ and $u$. Then by the Poincar\'e
polyhedron theorem we get the presentation
$$
G=\langle s,s',t_1,u| s^2,(s')^2,t_1^3,u^3,(t_1u)^{r/2},
ust_1s',(ss')^2\rangle.
$$
Since $s'=ust_1$,
$$
G=\langle s,t_1,u| s^2,t_1^3,u^3,(t_1u)^{r/2},
(sust_1)^2,(ust_1)^2\rangle\cong H[r/2;3,3;2],
$$
where $r/2\geq 5$ and $r/2$ is odd.
We claim that $G=\Gamma$.

Note that $R_\omega=R_\kappa R_\tau R_\kappa$ and
$R_\alpha=R_\tau R_\zeta R_\tau$. Therefore,
\begin{align*}
f&=R_\omega R_\alpha=R_\kappa R_\tau R_\kappa R_\tau R_\zeta R_\tau=
((R_\kappa R_\rho)(R_\rho R_\tau))^2(R_\zeta R_\tau)\\
&=(se_g)^2t_1=ss't_1,
\end{align*}
because $e_g s e_g=s'$. 
Hence, $f=sust_1^2$.
Denote as before $\overline h_2=R_\rho R_\zeta$ and 
$h_2=\overline h_2^2$.
Since $e_1=\overline h_2^{-1}s$, we get
$e=e_1f=\overline h_2^{-1}ust_1^2$.
Therefore, since $\overline h_2^2=h_2=u^{-1}t_1^{-1}$
and $e_g=t_1^{-1}\overline h_2^{-1}$, we obtain that
$$
g=e_ge=t_1^{-1}\overline h_2^{-2}ust_1^2=u^2st_1^2.
$$
So we have proved that $\Gamma$ is a subgroup of $G$.

On the other hand, since $h_2\in\Gamma$ and $t_1\in\Gamma$
(see the property (P2)),
we get that $u=t_1^{-1}h_2^{-1}\in\Gamma$ and, therefore,
$s=u^{-2}gt_1^{-2}=ugt_1\in\Gamma$.
Thus, $\Gamma=G$.

{\bf 5.} \qua
Consider the remaining cases of groups, with parameters as in rows
$P_7$, $P_8$, $P_{10}$, $P_{16}$ and $P_{19}$.
In all of these cases $R_\zeta\notin\Gamma^*$.
As in part 4 of the proof, the elements $h=R_{\alpha'}R_\alpha$ and
$h_3=R_{\alpha'}R_{\alpha''}$ belong to $\Gamma$.
Denote $\sigma=e_1(\eta)$, $a=R_\eta R_\alpha$ and $b=R_\alpha R_\sigma$.

Suppose $\Gamma$ has parameters as in row $P_7$.
Consider the tetrahedron $T=T[2,3,n;2,3,n]$ bounded by
$\alpha$, $\omega$, $\eta$ and $\sigma$.
Denote $s=R_\sigma e_g R_\sigma$ and $t=e_1 s$.
Then $t$ passes through the ``midpoints'' of the edges with
dihedral angles of $\pi/2$ and all $e_1$, $s$ and $t$ are the axes of
${\mathbb Z}_2$--symmetries of $T$.

It is clear that the group $\Delta_T$, which is the orientation
preserving subgroup of $G_T$, is generated by $f$, $a$ and $b$.
Let $H=\langle \Delta_T,t\rangle$. We leave a proof of the fact that
$H\cong H[2;3,n;2]$ as an exercise for the reader, but
we prove that $H=\Gamma$.
Let $\sigma'=e_g(\sigma)$. Then $R_{\sigma'}=e_g R_\sigma e_g$
and $R_\sigma R_{\sigma'}=R_\omega R_\sigma$.
Therefore,
$$
e_1 e_g=(e_1 s)(se_g)=tsR_\sigma R_\sigma e_g=tR_\sigma e_g R_\sigma e_g
=tR_\sigma R_{\sigma'}=tR_\omega R_\sigma=tfb.
$$
Hence, $g=e_ge=e_ge_1f^{(n-1)/2}=(tfb)^{-1}f^{(n-1)/2}\in H$.
So $\Gamma\subseteq H$.

On the other hand, $e_1e_g=f^{(n-1)/2}g^{-1}$.
Denote $\overline h_3=R_\sigma R_\eta=R_{\alpha'}R_\sigma$.
We know that
$\overline h_3^3=h_3\in \Gamma$. Since $(n,3)=1$, $\overline h_3\in\Gamma$.
Then $b=(R_\alpha R_{\alpha'})(R_{\alpha'} R_\sigma)=h^{-1}\overline h_3\in\Gamma$,
$a=\overline h_3^{-1} b^{-1}\in\Gamma$ and $t=f^{(n-1)/2}g^{-1}b^{-1}f^{-1}\in\Gamma$.
Thus, $H=\langle f,a,b,t\rangle$ is a subgroup of $\Gamma$. So
$\Gamma=H\cong H[2;3,n;2]$.

Suppose $\Gamma$ has parameters as in row $P_8$.
Let now $\kappa$ be the reflection plane such that $e_g=R_\kappa R_\sigma$
and let $s=R_\kappa e_1 R_\kappa$.
Let $\rho$ be the plane through $M$, $K$ and $L$, and 
let $\tau=R_\kappa(\rho)$
(see \fullref{gamma_noncentral}(b)).
Then $\tau$ passes through $M$, $K$ and $L'$,
where $L'=R_\kappa(L)$, and $s$ lies in $\rho$.
Moreover, the sum of the angles that $\rho$ makes 
with $\alpha$ and $\sigma$
equals $\pi$ and $\rho$ intersects $\tau$ orthogonally.
Consider the polyhedron $\P=\P(\alpha,\eta,\alpha',R_\sigma(\alpha),\rho,\tau,
R_\sigma(\rho),R_\sigma(\tau))$. 
Its faces are identified by
$\overline h_3$, $z=e_g \overline h_3e_g$, $u=se_g$ and $v=e_1e_g$.
Using the Poincar\'e polyhedron theorem, we get the presentation
$$
H=\langle z,\overline h_3, u,v|z^n,\overline h_3^n,
(\overline h_3 z)^2,vu\overline h_3,uvz^{-1},(uv^{-1})^2\rangle.
$$
Since $z=uv$ and $\overline h_3=u^{-1}v^{-1}$,
$$
H=\langle u,v|(uv)^n,(uv^{-1})^2,[u,v]^2\rangle\cong R[n,2;2].
$$
Let us show that $H=\Gamma$. First, note that since $(n,3)=1$,
$\overline h_3\in\Gamma$. Therefore, since
$v=e_1e_g=f^{(n-1)/2}g^{-1}\in \Gamma$ and $u=v^{-1}\overline h_3\in\Gamma$,
$H\subseteq\Gamma$.

In order to express $f$ and $g$ in terms of $u$ and $v$, we recall that
$R_\sigma=e_1 R_\eta e_1$ and $R_\omega=e_1 R_\alpha e_1$ and note that
$z=R_\kappa \overline h_3^{-1} R_\kappa=R_\alpha R_\sigma$.
Then
$$
s\overline h_3s=R_\kappa e_1R_\kappa\overline h_3 R_\kappa e_1 R_\kappa
=R_\kappa e_1z^{-1}e_1 R_\kappa=R_\kappa e_1 R_\sigma R_\alpha e_1
R_\kappa=R_\kappa R_\eta R_\omega R_\kappa.
$$
Furthermore, $R_\alpha=R_\kappa R_\eta R_\kappa$ and, since $\kappa$
is orthogonal to $\omega$, $R_\omega R_\kappa=R_\kappa R_\omega$.
Hence,
$s\overline h_3s=R_\alpha R_\kappa R_\omega R_\kappa=R_\alpha R_\omega
=f^{-1}$.
On the other hand, $s\overline h_3s=se_gze_gs=u^2vu^{-1}$.
Thus, $f=uv^{-1}u^{-2}$ and $g=v^{-1}f^{(n-1)/2}=v^{-1}(uv^{-1}u^{-2})^{(n-1)/2}$.
So we have shown that $\Gamma\subseteq H$ and, hence, $\Gamma=H$.

Gluing the faces of $\P$ by $\overline h_3$, $z$, $u$ and $v$, we obtain
the orbifold embedded in ${\mathbb R}P^3$  (see \fullref{rp3}).

\begin{figure}[ht!]
\centering
\labellist
\small
\pinlabel {$n$} at 49 49
\pinlabel {$2$} at 24 40
\pinlabel {$n$} at 48 10
\pinlabel {\turnbox{20}{$\pi/2$}} at 71 24
\endlabellist
\includegraphics[width=5 cm]{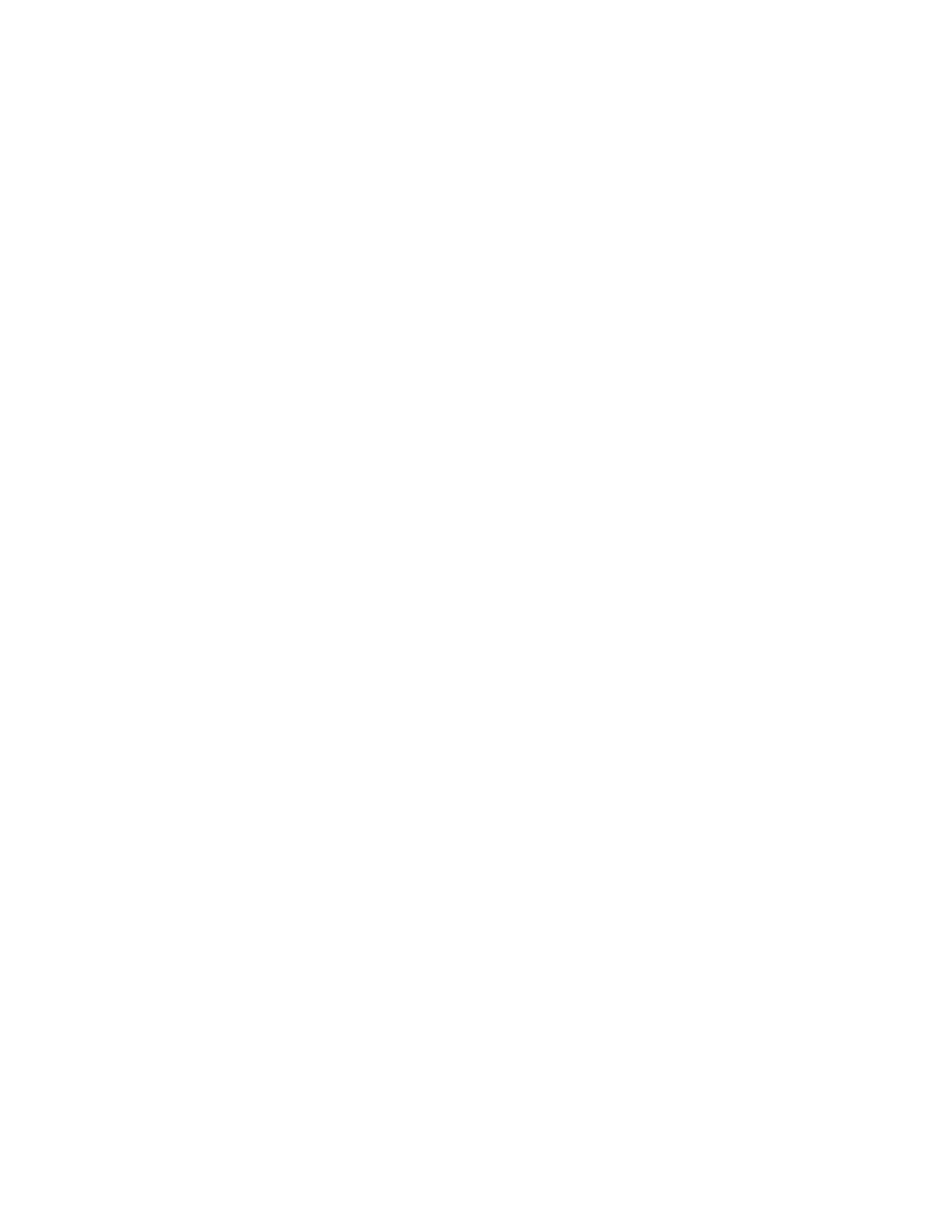}\\
$\pi_1^{\orb}(Q)\cong R[n,2;2]$
\caption{Underlying space is $\mathbb{R}P^3\backslash{\mathbb
B}^3$}\label{rp3}
\end{figure}

Suppose that $\Gamma$ has parameters as in row $P_{10}$.
Consider the tetrahedron $T=T[2,3,3;2,3,m/2]$ bounded by $\alpha$,
$\omega$, $\eta$ and~$\sigma$. The group $\Delta_T$ has the presentation
$$
\Delta_T=\langle f,a,b| f^3,a^2,b^3,(af)^3,(bf)^2,(ab)^{m/2}\rangle
\cong \Tet[2,3,3;2,3,m/2].
$$
We shall show that $\Delta_T=\Gamma$. Note that $e_g=b^{-1}e_1b$.
Then
$$
e_1be_1=e_1R_\alpha R_\sigma e_1=e_1 R_\alpha e_1 R_\eta=R_\omega R_\eta
=fa.
$$
Therefore,
$$
g=e_ge_1f=b^{-1}e_1be_1f=b^{-1}faf.
$$
Hence, $\Gamma=\langle f,g \rangle$ is a subgroup of $\Delta_T$. Applying
the Todd--Coxeter algorithm, we see that, since $(m/2,3)=1$,
$\langle f,g\rangle=\Delta_T$.

Similarly, one can show that in the case of the parameters of type
$P_{16}$, $\Gamma=\Delta_T$, where $T=T[2,2,3;2,5,3]$ and
$\Delta_T\cong \Tet[3,3;5]$.

Suppose that $\Gamma$ has parameters as in row $P_{19}$.
In this case $\Gamma^*=\langle G_T,e_g\rangle$, where $T=T[2,2,3;2,5,3]$.
Then $\widetilde \Gamma=\langle \Delta_T,e_g\rangle$.
Notice that all rotations from $\Delta_T$ belong to $\Gamma$, in
particular, $e_1\in\Gamma$. Hence, $e\in\Gamma$ and, therefore,
$\Gamma=\widetilde\Gamma$. 
It was shown by Derevnin and Mednykh \cite{DM98} that
$\langle \Delta_T, e_g\rangle\cong H[2;2,3;5]$. Thus,
$\Gamma\cong H[2;2,3;5]$.
\end{proof}

\begin{rem}
When $Q=Q(\mathcal{S}_3[n,m,q])$, the singular set of $Q$ is placed
into
$\mathcal{S}(3)$ in such a way that the curve consisting of cone
points
of indices $n$ and $m$ lies on a regular fiber.
When $Q=Q(GTet_2[n,m,q])$, the curve consisting of cone points of
indices
$m$ and $2$ lies on a regular fiber and the singular component
of
index $n$ lies on a critical fiber.

In \fullref{rp3}, $\mathbb{R}P^3$ is shown as a lens with
antipodal points on the boundary identified. The angle at 
the edge of the lens is $\pi/2$ and, therefore, the edge is mapped
onto a singular loop with index 2.
\end{rem}
\bibliographystyle{gtart}
\bibliography{link}

\begin{thebibliography}{}
\providecommand\bibmarginpar{\leavevmode\marginpar}
\def\urlstyle#1{{\tt #1}}

\bibitem{BMP03}
\textbf{M Boileau}, \textbf{S Maillot}, \textbf{J Porti},
  \emph{Three-dimensional orbifolds and their geometric structures}, Panoramas
  et Synth\`eses 15, Soci\'et\'e Math\'ematique de France, Paris (2003)
  \xox{MR}{2060653}

\bibitem{BP00}
\textbf{M Boileau}, \textbf{J Porti}, \emph{Geometrization of 3--orbifolds of
  cyclic type}, Ast\'erisque  (2001) 208 \xox{MR}{1844891}\;Appendix A: Limit
  of hyperbolicity for spherical $3$--orbifolds by M Heusener and J Porti

\bibitem{DM98}
\textbf{D\,A Derevnin}, \textbf{A\,D Mednykh}, \emph{Discrete extensions of the
  {L}anner groups}, Dokl. Akad. Nauk 361 (1998) 439--442 \xox{MR}{1693083}\;
  Translation in: Dokl. Math. 58 (1998), no. 1, 78--80

\bibitem{EP94}
\textbf{D\,A Epstein}, \textbf{C Petronio}, \emph{An exposition of
  {P}oincar\'e's polyhedron theorem}, Enseign. Math. $(2)$ 40 (1994) 113--170
  \xox{MR}{1279064}

\bibitem{GM89}
\textbf{F\,W Gehring}, \textbf{G\,J Martin}, \emph{Stability and extremality in
  {J}\o rgensen's inequality}, Complex Variables Theory Appl. 12 (1989)
  277--282 \xox{MR}{1040927}

\bibitem{GM94-1}
\textbf{F\,W Gehring}, \textbf{G\,J Martin}, \emph{Chebyshev polynomials and
  discrete groups}, from: ``Proceedings of the Conference on Complex Analysis
  (Tianjin, 1992)'', Conf. Proc. Lecture Notes Anal., I, Int. Press, Cambridge,
  MA (1994)  114--125 \xox{MR}{1343502}

\bibitem{HR93}
\textbf{C\,D Hodgson}, \textbf{I Rivin},
  \href{http://dx.doi.org/10.1007/BF01231281} {\emph{A characterization of
  compact convex polyhedra in hyperbolic {$3$}-space}}, Invent. Math. 111
  (1993) 77--111 \xox{MR}{1193599}

\bibitem{Joh90}
\textbf{D\,L Johnson}, \emph{Presentations of groups}, London Mathematical
  Society Student Texts 15, Cambridge University Press, Cambridge (1990)
  \xox{MR}{1056695}

\bibitem{Kli89}
\textbf{E Klimenko}, \href{http://dx.doi.org/10.1007/BF01054221}
  {\emph{Discrete groups in the three-dimensional {L}obachevsky space that are
  generated by two rotations}}, Sibirsk. Mat. Zh. 30 (1989) 95--100
  \xox{MR}{995026}

\bibitem{Kli90}
\textbf{E Klimenko}, \emph{Some remarks on subgroups of {${\rm PSL}(2,{\bf
  C})$}}, Questions Answers Gen. Topology 8 (1990) 371--381 \xox{MR}{1065285}

\bibitem{Kli01}
\textbf{E Klimenko}, \emph{Some examples of discrete groups and hyperbolic
  orbifolds of infinite volume}, J. Lie Theory 11 (2001) 491--503
  \xox{MR}{1851804}

\bibitem{KK02}
\textbf{E Klimenko}, \textbf{N Kopteva}, \emph{Discreteness criteria for
  {$\mathcal{RP}$} groups}, Israel J. Math. 128 (2002) 247--265
  \xox{MR}{1910384}

\bibitem{KK04}
\textbf{E Klimenko}, \textbf{N Kopteva},
  \href{http://dx.doi.org/10.1142/S0218196705002438} {\emph{All discrete
  {${\mathcal{RP}}$} groups whose generators have real traces}}, Internat. J.
  Algebra Comput. 15 (2005) 577--618 \xox{MR}{2151429}

\bibitem{KK04-1}
\textbf{E\,Y Klimenko}, \textbf{N\,V Kopteva},
  \href{http://dx.doi.org/10.1007/s11202-005-0101-3} {\emph{Discrete
  {$\mathcal{RP}$}--groups with a parabolic generator}}, Sibirsk. Mat. Zh. 46
  (2005) 1069--1076 \xox{MR}{2195032}

\bibitem{KK05-2}
\textbf{E Klimenko}, \textbf{N Kopteva}, \emph{Two-generator Kleinian
  orbifolds} (2006) \xox{arXiv}{math.GT/0606066}

\bibitem{Kna68}
\textbf{A\,W Knapp},
  \href{http://projecteuclid.org/getRecord?id=euclid.mmj/1029000032}
  {\emph{Doubly generated {F}uchsian groups}}, Michigan Math. J. 15 (1969)
  289--304 \xox{MR}{0248231}

\end{thebibliography}

\end{document}